\documentclass{mhkr}

\usepackage[style = numeric, sorting = nyt]{biblatex}
\addbibresource{main.bib}

\title{On a cofinal Reinhardt embedding without Powerset}
\author{Hanul Jeon}
\email{ \href{mailto:hj344@cornell.edu}{hj344@cornell.edu}}
\urladdr{ \href{https://hanuljeon95.github.io}{https://hanuljeon95.github.io} }
\address{Department of Mathematics, Cornell University, Ithaca, NY 14853} 
\thanks{The author would like to thank Asaf Karagila, who invited the author to the CHEESE\footnote{An acronym of \emph{CHoiceless Elementary Embeddings of SEts}} workshop from which the initial idea of this paper emerged. The author is also thankful to Yair Hayut for allowing the inclusion of the proof of \autoref{Proposition: Hayut's theorem for non-existence of a cofinal embedding}, and the reviewer for a meticulous and helpful review.}

\newcommand{\lag}{\langle}
\newcommand{\rag}{\rangle}

\newcommand{\calP}{\mathcal{P}}
\newcommand{\frakt}{\mathfrak{t}}

\newcommand{\sfQ}{\mathsf{Q}}

\newcommand{\crit}{\operatorname{crit}}
\newcommand{\Dom}{\operatorname{Dom}}
\newcommand{\Ran}{\operatorname{Ran}}
\newcommand{\trcl}{\operatorname{trcl}}
\newcommand{\rank}{\operatorname{rank}}

\newcommand{\Tcoll}{\operatorname{Tcoll}}
\newcommand{\Fin}{\operatorname{Fin}}
\newcommand{\suc}{\operatorname{suc}}
\newcommand{\lh}{\operatorname{lh}}

\newcommand{\ZFC}{\mathsf{ZFC}}
\newcommand{\ZF}{\mathsf{ZF}}
\newcommand{\DC}{\mathsf{DC}}

\begin{document}
\maketitle

\begin{abstract}
    In this paper, we provide a positive answer to the question of Matthews whether $\ZF^-$ is consistent with a non-trivial cofinal Reinhardt elementary embedding $j\colon V\to V$. The consistency follows from $\ZFC + I_0$, and more precisely, it is witnessed by Schlutzenberg's model of $\ZF$ with an elementary embedding $k\colon V_{\lambda+2}\to V_{\lambda+2}$.
\end{abstract}

\section{Introduction}

Set theorists developed large cardinal axioms as a gauge for the consistency strength of theories. 
Large cardinals stronger than measurable cardinals are usually described as critical points of elementary embeddings $j\colon V\to M$; We get a stronger large cardinal notion by demanding that $M$ is closer to $V$.
The previous description provided by Reinhardt \cite[\S 6.4]{Reinhardt1974RemarkReflection} has the natural culmination by equating $M=V$, which is now known as a \emph{Reinhardt embedding}:
\begin{definition}
    A \emph{Reinhardt embedding} is a non-trivial elementary embedding $j\colon V\to V$. The critical point of a Reinhardt embedding is called a \emph{Reinhardt cardinal}.
\end{definition}
The reader should be warned that $j$ is a proper class: The most typical way of understanding a proper class in first-order set theory like $\ZF$ or $\ZFC$ is a collection of sets satisfying a first-order formula with parameters in the language $\{\in\}$ of set theory.
However, Suzuki \cite{Suzuki1999} showed in $\ZF$ that no Reinhardt embedding is first-order definable, meaning that we cannot use the previous way to describe a Reinhardt embedding.
Hence, when we talk about a Reinhardt embedding, we expand the language of set theory by adding a unary function symbol $j$, and adding the axioms describing the elementarity of $j$ in the language of $\{\in\}$ with the axiom schemas of Separation and Replacement in the expanded language. (See \autoref{Subsection: Set theory} for more details.)

Unfortunately, Reinhardt's ultimate large cardinal axiom was doomed to an Icarian fate: Kunen proved that the existence of a Reinhardt embedding is inconsistent with $\ZFC$.
What Kunen proved is stronger than the mere inconsistency of a Reinhardt embedding.
\begin{definition}
    A cardinal $\lambda$ is a \emph{Kunen cardinal} if there is an elementary embedding $j\colon V_{\lambda+2}\to V_{\lambda+2}$.
\end{definition}

We can rephrase Kunen's result as follows:
\begin{theorem}[Kunen 1969, $\ZFC$]
    A Kunen cardinal does not exist. 
\end{theorem}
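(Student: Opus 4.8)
The plan is to reduce the non-existence of a Reinhardt cardinal to the non-existence of a Kunen cardinal, and then to refute the latter directly by a combinatorial argument built on $\omega$-J\'onsson functions. For the reduction, suppose $j\colon V\to V$ were a Reinhardt embedding with $\kappa=\crit(j)$, and set $\lambda=\sup_n j^n(\kappa)$. Then $j(\lambda)=\lambda$, and since $\rank(j(x))=j(\rank(x))\le j(\lambda+1)=\lambda+1$ for every $x\in V_{\lambda+2}$, the map $j$ restricts to an elementary embedding of $V_{\lambda+2}$ into itself (elementarity of the restriction follows from that of $j$ together with the fact that $V_{\lambda+2}$ is uniformly definable from the fixed point $\lambda$). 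Hence $\lambda$ is a Kunen cardinal, so it suffices to derive a contradiction from the mere existence of an elementary $j\colon V_{\lambda+2}\to V_{\lambda+2}$.

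So fix such a $j$ with $\kappa=\crit(j)$, and first record the basic structural facts. Each finite ordinal and $\omega$ are fixed by $j$, so $\kappa>\omega$; and since the only ordinals in $V_{\lambda+2}$ are those $\le\lambda+1$, a short computation (using that $j$ is $\ge$ the identity and monotone on ordinals) forces $j(\lambda)=\lambda$ and $\kappa<\lambda$. Passing from $\lambda$ to $\sup_n j^n(\kappa)$ if necessary---this ordinal is again fixed by $j$ and carries the restricted embedding---I may assume $\lambda=\sup_n j^n(\kappa)$, so that $\lambda$ is a limit cardinal of cofinality $\omega$. Two facts will drive the contradiction: (i) $j''\lambda$ is a subset of $\lambda$ of cardinality $\lambda$ with $\kappa\notin j''\lambda$ (for $\xi<\kappa$ we have $j(\xi)=\xi<\kappa$, and for $\xi\ge\kappa$ we have $j(\xi)\ge j(\kappa)>\kappa$); and (ii) $j$ is continuous at countable sets, i.e.\ $j(t)=j''t$ for every countable $t\subseteq\lambda$, because such a $t$ is the range of some $g\colon\omega\to\lambda$ and $j(g)(n)=j(g(n))$ for all $n<\omega$ by $\kappa>\omega$.

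The combinatorial engine is the Erd\H{o}s--Hajnal theorem: in $\ZFC$ there is an $\omega$-J\'onsson function $f\colon[\lambda]^\omega\to\lambda$, meaning $f''[A]^\omega=\lambda$ for every $A\subseteq\lambda$ with $|A|=\lambda$. To feed this to $j$, I must place $f$ inside $V_{\lambda+2}$, which requires care since the naive graph of $f$ has rank above $\lambda+2$; instead I would fix a pairing that encodes each $(t,\beta)\in[\lambda]^\omega\times\lambda$ as a single countable subset of $\lambda$, and code $f$ by the set $G\subseteq[\lambda]^\omega\subseteq V_{\lambda+1}$ of these codes, so that $G\in V_{\lambda+2}$. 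Since ``$G$ codes an $\omega$-J\'onsson function on $[\lambda]^\omega$'' is first-order over $V_{\lambda+2}$ and $j(\lambda)=\lambda$, elementarity gives that $j(G)$ codes an $\omega$-J\'onsson function $j(f)\colon[\lambda]^\omega\to\lambda$.

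Finally I would apply the $\omega$-J\'onsson property of $j(f)$ to the set $A=j''\lambda\in V_{\lambda+2}$ of size $\lambda$ and to the value $\beta=\kappa$: there must be $s\in[j''\lambda]^\omega$ with $j(f)(s)=\kappa$. Writing $s=j''t$ for the countable set $t$ of those $\xi<\lambda$ with $j(\xi)\in s$, and using continuity (ii), we get $s=j(t)$, so by elementarity $j(f)(s)=j(f)(j(t))=j(f(t))\in j''\lambda$. This contradicts $\kappa\notin j''\lambda$ from (i), and the theorem follows. The main obstacle I anticipate is the bookkeeping of the third step---arranging for an $\omega$-J\'onsson function to live in $V_{\lambda+2}$ so that $j$ can be applied to it---together with checking that the $\omega$-J\'onsson property is genuinely internal to $V_{\lambda+2}$; the continuity at $\omega$ and the Erd\H{o}s--Hajnal theorem are the other essential but more standard ingredients.
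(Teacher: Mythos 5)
The paper states this theorem as a cited classical result (Kunen 1969) and gives no proof of its own, so there is nothing internal to compare against. Your argument is a correct reconstruction of Kunen's original proof: the reduction from a Reinhardt embedding to an elementary $j\colon V_{\lambda+2}\to V_{\lambda+2}$ at $\lambda=\sup_n j^n(\crit j)$, the Erd\H{o}s--Hajnal $\omega$-J\'onsson function applied to $A=j''\lambda$ with target $\kappa\notin j''\lambda$, and continuity of $j$ on countable subsets of $\lambda$ are exactly the standard ingredients, and your flat-style coding of the J\'onsson function into $V_{\lambda+2}$ correctly addresses the one genuine bookkeeping issue (the same rank-inflation problem the paper's flat pairing machinery is designed to handle). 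The only point you gloss is that an abstract nontrivial elementary $j\colon V_{\lambda+2}\to V_{\lambda+2}$ must move an ordinal, so that $\crit(j)$ exists; this is the standard minimal-rank argument and is routine.
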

Kunen's theorem implies $\ZFC$ proves there is no Reinhardt cardinal: Suppose $j\colon V\to V$ is a Reinhardt embedding. If we let $\lambda = \sup_{n<\omega} j^n(\crit j)$, then $j\restriction V_{\lambda+2}$ witnesses $\lambda$ is a Kunen cardinal.

We can ask if the concept of a Reinhardt cardinal can be weakened to something that can consistently exist.  The following hypotheses provide possible weakenings in this spirit.
\begin{definition}
    Let us define the following assertions about a cardinal $\lambda$:
    \begin{itemize}
        \item ($I_3(\lambda)$) There is a non-trivial elementary embedding $j\colon V_\lambda \to V_\lambda$.
        \item ($I_2(\lambda)$) There is a non-trivial $\Sigma_1$-elementary embedding $j\colon V_{\lambda+1} \to V_{\lambda+1}$.
        \item ($I_1(\lambda)$) There is a non-trivial full elementary embedding $j\colon V_{\lambda+1} \to V_{\lambda+1}$.
        \item ($I_0(\lambda)$) There is a elementary embedding $j\colon L(V_{\lambda+1}) \to L(V_{\lambda+1})$.
    \end{itemize}
    For $0\le n\le 3$, $I_n$ is the assertion `For some $\lambda$, $I_n(\lambda)$ holds.'
\end{definition}
Solovay-Reinhardt-Kanamori \cite[\S 7]{SolovayReinhardtKanamori1978} defined the first three with the comment ``It seems likely that $I_1$, $I_2$ and $I_3$ are all inconsistent since they appear to differ
from the proposition proved inconsistent by Kunen only in an inessential technical
way,'' \cite[p. 109]{SolovayReinhardtKanamori1978}. However, as Kanamori \cite{Kanamori2008} stated ``These propositions have thus far defied all attempts at refutation in $\ZFC$'' \cite[\S 24]{Kanamori2008}, no known inconsistency proof of the first three is known.
The last principle $I_0(\lambda)$ was introduced by Woodin in 1984 to prove projective determinacy. Like the first three large cardinal notions, there is no known inconsistency proof of $I_0$ in $\ZFC$.

Alternatively, we may ask about the consistency of a Reinhardt embedding over a weaker subsystem of $\ZFC$. 
The most natural theory we can consider is $\ZF$, but it is not known if $\ZF$ with a Reinhardt cardinal is consistent. Surprisingly, the case of a Kunen cardinal is different modulo $\ZFC+I_0$:
\begin{theorem}[Schlutzenberg \cite{Schlutzenberg2020Kunen}] \pushQED{\qed} \label{Theorem: Schlutzenberg's theorem}
    If $\mathsf{ZFC}+I_0$ is consistent, then so is $\mathsf{ZF}$ with a Kunen cardinal. 
\end{theorem}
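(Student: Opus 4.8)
The plan is to take the model witnessing $I_0$ and read off a Kunen cardinal inside the choiceless inner model $L(V_{\lambda+1})$. So I would fix a model $V \models \ZFC + I_0$ together with a nontrivial elementary embedding $j\colon L(V_{\lambda+1}) \to L(V_{\lambda+1})$ with $\crit(j) = \kappa < \lambda$, and use the standard fact that $\lambda = \sup_n j^n(\kappa)$, so that $j(\lambda) = \lambda$ and hence $j$ fixes $V_{\lambda+1}$ as a set. I set $N := L(V_{\lambda+1})$, a transitive model of $\ZF$, and aim to show $N \models$ ``$\lambda$ is a Kunen cardinal''. Since $V_{\lambda+2}^N = \calP(V_{\lambda+1})^N = \calP(V_{\lambda+1}) \cap N$, the natural witness is the restriction $k := j \upharpoonright V_{\lambda+2}^N$, and the theorem reduces to three claims: that $k$ maps $V_{\lambda+2}^N$ into itself, that $k$ is a nontrivial elementary self-embedding of the structure $(V_{\lambda+2}^N, \in)$, and, crucially, that $k$ is an element of $N$.

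The first two claims I expect to be routine. Because $j(\lambda) = \lambda$, elementarity gives $j(V_{\lambda+1}) = V_{\lambda+1}$ and hence $j(V_{\lambda+2}^N) = V_{\lambda+2}^N$, so $k$ does map $V_{\lambda+2}^N$ to itself; nontriviality is immediate since $\kappa \in V_\lambda \subseteq V_{\lambda+2}^N$ while $k(\kappa) = j(\kappa) > \kappa$. For elementarity I would exploit that the structure $(V_{\lambda+2}^N, \in)$ is interpretable in $N$: for each formula $\varphi$ the relation ``$(V_{\lambda+2}^N, \in) \models \varphi(\bar a)$'' is expressed in $N$ by relativizing the quantifiers of $\varphi$ to the $N$-definable set $\calP(V_{\lambda+1})$, using the parameter $V_{\lambda+1}$. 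Since $j$ fixes this parameter and is elementary as a map $N \to N$, it preserves the relativized satisfaction; as $j(\bar a) = k(\bar a)$ for $\bar a \in V_{\lambda+2}^N$, this yields the required equivalence $(V_{\lambda+2}^N,\in) \models \varphi(\bar a)$ iff $(V_{\lambda+2}^N,\in)\models \varphi(k(\bar a))$.

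The hard part will be showing $k \in N$, since the definition of a Kunen cardinal demands that the embedding exist as a set in $N$. I would break this into two steps. First, I would show that $e := j \upharpoonright V_{\lambda+1}$ already lies in $N$: for $A \in V_{\lambda+1}$ one has the $I_1$-style identity $j(A) = \bigcup_{\alpha < \lambda} j(A \cap V_\alpha)$, so $e$ is definable over $(V_{\lambda+1}, \in)$ from the single parameter $j \upharpoonright V_\lambda$, which itself lies in $V_{\lambda+1}$; after coding the graph of $e$ as a subset of $V_{\lambda+1}$ via a definable pairing function, this places $e \in N$. Second, and this is the genuine obstacle, I would need to recover the whole of $j \upharpoonright V_{\lambda+2}^N$ inside $N$ from the set $e$. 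The difficulty is that although $j$ is determined by $e$, a typical $A \in \calP(V_{\lambda+1})^N$ is only defined over $L_\gamma(V_{\lambda+1})$ for some large $\gamma$, so computing $j(A)$ through its definition appears to require knowing $j$ on the ordinal parameters, i.e. the class $j \upharpoonright \mathrm{Ord}$, which is not available as a set in $N$. To overcome this I would analyze how $j$ acts on the ordinals relevant to $L(V_{\lambda+1})$, showing that the needed values are controlled (for instance that $j$ fixes $\Theta^N$ and that its action below can be captured definably), or equivalently represent $j \upharpoonright V_{\lambda+2}^N$ as an ultrapower/extender map generated by $e$ and carried out inside $N$. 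Once $k \in N$ is secured, $N$ witnesses a Kunen cardinal, and since $N \models \ZF$, this establishes $\mathrm{Con}(\ZF + {}$``there is a Kunen cardinal''$)$ from $\mathrm{Con}(\ZFC + I_0)$.
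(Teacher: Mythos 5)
Your overall strategy---restrict the $I_0$ embedding to the $V_{\lambda+2}$ of a choiceless inner model---is the right starting point, and the first two of your three claims (that $k$ maps $V_{\lambda+2}^N$ into itself and is a nontrivial elementary self-embedding of $(V_{\lambda+2}^N,\in)$) are fine. But the third claim, $k\in L(V_{\lambda+1})$, which you correctly identify as the crux, is not established by your sketch, and it is also not how the theorem is actually proved. The precise form of Schlutzenberg's result recorded in Section 3 of the paper makes the point: the model witnessing the Kunen cardinal is not $N=L(V_{\lambda+1})$ but $L(V_{\lambda+1},k)$; one adjoins $k=j\restricts V_{\lambda+2}^{L(V_{\lambda+1})}$ as a new predicate rather than locating it inside $N$. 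The substance of the theorem then lies in showing that this adjunction is harmless: that $V_{\lambda+2}^{L(V_{\lambda+1},k)}=V_{\lambda+2}^{L(V_{\lambda+1})}$ (so $k$ really is total on the new model's $V_{\lambda+2}$---this is item (3) of the precise statement), and that $k$ remains elementary and $\ZF+\DC_\lambda+I_0(\lambda)$ survives in $L(V_{\lambda+1},k)$. None of that is addressed by your proposal.

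Concretely, the two escape routes you offer for the hard step do not close the gap. Knowing $e=j\restricts V_{\lambda+1}\in N$ (which is correct, via continuity at $\lambda$ and coding $j\restricts V_\lambda$ as an element of $V_{\lambda+1}$) does not let you compute $j(A)$ for an arbitrary $A\in\calP(V_{\lambda+1})^N$ inside $N$: as you yourself note, such an $A$ is only definable over some $L_\gamma(V_{\lambda+1})$ from ordinal parameters, and recovering $j(A)$ from that definition requires $j$ on those ordinals, a proper-class amount of information from $N$'s point of view. The ``extender generated by $e$'' idea faces the same obstruction: one must prove that the construction carried out inside $N$ from $e$ actually agrees with $j$ on all of $V_{\lambda+2}^N$, and that agreement is essentially the coherence statement that constitutes the hard part of Schlutzenberg's argument---and even granting it, it yields that $k$ is \emph{determined by} $e$, not that $k$ is an \emph{element of} $N$. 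Indeed, if $k\in L(V_{\lambda+1})$ were obtainable this directly, the passage to $L(V_{\lambda+1},k)$ would be unnecessary. So as written the proposal is not a proof; to repair it you should follow the actual route, working in $L(V_{\lambda+1},k)$ and proving the preservation facts above, rather than trying to force $k$ into $L(V_{\lambda+1})$.
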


Another possibility we can consider is $\ZFC$ without Powerset, namely $\ZFC^-$. More precisely, $\ZFC^-$ is obtained from $\ZFC$ by removing Powerset but adding Collection. (See \autoref{Subsection: Set theory} to see why we add Collection.)
Unlike the full $\ZFC$ case, Matthews \cite{Matthews2020} proved that $\ZFC^-$ with a Reinhardt cardinal is consistent.
\begin{theorem}[Matthews {\cite{Matthews2020}}]
    If $I_1(\lambda)$ holds, then $H_{\lambda^+}$ is a model of $\ZFC^-_j$ with a non-trivial Reinhardt embedding $j$. Furthermore, $V_\lambda\in H_{\lambda^+}$ so $H_{\lambda^+}$ thinks $V_\lambda$ exists.
\end{theorem}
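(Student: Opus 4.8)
The plan is to lift the given embedding $j\colon V_{\lambda+1}\to V_{\lambda+1}$ to an embedding $\hat{\jmath}\colon H_{\lambda^+}\to H_{\lambda^+}$ by presenting $H_{\lambda^+}$ as a structure interpreted inside $(V_{\lambda+1},\in)$, and then to verify the axioms of $\ZFC^-_j$. First I would record the standard preliminaries: writing $\kappa_0=\crit(j)$ and $\kappa_{n+1}=j(\kappa_n)$, the ordinal $\lambda=\sup_n\kappa_n$ is a strong limit (a limit of inaccessibles), so $|V_\lambda|=\lambda$ and hence $V_\lambda\in H_{\lambda^+}$, giving the ``furthermore'' clause; moreover $\lambda^+$ is regular, so $H_{\lambda^+}\models\ZFC^-$ by the usual characterization of the $H_\kappa$. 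The key structural observation is that, since $|V_\lambda|=\lambda$, every $x\in H_{\lambda^+}$ has $(\trcl(\{x\}),\in)$ isomorphic to a well-founded, extensional relation $R$ on a subset of $V_\lambda$ with a unique top point; such an $R$ is a subset of $V_\lambda$, hence an element of $V_{\lambda+1}$, and conversely the transitive collapse of any such $R$ lies in $H_{\lambda^+}$.

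Next I would make the coding first-order over $(V_{\lambda+1},\in)$. Here $V_\lambda$ is definable in $(V_{\lambda+1},\in)$ as the unique transitive element containing every transitive element as a subset, and $j(V_\lambda)=V_\lambda$. I call $R$ a \emph{code} if it is a well-founded, extensional relation on a subset of $V_\lambda$ with a unique $R$-maximal point from which every point is reachable; this is expressible because the subsets of $\field(R)$ witnessing well-foundedness all lie in $V_{\lambda+1}$, so the real well-foundedness coincides with its internal version. I let $R\cong R'$ mean the two pointed relations are isomorphic (the isomorphism again being an element of $V_{\lambda+1}$) and $R\in^{*}R'$ mean $R$ is isomorphic to the restriction of $R'$ to the predecessors of an $R'$-predecessor of the top. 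Then transitive collapse gives $(\{R:R\text{ a code}\}/{\cong},\in^{*})\cong(H_{\lambda^+},\in)$. Since $j$ is $\in$-elementary on $V_{\lambda+1}$ and fixes $V_\lambda$, it sends codes to codes and respects $\cong$ and $\in^{*}$, so it descends to the quotient; transporting along the collapse defines $\hat{\jmath}$. Translating each $\{\in\}$-formula $\varphi$ into its interpreted form $\varphi^{*}$ yields $H_{\lambda^+}\models\varphi(\vec x)\iff V_{\lambda+1}\models\varphi^{*}(\vec R)\iff V_{\lambda+1}\models\varphi^{*}(j\vec R)\iff H_{\lambda^+}\models\varphi(\hat{\jmath}\vec x)$, so $\hat{\jmath}$ is $\in$-elementary, i.e. it satisfies the elementarity schema of $\ZFC^-_j$. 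For $x\in V_\lambda$ the relation $\trcl(\{x\})\subseteq V_\lambda$, so ``$R$ codes $x$'' is itself first-order over $V_{\lambda+1}$ and elementarity gives $\hat{\jmath}\restriction V_\lambda=j\restriction V_\lambda$; in particular $\hat{\jmath}(\kappa_0)=j(\kappa_0)>\kappa_0$, so $\hat{\jmath}$ is non-trivial. Using that $\lambda$ is the largest cardinal of $H_{\lambda^+}$ (hence $\hat{\jmath}(\lambda)=\lambda$) together with $j(A)=\bigcup_{\alpha<\lambda}j(A\cap V_\alpha)$ for $A\subseteq V_\lambda$, I would upgrade this to $\hat{\jmath}\restriction V_{\lambda+1}=j$, so $\hat{\jmath}$ genuinely extends $j$; I then rename it $j$.

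It remains to check the Separation and Collection schemas of $\ZFC^-_j$, now in the expanded language $\{\in,j\}$. Separation is immediate: $H_{\lambda^+}$ is transitive and closed under subsets, so for any $A\in H_{\lambda^+}$ and any $\{\in,j\}$-formula the defined subclass of $A$ is a genuine set of size $\le\lambda$, hence lies in $H_{\lambda^+}$, no matter how the formula is evaluated. For Collection, given $A\in H_{\lambda^+}$ and a $\{\in,j\}$-formula $\varphi$ with $(H_{\lambda^+},\in,j)\models\forall x\in A\,\exists y\,\varphi$, I note that $(H_{\lambda^+},\in,j)$ is a set structure (as $j$ is a set), so its satisfaction relation is definable in $V$; using $\ZFC$ I choose for each of the $\le\lambda$ many $x\in A$ a witness $y_x\in H_{\lambda^+}$ and set $B=\{y_x:x\in A\}$. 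Then $|B|\le\lambda$, so $B\in H_{\lambda^+}$ and witnesses the instance of Collection. Together with the elementarity schema this gives $H_{\lambda^+}\models\ZFC^-_j$.

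I expect the main obstacle to be the construction and elementarity of the lift $\hat{\jmath}$, rather than the axioms. One must take care that codes avoid forming ordered pairs of rank $\lambda$ outside $V_{\lambda+1}$ (handled by making the top point definable from $R$ rather than carried as extra data), that well-foundedness and extensionality are correctly absolute between $V$ and $(V_{\lambda+1},\in)$, and that the lifted map really agrees with the original $j$ on sets of rank exactly $\lambda$, which needs the continuity identity $j(A)=\bigcup_{\alpha<\lambda}j(A\cap V_\alpha)$ rather than bare elementarity. By contrast, once $\hat{\jmath}$ is in place the model-theoretic schemas are routine, since closure under subsets trivializes Separation and regularity of $\lambda^+$ (with $\mathsf{AC}$) trivializes Collection, even with $j$ in the language.
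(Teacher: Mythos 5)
Your proposal is correct and follows essentially the same route the paper attributes to Matthews: code each $X\in H_{\lambda^+}$ by a well-founded extensional relation in $V_{\lambda+1}$ on (a subset of) $V_\lambda$, lift $j$ by decoding $j$ of the code, and verify the schemas of $\ZFC^-_j$ using that $H_{\lambda^+}$ is closed under subsets and that $\lambda^+$ is regular. The paper itself only cites \cite{Matthews2020} and sketches exactly this membership-code idea in the introduction, so there is nothing further to compare.
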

Here $\ZFC^-_j$ is the theory over the language of $\{\in,j\}$ with the axioms of $\ZFC$ without Powerset, but with Collection and Separation in the expanded language. The assertion `$j$ is a non-trivial elementary embedding' is the combination of the following two assertions:
\begin{itemize}
    \item (Nontriviality) There is $x$ such that $j(x)\neq x$.
    \item (Elementarity) For every formula $\phi(x_0,\cdots, x_{n-1})$ in the language $\{\in\}$, we have
    \begin{equation*}
        \forall x_0\cdots\forall x_{n-1} [\phi(x_0,\cdots, x_{n-1}) \lr \phi(j(x_0),\cdots,j(x_{n-1}))]. \footnotemark
    \end{equation*}
\end{itemize}
\footnotetext{Elementarity is an axiom schema and not a single axiom.}%
The main idea of Matthews' proof \cite{Matthews2020} for obtaining a model of $\ZFC^-_j$ with a Reinhardt embedding $j\colon V\to V$ is turning $V_{\lambda+1}$ for an $I_1$-cardinal $\lambda$ into $H_{\lambda^+}$.
We can understand Matthews' proof as follows: We can view $V_{\lambda+1}$ as a model of second-order set theory by understanding $V_{\lambda+1}$ as a universe $V_\lambda$ together with its classes $\calP(V_\lambda) = V_{\lambda+1}$. 
Then we not only have $V_{\lambda+1}\subseteq H_{\lambda^+}$, but also that every member $X$ of $H_{\lambda^+}$ is coded by a relation $E\subseteq \lambda\times\lambda$ encoding $(\trcl X, \in)$. 
Then we can extend $j\colon V_{\lambda+1}\to V_{\lambda+1}$ into $k\colon H_{\lambda^+}\to H_{\lambda^+}$ by defining $k(X)$ to be the `decoding' of $j(E)$ when $E\subseteq \lambda\times\lambda$ encodes $X$.
However, the Reinhardt embedding over the Matthews' model fails to satisfy the following property, unlike a Reinhardt embedding over $\ZF$:
\begin{definition}
    An elementary embedding $j\colon V\to V$ is \emph{cofinal} if for every $a$ there is $b$ such that $a\in j(b)$.
\end{definition}
Thus, the following question arises:
\begin{question}
    Is the following theory consistent: $\ZF^-_j$ with a \emph{cofinal} Reinhardt elementary embedding $j\colon V\to V$?
\end{question}

The main goal of this paper is to answer the above question positively, namely:
\begin{theorem*}
    $\ZFC + I_0$ proves there is a transitive model of $\ZF^-_j$ with a cofinal Reinhardt embedding $j$.  
\end{theorem*}

To prove the above theorem, we apply the same idea to get a model of $\ZFC^-_j$ with a Reinhardt embedding, but we need a model of second-order set theory different from $V_{\lambda+1}$ since the $I_1$ embedding becomes a non-cofinal elementary embedding over the universe of coded sets $H_{\lambda^+}$. We overcome this issue by turning $V_{\lambda+2}$ with an elementary embedding $j\colon V_{\lambda+2}\to V_{\lambda+2}$ into a model of first-order set theory. Although Kunen cardinals are inconsistent with $\ZFC$, a recent result by Schlutzenberg \cite{Schlutzenberg2020Kunen} states that $\ZFC+I_0$ interprets $\ZF$ with a Kunen cardinal, so we can use $V_{\lambda+2}$ for a Kunen cardinal $\lambda$ to construct a desired model.
We will work in Schlutzenberg's model satisfying $\ZF$ with the existence of a Kunen cardinal $\lambda$, and consider the collection of sets coded by a member of $V_{\lambda+2}$. It will turn out that the collection is a transitive model of $\ZF^-_j$ with a cofinal Reinhardt embedding $j$. 

There are at least two ways to code sets on the first-order side to classes on the second-order side: 
Matthews \cite{Matthews2020} codes a set $a$ into its \emph{membership code} $(\trcl(a),\in\restricts\trcl(a))$ akin to Williams' approach \cite[Chapter 2]{WilliamsPhD} and that of Goldberg \cite{Goldberg2021EvenOrdinals}. Anton and Friedman \cite{AntosFriedman2017Hyperclass} coded sets as well-founded trees, which is close to what Simpson did in \cite[\S VII.3]{Simpson2009}. 
Both approaches have pros and cons: The membership code approach allows us to represent sets in a unique membership code up to isomorphism. However, constructing a new membership code from previous ones requires a technical maneuver. Constructing a new set is easier in the tree approach, but different non-isomorphic trees can represent the same set. In this paper, we choose the tree approach that Simpson used in \cite{Simpson2009}.

\section{Preliminaries}

\subsection{Set theory and elementary embeddings} \label{Subsection: Set theory}
In this subsection, we clarify basic notions about $\ZFC$ without Powerset and elementary embeddings. Formulating set theory without Powerset requires more caution than we may naively expect since $\ZFC$ without Powerset exhibits various ill-behaviors. For example, \cite{GitmanHamkinsJohnstone2016} proved that it is consistent with $\ZFC$ without Powerset that $\omega_1$ exists but is singular. It turns out that replacing Replacement with Collection avoids these ill-behaviors.
We also must be careful to formulate Choice over $\ZF$ without Powerset, since equivalent formulations of Choice over $\ZF$ become non-equivalent without Powerset: 
Szczepaniak proved \cite[Theorem III]{Zarach1983UnionZFmodels} that the Choice of the form `every family of non-empty sets has a choice function' does not imply the Well-ordering principle if we do not have Powerset, and even adding Collection does not resolve the non-implication.
Hence, when we formulate `$\ZFC^-$ without Powerset,' we use the Well-ordering principle instead of Choice.

\begin{definition} \label{Definition: ZFC without Powerset}
    $\ZF^-$ is $\ZF$ without Powerset but with the Collection Scheme instead of the Replacement Scheme.
    $\ZFC^-$ is obtained from $\ZF^-$ by adding the \emph{Well-ordering principle}, which asserts that every set is well-orderable.
\end{definition}

Then let us define $\ZF^-_j$ as follows:
\begin{definition}
    $\ZF^-_j$ is the theory over the language $\{\in,j\}$, whose axioms are the usual axioms of $\ZF^-$ 
    with Nontriviality and Elementarity, with  Separation and Collection schema in the new language $\{\in,j\}$.
    $\ZFC^-_j$ is obtained from $\ZF^-_j$ by adding the Well-ordering principle. 
    We often add the expression `with a Reinhardt embedding' after $\ZF^-_j$ to stress that $j\colon V\to V$ is a Reinhardt embedding.
\end{definition}

As stated in the Introduction, Matthews \cite{Matthews2020} proved that a Reinhardt embedding is compatible with $\ZFC^-$.
Matthews also proved a form of Kunen inconsistency theorem in the following form:
\begin{theorem}[Matthews {\cite[Theorem 5.4]{Matthews2020}}] \label{Theorem: Matthew's Kunen inconsistency}
    Working over $\ZFC^-_j$, if $V_\lambda$ exists for $\lambda=\sup_{n<\omega}j^n(\crit j)$,%
    \footnote{$\ZF^-_j$ proves $\lambda$ exists since $\lag j^n(\crit j)\mid n<\omega\rag$ exists by Replacement in the expanded language.}
    then $j\colon V\to V$ is not cofinal. 
\end{theorem}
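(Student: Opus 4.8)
The plan is to mimic Kunen's classical argument, which produces a contradiction from an $\omega$-Jónsson-type combinatorial fact applied at $\lambda = \sup_n j^n(\crit j)$. The crucial object is the critical sequence $\lag \kappa_n \mid n<\omega\rag$ where $\kappa_0 = \crit j$ and $\kappa_{n+1} = j(\kappa_n)$; by the footnote this sequence and its supremum $\lambda$ exist in $\ZF^-_j$ via Replacement in the expanded language. The hypothesis is that $V_\lambda$ exists, and the goal is to derive that $j$ cannot be cofinal, i.e.\ there must be some $a$ with $a\notin j(b)$ for all $b$.

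\emph{First} I would record the standard fact that $j(\lambda) = \lambda$ and $j[\lambda] \subseteq \lambda$, so $j$ restricts to a map on ordinals below $\lambda$ fixing the critical sequence's supremum; each $\kappa_n$ is a critical point of $j^{(n)}$ in the appropriate sense, and $\lambda$ is the first fixed point of $j$ above $\crit j$. \emph{Next}, supposing toward a contradiction that $j$ \emph{is} cofinal, I would argue that cofinality lets us control $j$ on all sets, and in particular that $j$ acts on $V_{\lambda+1}$-like objects coherently. The key is to transport Kunen's contradiction: cofinality should let me recover enough of $j \restriction V_{\lambda+1}$ (or an $\omega$-sequence of critical points together with a suitable subset of $\lambda$) to run the $\omega$-Jónsson argument. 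Concretely, I would aim to produce from cofinality a set $A \subseteq \lambda$ that is ``$j$-generated'' — something like a cofinal branch or a fixed point witnessing $j[\lambda]\in V$ — which is exactly the ingredient Kunen's proof forbids.

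\emph{The main obstacle} I anticipate is that we are working without Powerset, so $V_{\lambda+1} = \calP(V_\lambda)$ need not exist as a set even though $V_\lambda$ does; hence I cannot simply quote Kunen's theorem, which lives in $\ZFC$ and uses $V_{\lambda+2}$. The real content is therefore to show that \emph{cofinality of $j$ substitutes for the missing powerset}: cofinality should guarantee that every subset of $\lambda$ lies in the range of $j$ on some set, which is enough to reconstruct the critical-point combinatorics internally. So the heart of the argument is the implication ``$j$ cofinal $\Rightarrow$ enough of $\calP(V_\lambda)$ is captured by $j$ to trigger Kunen,'' and I expect this to be where cofinality is used essentially and where the hypothesis ``$V_\lambda$ exists'' is needed to even speak of $\lambda$-sequences and subsets of $\lambda$.

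\emph{To close}, once cofinality yields the forbidden object — a $j$-definable structure on $\lambda$ of the type Kunen rules out (e.g.\ a witness to the $\omega$-Jónsson property failing, or equivalently $j \restriction \lambda$ being ``captured'') — the contradiction follows exactly as in the $\ZFC$ proof. Contrapositively, this shows that under the stated hypotheses $j$ cannot be cofinal, which is the claim. The payoff for the paper is that this theorem forces any cofinal Reinhardt embedding witnessing the main result to live in a model where $V_\lambda$ does \emph{not} exist, explaining why the construction must move from $V_{\lambda+1}$ to $V_{\lambda+2}$ and work over a model that omits $V_\lambda$.
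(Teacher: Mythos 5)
Your proposal correctly senses that cofinality must be converted into a ``forbidden object'' attached to $\lambda$, but as written it has two genuine gaps, and the object you aim for is the wrong one. First, you propose to extract from cofinality ``a fixed point witnessing $j[\lambda]\in V$'' and treat that as the Kunen-style contradiction. In $\ZFC^-_j$ the set $j^"[\lambda]=\{j(\alpha)\mid \alpha<\lambda\}$ exists \emph{unconditionally}, by Replacement in the expanded language; no cofinality is needed, and its mere sethood contradicts nothing. What cofinality actually buys --- and this is the step your plan never supplies --- is a set $z$ with $j^"[\lambda]\in j(z)$; fixing a surjection $f\colon\lambda\to z$ (available because $\lambda^+$ cannot exist, so every set is an image of $\lambda$) one gets $j^"[\lambda]=j(f)(\zeta)$ for some $\zeta<\lambda$. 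It is this reflection of $j^"[\lambda]$ as a value of $j$ of a $\lambda$-indexed family, not the sethood of $j^"[\lambda]$, that drives the contradiction.

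Second, the closing step ``the contradiction follows exactly as in the $\ZFC$ proof'' does not go through: Kunen's $\omega$-J\'onsson argument needs $[\lambda]^\omega$ to be a set carrying a well-ordering, and without Powerset neither is available even when $V_\lambda$ exists (an $\omega$-sequence from $\lambda$ is a subset of $\omega\times\lambda$, hence an element of $\calP(V_\lambda)$, which may be a proper class). The argument the paper records --- Hayut's proof of \autoref{Proposition: Hayut's theorem for non-existence of a cofinal embedding}, which strengthens the present theorem by dropping the hypothesis that $V_\lambda$ exists --- replaces the J\'onsson combinatorics entirely: from $\zeta<\kappa_n$ one forms the ultrafilter $\mathcal{U}=\{X\subseteq\kappa_n\mid \zeta\in j(X)\}$ and the sets $A_\alpha=\{\beta<\kappa_n\mid\alpha\in f(\beta)\}\in\mathcal{U}$; a pigeonhole argument, resting on the lemma that no set of rank below $\kappa_0$ surjects onto $\kappa_0$ (a choiceless substitute for inaccessibility), produces $E\subseteq\lambda$ with at least $\kappa_n$ elements such that $j(E)\subseteq j(f)(\zeta)=j^"[\lambda]$; and then the $\kappa_0$-th element of $j(E)$ cannot lie in the range of $j$, a contradiction. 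None of these steps is present in, or forced by, your outline, so the proposal does not yet constitute a proof.
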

Note that Matthews proved \autoref{Theorem: Matthew's Kunen inconsistency} from the $\Sigma_0$-elementarity of $j$, not the full elementarity.
However, we can check that a cofinal $\Sigma_0$-elementary $j\colon V\to V$ is also fully elementary.
The additional assumption `$V_\lambda$ exists' in \autoref{Theorem: Matthew's Kunen inconsistency} does not follow from the existence of a Reinhardt embedding by the following result:
\begin{theorem}[Gitman-Matthews {\cite[Theorem 6.4]{GitmanMatthews2022}}]
    There is a model of $\ZFC^-_j$ with a Reinhardt embedding plus `$\mathcal{P}(\omega)$ is a proper class.'
\end{theorem}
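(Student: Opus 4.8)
The plan is to build on Matthews' model rather than re-code from scratch, forcing in a proper class of new reals so that they cannot be gathered into a single set. Write $N = H_{\lambda^+}$ for an $I_1$-cardinal $\lambda$, carrying Matthews' Reinhardt embedding $k\colon N\to N$; the obstruction is that the reals of $N$ form a set, as $\calP(\omega)^N$ has size at most $\lambda$. The key observation is that, although the model we seek satisfies only $\ZFC^-$, from the standpoint of the ambient universe $V$ (where $I_1(\lambda)$ holds and $N$ is a mere set) the construction is ordinary set forcing over the set-model $N$: I may use $N$'s ordinals $\mathrm{Ord}^N = \lambda^+$ as the index set of a Cohen forcing and build the generic inside $V$. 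Adding one Cohen real $c_\alpha$ for each $\alpha < \lambda^+$ makes $\alpha\mapsto c_\alpha$ an injection of $\mathrm{Ord}^N$ into the reals of the extension; internally this is a class injection $\mathrm{Ord}\to\calP(\omega)$, so $\calP(\omega)$ cannot be a set there, i.e.\ it is a proper class.

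Concretely, I would take $\bbP = \Fn(\lambda^+\times\omega,2)$ as computed in $N$ and force over $N$ with a generic $G \in V$. Since $\bbP$ is definable in $N$ from $\mathrm{Ord}^N$ and $k$ maps $\mathrm{Ord}^N$ onto itself, $k(\bbP) = \bbP$, with $k$ acting on a condition by shifting its ordinal coordinates through $k$ and fixing those below $\crit k$. To lift $k$ to an elementary $\hat k\colon N[G]\to N[G]$ I need $k[G]\subseteq G$, so that the source and target generics agree; a generic chosen blindly need not be closed under $k$. I would instead take $G$ \emph{invariant}: force with one Cohen real per $k$-orbit of coordinates and copy it along the whole orbit, so that $c_\alpha = c_{k(\alpha)}$ for all $\alpha$. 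Such a $G$ is still $N$-generic and closed under $k$, so $\hat k(\tau_G) = k(\tau)_G$ is a well-defined elementary self-embedding extending $k$.

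Granting the generic, the rest is routine. As a finite-support Cohen forcing, $\bbP$ is ccc and pretame over $N$, hence preserves $\ZF^-$; the Well-ordering principle survives because the new reals are indexed by orbits, and therefore by ordinals, so $N[G]\models\ZFC^-$. The embedding $\hat k$ is nontrivial since it extends $k$, which already moves $\crit k$, and $\calP(\omega)^{N[G]}$ is a proper class by the first paragraph. It remains to verify Separation and Collection in the language $\{\in,j\}$ for $(N[G],\hat k)$; via names these reduce to the same schemes for $(N,k)$, which is Matthews' theorem.

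The hard part will be the invariant generic. One must meet every dense subclass of $\bbP$ lying in $N$ while enforcing $c_\alpha = c_{k(\alpha)}$ along every orbit, and then check that this ``diagonal'' forcing, indexed by the orbits, is still equivalent to $\bbP$ and genuinely adds a proper class of distinct reals. A secondary point is to confirm that the reals of $N[G]$ fail to form a set \emph{internally}: any putative such set, together with the class map $\alpha\mapsto c_\alpha$, would contradict Collection in $N[G]$, so that the proper-classhood of $\calP(\omega)$ is a theorem of the model and not an artifact of the external viewpoint.
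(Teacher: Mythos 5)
The paper does not actually prove this statement --- it is imported as a black box from Gitman--Matthews --- so your argument has to stand entirely on its own, and as written it has two genuine gaps. The first is the existence of the generic: $N=H_{\lambda^+}$ has cardinality $2^\lambda$ in $V$, and $\bbP=\Fn(\lambda^+\times\omega,2)$ is nowhere countably closed, so there is no $N$-generic filter to be ``built inside $V$''; the construction has to take place in a forcing extension of $V$ (or be phrased as a consistency/interpretability argument), and the proposal never sets this up.

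The second and more serious gap is that the ``invariant generic'' is provably not generic for the forcing you wrote down. Fix $\alpha<\lambda^+$ with $k(\alpha)\neq\alpha$, and consider
\[
D_\alpha=\{p\in\bbP : \exists n<\omega\ [(\alpha,n),(k(\alpha),n)\in\operatorname{dom}(p) \text{ and } p(\alpha,n)\neq p(k(\alpha),n)]\}.
\]
This is dense and is a class of $N$, definable from the two parameters $\alpha,k(\alpha)\in N$ in the pure $\in$-language, with no reference to $k$. Any filter enforcing $c_\alpha=c_{k(\alpha)}$ avoids $D_\alpha$, so invariance under $k$ is incompatible with $\bbP$-genericity over $N$. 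Consequently the forcing and truth lemmas for $\bbP$ do not apply to your $G$: both the claim that $N[G]\models\ZF^-$ (pretameness only helps for genuinely generic filters) and the well-definedness and elementarity of $\hat k(\tau_G)=k(\tau)_G$ are unjustified. Re-indexing by $k$-orbits does not dissolve the tension, because $k$ does not fix orbits setwise --- the orbit of $\crit k$ is the critical sequence $\{k^n(\crit k):n<\omega\}$, which $k$ maps to a proper subset of itself --- so $k$ is not an automorphism of the orbit-indexed forcing either, and the conflict between $k$-coherence of the filter and genericity simply reappears one level up. You correctly flag this as ``the hard part,'' but it is precisely where the theorem lives, and the sketch gives no reason to believe a single $k$-coherent, $N$-generic filter for $\Fn(\lambda^+\times\omega,2)$ can exist; whatever the cited construction does, it cannot be this naive version.
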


Yair Hayut proved that the extra assumption `$V_\lambda$ exists' in \autoref{Theorem: Matthew's Kunen inconsistency} is unnecessary.
The proof is included with Hayut's permission:
\begin{proposition} \label{Proposition: Hayut's theorem for non-existence of a cofinal embedding}
    Working over $\ZFC^-_j$, $j\colon V\to V$ cannot be cofinal.
\end{proposition}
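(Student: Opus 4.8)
The plan is to run a Kunen-type seed/ultrapower argument, but route it through a \emph{set}-based ultrafilter rather than through the second-order structure $V_{\lambda+1}$, so that no instance of Powerset is needed. Suppose toward a contradiction that $j$ is cofinal, and write $\kappa=\crit j$, $\kappa_n=j^n(\kappa)$, and $\lambda=\sup_{n<\omega}\kappa_n$ (which exists by Replacement in the expanded language). First I record the basic facts. Applying elementarity to the sequence $\lag\kappa_n\mid n<\omega\rag$ gives $j(\lambda)=\lambda$, and the pointwise image $j[\lambda]=\{j(\alpha)\mid\alpha<\lambda\}$ is a cofinal subset of $\lambda$ of order type $\lambda$ (it is a set by Collection in the expanded language). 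Each $\kappa_{n+1}=j(\kappa_n)$ is regular (critical points, and hence their $j$-images, are regular), and the quantitative point is the \emph{discontinuity} of $j$ at each $\kappa_n$: since $j[\kappa_n]$ has order type $\kappa_n$ while $\kappa_{n+1}$ is regular and $>\kappa_n$, the set $j[\kappa_n]$ cannot be cofinal in $\kappa_{n+1}$, so $\sup j[\kappa_n]<\kappa_{n+1}$.

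Next comes the capture step. Using cofinality, fix $b$ with $j[\lambda]\in j(b)$; using the Well-ordering principle fix a cardinal $\mu$ and a bijection $f\colon\mu\to b$, so that $j(f)\colon j(\mu)\to j(b)$ is a bijection and there is $\zeta<j(\mu)$ with $j(f)(\zeta)=j[\lambda]$. Thus $j[\lambda]$ is caught by the single seed $\zeta$. I then form the ultrafilter $\mathcal U=\{X\subseteq\mu\mid\zeta\in j(X)\}$ together with the ultrapower $i\colon V\to M$ and the factor map $k\colon M\to V$, $k([g]_{\mathcal U})=j(g)(\zeta)$, so that $j=k\circ i$ and $k(s)=\zeta$ for the seed $s=[\operatorname{id}]_{\mathcal U}$. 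Because our theory omits Powerset, neither $\mathcal U$ nor $M$ is a set: $\mathcal U$ is only a definable class (no $\calP(\mu)$ is assumed to exist), and $M$ must be built as a definable class via Scott's trick, with {\L}o\'s's theorem justified by Separation and Collection in the language $\{\in,j\}$ and well-foundedness by the $\kappa$-completeness of $\mathcal U$. Carrying out this construction cleanly without Powerset is the first technical hurdle.

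The heart of the argument is to show that the derived $\mathcal U$ is, in effect, an $n$-huge measure for some finite $n$: concretely, that its support $\mu$ can be taken strictly below $\lambda$, and that the factor map $k$ has critical point above $\kappa_{n+1}$ for some $n$ with $\mu<\kappa_n$, so that $i$ agrees with $j$ on the critical sequence through this level, and in particular $i(\kappa_n)=\kappa_{n+1}$. Granting this, a contradiction drops out from two computations of $i(\kappa_n)$. On the one hand, since $\mathcal U$ is based on a set of size $\mu<\kappa_n=\operatorname{cf}(\kappa_n)$, every $g\colon\mu\to\kappa_n$ has bounded range, so $i$ is \emph{continuous} at $\kappa_n$, i.e. $i(\kappa_n)=\sup i[\kappa_n]$; thus $i[\kappa_n]$ is cofinal in $i(\kappa_n)$. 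On the other hand $i(\kappa_n)=\kappa_{n+1}$ is regular while $i[\kappa_n]$ has order type $\kappa_n<\kappa_{n+1}$, so $i[\kappa_n]$ cannot be cofinal in $i(\kappa_n)$. This is the precise sense in which the measure is ``too short'': its ultrapower has a continuity point below $\lambda$, whereas a faithful copy of $j[\lambda]$ demands discontinuity at every $\kappa_n$.

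The step I expect to be the genuine obstacle is the middle one: verifying that the seed $\zeta$ capturing $j[\lambda]$ can be located below $\lambda$ (so that $\mathcal U$ really is $n$-huge and continuity kicks in strictly below $\lambda$), while keeping $\crit k$ high enough that $i$ still computes $i(\kappa_n)=\kappa_{n+1}$ for an $n$ with $\mu<\kappa_n$. Reconciling ``support below $\lambda$'' with ``agreement with $j$ through $\kappa_{n+1}$'' is delicate, since the seed bound $\zeta<j(\mu)$ and the requirement $\crit k>\kappa_{n+1}$ pull the relevant indices in opposite directions; this is exactly where the $n$-hugeness of $\mathcal U$ must be extracted, and where I expect Hayut's analysis of the generator $\zeta$ to do the real work. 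Everything else — the basic facts about $\lambda$, the capture, and the final order-type contradiction — is routine once the ultrapower machinery has been set up over $\ZF^-$.
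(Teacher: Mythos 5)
The obstacle you flag in your middle step is real, and your proposal does not get past it, so this is a genuine gap rather than a proof. Two separate things go wrong. First, to locate the seed below $\lambda$ you need an input you never invoke: by Matthews' Theorem 5.2, over $\ZFC^-_j$ with a cofinal $j$ the cardinal $\lambda^+$ cannot exist, so \emph{every} set is a surjective image of $\lambda$; taking the capture function to be an onto map $f\colon\lambda\to z$ immediately forces $\zeta<j(\lambda)=\lambda$, hence $\zeta<\kappa_n$ for some $n$. Your version --- a bijection $f\colon\mu\to b$ with $\mu$ a cardinal --- gives no bound on $\mu$ or on $\zeta$ without that fact. Second, even granting $\zeta<\kappa_n$, your route needs $\crit k>\kappa_{n+1}$, i.e.\ that every ordinal below $\kappa_{n+1}$ is of the form $j(g)(\zeta)$ for some $g$ defined on $\kappa_n$; there is no reason this should hold in general (in the extreme case $\zeta<\kappa_0$ the derived ultrafilter is principal and $i$ is the identity), and on top of that you must justify forming the ultrapower of $V$ by a class ultrafilter, together with its fundamental theorem, in the absence of Powerset. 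So the computation you call routine rests on exactly the machinery that has not been set up.

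The paper's proof (Hayut's) keeps your capture step and your ultrafilter $\mathcal{U}=\{X\subseteq\kappa_n\mid\zeta\in j(X)\}$ but never builds an ultrapower. Instead it argues combinatorially: the fibers $A_\alpha=\{\beta<\kappa_n\mid\alpha\in f(\beta)\}$ all lie in $\mathcal{U}$, and $\mathcal{U}_0=\{A_\alpha\mid\alpha<\lambda\}$ is a set of rank below $\kappa_{n+1}$. A pigeonhole argument --- supported by a lemma that no set of rank less than $\kappa_0$ surjects onto $\kappa_0$, proved by showing $j$ fixes every set of rank less than $\kappa_0$ --- produces a single $B$ whose fiber $E=\{\alpha<\lambda\mid A_\alpha=B\}$ has at least $\kappa_n$ elements. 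Then $f(\beta)\supseteq E$ for all $\beta\in B$ together with $\zeta\in j(B)$ gives $j(E)\subseteq j(f)(\zeta)=j``\lambda$, while the fact that $E$ has at least $\kappa_n$ elements forces $j(E)$ to contain an ordinal (its $\kappa_0$th element) outside the range of $j$. This realizes your ``the measure is too short'' intuition without ever needing $M$, $k$, or a continuity computation; to salvage your write-up, add the nonexistence of $\lambda^+$ at the front and replace the ultrapower analysis with this fiber-counting argument.
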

\begin{proof}
    Suppose $j\colon V\to V$ is a cofinal Reinhardt embedding, and let $\kappa_n = j^n(\crit j)$.
    By \cite[Theorem 5.2]{Matthews2020}, $\lambda^+$ for $\lambda = \sup_{n<\omega} \kappa_n$ cannot exist, so every set surjects onto $\lambda$ since every set is well-orderable and $\lambda$ is the largest cardinal. We also note that $j(\lambda) = \lambda$. Since $j$ is cofinal, we can find $z$ such that $j^"[\lambda]\in j(z)$. Fix an onto map $f\colon \lambda\to z$ and $\zeta<\lambda$ such that $j(f)(\zeta)=j^"[\lambda]$.
    
    Now fix $n$ such that $\zeta<\kappa_n$ and let $\mathcal{U} = \{X\subseteq \kappa_n \mid \zeta\in j(X)\}$ be the induced ultrafilter over $\kappa_n$. If we take
    \begin{equation*}
        A_\alpha = \{\beta < \kappa_n \mid \alpha \in f(\beta)\},
    \end{equation*}
    then $A_\alpha\in\mathcal{U}$ for every $\alpha<\lambda$.
    Note that $\mathcal{U}$ itself could be a proper class, but the family $\mathcal{U}_0=\{A_\alpha\mid \alpha<\lambda\}$ is a set. Now let us claim the following: There is $B\subseteq \kappa_n$ such that 
    \begin{equation*}
        E = \{\alpha<\lambda\mid A_\alpha = B\}
    \end{equation*}
    has at least $\kappa_{n}$ elements. It is easy to prove if $V_{\kappa_0}$ exists: Then because $\kappa_0$ is inaccessible, so is $\kappa_{n}$, and $\mathcal{U}\in V_{\kappa_{n}+1}$ has cardinality less than $\kappa_{n+1}$. Since we have $\lambda$ many ordinals indexing $A_\alpha$, we can apply the pigeonhole principle. However, we do not know whether $V_{\kappa_{n+1}}$ exists, and the inaccessibility of $\kappa_{n+1}$ is also unclear. Despite that, we can prove $\kappa_{n+1}$ is close to an inaccessible cardinal:

    \begin{lemma} \label{Lemma: Semi inaccessibility of a critical point}
        Let $\mathcal{A}$ be a set whose rank is less than $\kappa_0$. Then there is no surjection $g\colon \mathcal{A}\to \kappa_0$.
    \end{lemma}
    \begin{proof}
        First, we can prove that for every set $a$ with rank less than $\kappa_0$, we have $j(a) = a$. We prove it by induction on $\rank a < \kappa_0$: Suppose that for every $x$ with $\rank x<\rank a$, we have $j(x)=x$. We claim that $j(a)\cap V_{\kappa_0}\subseteq a$.%
        \footnote{Here let $V_{\kappa_0}$ be the class of all sets whose rank is below $\kappa_0$. It can be a proper class but for each set $a$, $a\cap V_{\kappa_0}$ is a set by Full Separation.}
        For $y\in j(a)$ such that $\rank y<\kappa_0$, we have $\rank j(y) = j(\rank y) = \rank y < \rank j(a)$, so $\rank y < \rank a$. Hence, by the inductive hypothesis, we have $j(y)=y$, so $y\in a$. This shows $j(a)\cap V_{\kappa_0}\subseteq a$.
        
        Thus, if $j(a)\setminus a$ is not empty, then $j(a)\setminus a$ must have an element of rank at least $\kappa_0$. This implies $j(\rank a) = \rank j(a) > \kappa_0$, contradicting with that $\rank a < \kappa_0$.

        Now suppose that there is a surjection $g\colon\mathcal{A}\to\kappa_0$. Since $\mathcal{A}$ has rank less than $\kappa_0$, we have $j(\mathcal{A})=\mathcal{A}$.
        Hence $g=j(g)\colon\mathcal{A}\to j(\kappa_0)$ is also a surjection, a contradiction.
    \end{proof}
    As a corollary, if $\mathcal{A}$ is a set whose rank is less than $\kappa_{n+1}$, then there is no surjection from $\mathcal{A}$ to $\kappa_{n+1}$.

    Let us go back to the main proof.
    We claim that there is $B\in \mathcal{U}_0$ such that $E=\{\alpha<\lambda\mid A_\alpha = B\}$ has at least $\kappa_n$ elements.
    Assume the contrary that for each $A_\alpha$ there are fewer than $\kappa_{n}$ many $\beta<\lambda$ such that $A_\alpha=A_\beta$. 
    To derive the contradiction, let us consider the map $g\colon \kappa_n\times \mathcal{U}_0 \to \lambda$ given by
    \begin{equation*}
        g(\alpha,B) =
        \begin{cases}
            \gamma & \text{If }B=A_\gamma,\ \operatorname{ordertype}\{\delta<\gamma\mid A_\delta=B\} = \alpha, \\
            0 & \text{Otherwise.}
        \end{cases}
    \end{equation*}
    The idea of the definition of $g$ is as follows: For each $B\in \mathcal{U}_0$, consider the set $E_B = \{\gamma<\lambda\mid A_\gamma = B\}$. Then $|E_B| < \kappa_n$ for each $B\in\mathcal{U}_0$. Then $g$ maps $(\alpha,B)$ into the $\alpha$th element of $E_B$ if exists. Since $\lambda = \bigcup_{B\in\mathcal{U}_0} E_B$ and each $E_B$ has size less than $\kappa_n$, $g$ is surjective.
    In particular, we found a surjective map from a set rank less than $\kappa_n\times\mathcal{U}_0$ to $\lambda$, so we also have a surjective from a set rank less than $\kappa_n\times\mathcal{U}_0$ to $\kappa_{n+1}$.
    This is impossible by \autoref{Lemma: Semi inaccessibility of a critical point} since the rank of $\kappa_n\times\mathcal{U}_0$ is less than $\kappa_{n+1}$.

    Thus we can pick $B\in \mathcal{U}_0$ such that $E=\{\alpha<\lambda\mid A_\alpha = B\}$ has at least $\kappa_n$ elements. By definition of $A_\alpha$ and $E$, we have
    \begin{equation*}
        \forall \beta\in B (f(\beta)\supseteq E).
    \end{equation*}
    Also, $B\in \mathcal{U}$ since $B$ is one of $A_\alpha$, so $\zeta\in j(B)$.
    Hence we have $j(E)\subseteq j(f)(\zeta)=j^"[\lambda]$.
    However, $E$ has at least $\kappa_n$ elements, so the $\kappa_0$th element of $j(E)$ is not in the range of $j$.\footnote{If $j(\gamma)\in j(E)$ is the $\kappa_0$th element of $j(E)$, then by elementarity, $\gamma\in E$ is the $\xi$th element of $E$ for some $\xi<\kappa$. However, $j(\xi)=\xi<\kappa_0$, a contradiction.} That is, $j(E)\nsubseteq j^"[\lambda]$, a contradiction.
\end{proof}

\subsection{\texorpdfstring{$H_\lambda$}{H lambda} and \texorpdfstring{$H(X)$}{H(X)}}
The sets $H_\lambda$ consisting of all sets of hereditary cardinality less than $\lambda$ forms an important class of models of second-order $\ZFC^-$ for regular cardinals $\lambda$.
However, the definition and properties of $H_\lambda$ rely on the axiom of choice. In a choiceless world, there is no reason to believe that $\ZFC$-equivalent definitions of $H_\lambda$ are still equivalent. Here is a list of possible $\ZF$-definitions of $H_\lambda$ in a choiceless context:
\begin{enumerate}
    \item (\cite[Definition 2.8]{Goldberg2021EvenOrdinals}) $H(X)$ is the union of all transitive sets $M$ such that $M$ is an image of some $S\in X$.

    \item (\cite[Definition 1.7]{LubarskyRathjen2003}) $H^+(X)$ is the smallest class $Y$ such that whenever $f\colon S\to Y$ for some $S\in X$, then $\ran f\in Y$.

    \item (\cite[Definition 2.1]{AsperoKaragila2021}) $\mathcal{H}(X)$ is the class of sets $y$ such that there is no surjection from $\trcl(y)$ to $X$.
\end{enumerate}
We take the first definition for $H_\lambda$ in a choiceless context.
Also, \emph{we work over $\ZF$ throughout this subsection.}

It is clear that $\ZFC$ proves $H(\lambda)=H_\lambda$.
But it is not immediately clear that $\ZF$ proves $H(X)$ is a set from its definition. We prove that $\ZF$ proves $H(X)$ is always a set. For a set $S$, let us define $\theta(S)$ to be the supremum of all ordinals that are images of $S$; i.e.,
\begin{equation*}
    \theta(S) := \sup\{\alpha\in\mathrm{Ord}\mid \exists f\colon S\to \alpha (\text{$f$ is onto})\}.
\end{equation*}
Note that if $\theta(S)$ exists and there is an onto map $f\colon S\to\alpha$, then $\alpha<\theta(S)$: We also have an onto map $g\colon\alpha\to(\alpha+1)$, so $g\circ f$ witnesses $\alpha+1 \le \theta(S)$. In particular, $\theta(S)$ is a limit ordinal if it exists.

\begin{lemma} \label{Lemma: Transitive image of S is bounded by theta S}
    $\theta(S)$ exists.
    Furthermore, if $M$ is a transitive set that is an image of $S$, then $\rank M < \theta(S)$.
\end{lemma}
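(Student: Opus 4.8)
The plan is to treat the two assertions separately, starting with the well-definedness of $\theta(S)$. Here I would show that the collection of ordinals that are surjective images of $S$ is bounded, so that its supremum is a genuine ordinal. The key observation is that whenever $f\colon S\to\alpha$ is onto, the assignment $\beta\mapsto f^{-1}[\{\beta\}]$ is an injection of $\alpha$ into $\calP(S)$: distinct ordinals $\beta<\alpha$ have disjoint, nonempty, hence distinct, preimages. Thus every such $\alpha$ injects into $\calP(S)$, and by Hartogs' theorem the least ordinal $\aleph(\calP(S))$ that does not inject into $\calP(S)$ is a strict upper bound for all of them. Therefore the supremum defining $\theta(S)$ exists and satisfies $\theta(S)\le\aleph(\calP(S))$. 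This is the only place Powerset enters, which is harmless since the entire subsection is developed in full $\ZF$.

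For the second assertion, the heart of the matter is the following claim about transitive sets: the rank function restricted to a transitive set $M$ maps $M$ onto $\rank M$, i.e.\ every ordinal $\beta<\rank M$ is realized as $\rank x$ for some $x\in M$. I would establish this by proving the slightly stronger statement that for every $z\in M$ and every $\beta\le\rank z$ there is $x\in M$ with $\rank x=\beta$, arguing by induction on $\rank z$. If $\beta=\rank z$, take $x=z$; otherwise $\beta<\rank z=\sup_{w\in z}(\rank w+1)$ produces some $w\in z$ with $\rank w\ge\beta$, and transitivity of $M$ forces $w\in M$ with $\rank w<\rank z$, so the inductive hypothesis applies.

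Granting the claim, suppose $M$ is a transitive image of $S$ via a surjection $h\colon S\to M$. Composing with the now-surjective rank function gives an onto map $\rank\circ h\colon S\to\rank M$, so $\rank M$ is itself a surjective image of $S$. By the observation recorded just before the lemma (precompose a surjection $S\to\rank M$ with a surjection $\rank M\to\rank M+1$), every ordinal that is a surjective image of $S$ lies strictly below $\theta(S)$; hence $\rank M<\theta(S)$, as required.

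The step I expect to be the main obstacle is the surjectivity of the rank function on a transitive set, since this is where transitivity is genuinely used --- it is precisely what guarantees that $w\in z\in M$ implies $w\in M$ --- and where the rank-lowering induction does all the real work. Everything else reduces to bookkeeping around Hartogs' theorem and the elementary closure of ``surjective image of $S$'' under adding one to an ordinal.
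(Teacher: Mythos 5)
Your proof is correct, and it diverges from the paper's in both halves, in each case instructively. For the well-definedness the paper does not invoke Hartogs' theorem: it considers the set $\mathcal{W}\subseteq\calP(S\times S)$ of prewellorders of $S$, assigns to each $\prec\in\mathcal{W}$ the ordinal $\lvert\prec\rvert$ given by the supremum of the $\prec$-ranks, and observes that any surjection $f\colon S\to\alpha$ induces a prewellorder with $\lvert\prec\rvert=\alpha$, so $\theta(S)\le\sup\{\lvert\prec\rvert:{\prec}\in\mathcal{W}\}$, which exists by Replacement applied to the set $\mathcal{W}$. Your route through the injection $\beta\mapsto f^{-1}[\{\beta\}]$ and the Hartogs number $\aleph(\calP(S))$ is an equally valid $\ZF$ argument and essentially repackages the same idea: both code the ordinals in question by subsets of $S\times S$ or partitions of $S$, and both use Powerset only of $S$ (or $S\times S$), which is harmless here. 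For the second half you are in fact more careful than the paper: the paper asserts $\rank M=M\cap\mathrm{Ord}$ and concludes that this is a surjective image of $S$, but that identity fails for general transitive $M$ (for instance $M=\{0,\{0\},\{\{0\}\}\}$ is transitive with $M\cap\mathrm{Ord}=2$ while $\rank M=3$). The fact actually needed --- that $\rank M$ itself is a surjective image of $S$ --- is precisely what your induction supplies, by showing that the rank function maps a transitive $M$ onto $\rank M$ and then composing with the given surjection $h\colon S\to M$; the final step, that every ordinal which is a surjective image of $S$ is strictly below $\theta(S)$, is the observation the paper records just before the lemma, and you use it correctly. So your version of the second half is the one that should be read into the paper.
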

\begin{proof}
    For the well-definability of $\theta(S)$, consider the set $\mathcal{W}\subseteq \mathcal{P}(S\times S)$ of prewellorders over $S$. For $\prec\in\mathcal{W}$, let $\lvert\prec\rvert$ be the supremum of the range of $\prec$-rank $\rho_\prec(x)$ recursively defined by $\rho_\prec(x) = \sup\{\rho_\prec(y)+1\mid y \prec x\}$. 
    Then let us take $\theta = \sup\{\lvert\prec\rvert : {\prec}\in\mathcal{W}\}$.
    For a given $f\colon S\to\alpha$, if we define a binary relation $\prec$ over $S$ by
    \begin{equation*}
        x \prec y \iff f(x)<f(y),
    \end{equation*}
    then ${\prec}\in \mathcal{W}$ and $\lvert\prec\rvert = \alpha$. Hence $\theta(S)\le \theta$. In fact, the reader can see that $\theta(S)=\theta$, which is unnecessary for the proof.
    
    For the remaining part, suppose there is a surjection $f\colon S\to M$. The transitivity of $M$ implies $\rank \colon M\to \rank(M)$ is onto, so $\rank \circ f\colon S\to \rank(M)$ is a surjection. Hence $\rank M < \theta(S)$.
\end{proof}

\begin{proposition} \label{Proposition: Properties of H(X)}
    Let $X$ be a non-empty set.
    \begin{enumerate}
        \item $H(X)$ is a transitive set.
        
        \item If $b\subseteq \trcl(a)$ and $a\in H(X)$, then $b\in H(X)$. 

        \item $H(X)$ is closed under subsets of its elements and union.
        
        \item Suppose that $X$ is closed under disjoint unions in the sense that there is a function $\sqcup\colon X\times X\to X$ such that for each $a,b\in X$, there are injective maps $\iota^0_{a,b}\colon a\to a\sqcup b$ and $\iota^1_{a,b}\colon b\to a\sqcup b$ satisfying $\ran \iota^0_{a,b}\cap \ran \iota^1_{a,b}=\varnothing$ and $\ran \iota^0_{a,b}\cup \ran \iota^1_{a,b} = a\sqcup b$.
        If we also have $1 = \{0\}\in X$, then $H(X)$ is closed under $a,b\mapsto \{a,b\}$.
    \end{enumerate}
\end{proposition}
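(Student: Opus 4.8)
The plan is to route all four parts through one construction. Call a point $e$ a \emph{spare point} for a set $b$ relative to a transitive set $M$ if $e\in M\setminus\trcl(b)$. The core claim I would isolate first is: if $M$ is a transitive surjective image of some $S\in X$, if $b\subseteq M$, and if $b$ has a spare point relative to $M$, then $b\in H(X)$. To see this, fix a surjection $g\colon S\to M$ and define $h\colon S\to\trcl(\{b\})$ by $h(s)=g(s)$ when $g(s)\in\trcl(b)$ and $h(s)=b$ otherwise. Because $b\subseteq M$ and $M$ is transitive we have $\trcl(b)\subseteq M$, so $h$ covers $\trcl(b)$; the spare point supplies an $s$ with $g(s)\notin\trcl(b)$, so $h$ also hits $b$. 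Thus $\trcl(\{b\})$ is a transitive image of $S$ containing $b$, giving $b\in H(X)$. The standing source of spare points is Foundation: for any $c\in M$ and any $b\subseteq c$ we have $\trcl(b)\subseteq\trcl(c)$ and $c\notin\trcl(c)$, so $c$ itself is spare for $b$.

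With this in hand I would first settle (1). Transitivity is immediate, since $H(X)$ is a union of transitive sets. For set-hood I invoke \autoref{Lemma: Transitive image of S is bounded by theta S}, by which every transitive image $M$ of a given $S\in X$ satisfies $\rank M<\theta(S)$. Since $X$ is a set, Replacement produces the ordinal $\theta^{*}=\sup\{\theta(S)\mid S\in X\}$, and then every such $M$ satisfies $\rank M<\theta^{*}$, i.e.\ $M\in V_{\theta^{*}}$. Hence the collection $\mathcal{M}$ of all transitive images of members of $X$ is a subset of $V_{\theta^{*}}$ and is a set by Separation, so $H(X)=\bigcup\mathcal{M}$ is a set by Union.

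Parts (2) and (3) are then direct specializations. For (2), from $\trcl(a)\in H(X)$ I fix a transitive image $M$ of some $S\in X$ with $\trcl(a)\in M$; since $\trcl(a)$ is transitive, $b\subseteq\trcl(a)\subseteq M$, and $\trcl(a)$ is itself a spare point for $b$ (as $\trcl(b)\subseteq\trcl(a)$ and $\trcl(a)\notin\trcl(a)$), so $b\in H(X)$. For (3), closure under subsets of elements is the same argument with a witness $c\in M$ for $c\in H(X)$: if $b\subseteq c$ then $b\subseteq M$ and $c$ is spare, so $b\in H(X)$. Closure under union is obtained by applying the construction to $\bigcup c$, which satisfies $\bigcup c\subseteq\trcl(c)\subseteq M$ and again admits the spare point $c$.

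The hard part is (4), where the spare-point method fails on its own: if $a,b\in H(X)$ are witnessed by the minimal transitive images $M_a=\trcl(\{a\})$ and $M_b=\trcl(\{b\})$, then $M_a\cup M_b\subseteq\trcl(\{a,b\})$ contains no point that could be mapped onto $\{a,b\}$. This is precisely where the disjoint-union operation and the hypothesis $1\in X$ enter: they manufacture the missing extra point. I would take $S=(S_a\sqcup S_b)\sqcup 1$, which lies in $X$, and build a surjection onto the transitive set $N=\{\{a,b\}\}\cup M_a\cup M_b$ by using the given injections to run the surjection $S_a\to M_a$ on the first summand, the surjection $S_b\to M_b$ on the second, and to send the unique element contributed by $1$ to $\{a,b\}$; disjointness of the three ranges makes this well defined and onto. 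Since $N$ is transitive and $\{a,b\}\in N$, this yields $\{a,b\}\in H(X)$, completing the proposition.
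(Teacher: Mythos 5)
Your proposal is correct and follows essentially the same route as the paper: part (1) via the $\theta(S)$ bound and the union of transitive sets, parts (2)--(3) by redirecting one point of a given surjection $S\to M$ so that it hits $b$ and thereby covers $\trcl(\{b\})$, and part (4) by the same three-piece surjection out of $(S_0\sqcup S_1)\sqcup 1$. Your ``spare point'' lemma is just a clean repackaging of the construction the paper carries out inside the proof of (2) (the paper redirects the preimage of $a$, you redirect any point of $M\setminus\trcl(b)$, with Foundation guaranteeing one exists), so the content is the same.
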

\begin{proof}
    \begin{enumerate}
        \item By \autoref{Lemma: Transitive image of S is bounded by theta S}, we have $ H(X) \subseteq V_{\sup\{\theta(S)\mid S\in X\}}$. $H(X)$ is a union of transitive sets, so it is also transitive. 
        
        \item Suppose that $a\in M$ for some transitive $M$ such that $M$ is a surjective image of $S\in X$. Fix $b\subseteq\trcl(a)$, a surjection $f\colon S\to M$, and $x_0\in S$ such that $f(x_0)=a$. Observe that the transitivity of $M$ implies $\trcl(\{a\})\subseteq M$, so $\trcl (\{b\}) \subseteq \ran f=M$.
        Then define $g\colon S\to \trcl(\{b\})$ as follows: 
        \begin{equation*}
            g(x) =
            \begin{cases}
                f(x) & \text{if } f(x)\in\trcl(b),\\
                b & \text{if } x=x_0,\\
                0 & \text{otherwise}.
            \end{cases}
        \end{equation*}
        Then the image of $g$ is $\trcl(\{b\})$, so $b\in \trcl(\{b\})\subseteq H(X)$.
        
        \item Clearly if $b\subseteq a\in H(X)$ then $b\subseteq \trcl(a)$, so $b\in H(X)$. Also, $\bigcup a\subseteq \trcl(a)$ implies $\bigcup a\in H(X)$.
        
        \item Let $a,b\in H(X)$, $a\in M_0$, $b\in M_1$ for some transitive sets $M_0$ and $M_1$ such that there are surjections $f_0\colon S_0\to M_0$, $f_1\colon S_1\to M_1$ for some $S_0,S_1\in X$.
        By the assumption, $(S_0\sqcup S_1)\sqcup \{0\}\in X$.
        Let us define the surjection $g\colon (S_0\sqcup S_1)\sqcup \{1\} \to \trcl(\{\{a,b\}\})$ as follows:
        \begin{equation*}
            g(y) = 
            \begin{cases}
                f_0(x) & \text{if } y = \iota^0_{S_0\sqcup S_1, 1}(\iota^0_{S_0,S_1}(x)) \text{ for some }x\in S_0,\\
                f_1(x) & \text{if } y = \iota^0_{S_0\sqcup S_1, 1}(\iota^1_{S_0,S_1}(x)) \text{ for some }x\in S_0\\
                \{a,b\} & \text{if }y=\iota^1_{S_0\sqcup S_1, 1}(0), \\
                0 & \text{otherwise}.
            \end{cases}
        \end{equation*}
        Thus $\trcl(\{\{a,b\}\})$ is an image $g$ whose domain is a member of $X$. Hence $\{a,b\}\in \trcl(\{\{a,b\}\}) \subseteq H(X)$.
        \qedhere 
    \end{enumerate}
\end{proof}

The following is immediate from the previous proposition:
\begin{corollary} \label{Corollary: H(X) models SO Zermelo} \pushQED{\qed}
    Suppose that $X$ is closed under disjoint unions and $1\in X$. Then $H(X)$ is a model of Second-order Zermelo set theory without Powerset and Choice.
    That is, $H(X)$ satisfies Extensionality, Foundation, Pairing, Union, Infinity, and the Second-order Separation.
    \qedhere 
\end{corollary}

We do not know if $H(X)$ satisfies Second-order Replacement or Collection in general: Working in $\ZFC$, $M = H(\{\omega_n\mid n<\omega\}) = \{x : \lvert\trcl(x)\rvert < \omega_\omega\}$ thinks each $\omega_n$ is in $M$, but $\lag \omega_n\mid n<\omega\rag\notin M$.
In a choiceless context, even if $H(X)$ satisfies Second-order Replacement, it may not satisfy Second-order Collection:

\begin{example}
    Let us work over $\ZF$ + ``$\mathbb{R}$ is a countable union of countable sets.'' 
    \cite[Proposition 4.2]{LubarskyRathjen2003} proved that $H^+(\omega+1)$, which is the set of hereditarily countable sets, does not satisfy Second-order Collection even though it satisfies Second-order Replacement.
    Now we claim that $H(H^+(\omega+1)) = H^+(\omega+1)$:
    First, observe that every member of $H^+(\omega+1)$ is an image of $\omega$, and $H^+(\omega+1)$ is a transitive model of Second-order Zermelo set theory. Hence $H(H^+(\omega+1))\supseteq H^+(\omega+1)$. 
    Now suppose that $M$ is a transitive set and $f\colon S\to M$ for some $S\in H^+(\omega+1)$. Since there is an onto map from $\omega$ to $S$, we have a surjective map $h\colon\omega\to M$.
    Then we prove by induction on $a\in M$ that $a\in H^+(\omega+1)$.
    Suppose that $a\in M$ and $a\subseteq H^+(\omega+1)$. Fix $x_0\in a$, and define
    \begin{equation*}
        g(x) = 
        \begin{cases}
            h(x) & h(x)\in a, \\
            x_0 & \text{otherwise.}
        \end{cases}
    \end{equation*}
    Then $g\colon \omega\to a$ is onto, so $a\in H^+(\omega+1)$.
\end{example}
Second-order Collection over $H(X)$ usually follows from choice-like principles. We will see a case when $H(X)$ satisfies Second-order Collection in \autoref{Section: Schlutzenberg model}.

\subsection{Flat pairing, flat finite sequences, and flat product}
The usual Kuratowski ordered pair raises the rank by 2, that is, the rank of $\lag a,b\rag := \{\{a\},\{a,b\}\}$ is $\max(\rank a,\rank b)+2$. That is, $a,b\in V_\lambda$ does not guarantee $\lag a,b\rag\in V_\lambda$ unless $\lambda$ is limit. For our arguments, we will need
ordered pairs and finite tuples of elements of $V_{\lambda+2}$ inside $V_{\lambda+2}$.
In this subsection, we introduce a variant of the Quine-Rosser ordered pair such that every $V_\lambda$ for $\lambda>\omega$ is closed under its formation.

Let $s$ be a function defined by $s(x)=2x+1$ if $x\in\omega$, and $s(x)=x$ otherwise. Then define $f_0,f_1\colon V\to V$ by $f_0(x) = s^"[x]$, $f_1(x)=s^"[x]\cup\{0\}$. We can observe that $f_0$ and $f_1$ are injective, and $f_0^"[a]\cap f_1^"[b]=0$ for every $a$ and $b$, so we can code a pair of $a$ and $b$ as a single set by taking the union of $f_0^"[a]$ and $f_1^"[b]$: 
\begin{definition}
    For two sets $a$ and $b$, define $\lceil a,b\rceil = f_0^"[a]\cup f_1^"[b]$.
\end{definition}

Then the following is immediate:
\begin{lemma} \label{Lemma: flat pairing properties}
    Let $\lambda\ge \omega$ be an ordinal.
    \begin{enumerate}
        \item $a,b\mapsto \lceil a,b\rceil$ is an injection from $V_\lambda\times V_\lambda$ to $V_\lambda$. 
        \item The function $a,b\mapsto \lceil a,b\rceil$ and its projection functions are all $\Delta_0$-definable over $V_\lambda$ with parameter $\omega$.
        \item $\rank \lceil a,b\rceil$ is either finite or $\rank \lceil a,b\rceil \le \max(\rank a,\rank b)$. Especially, if $\rank a,\rank b<\lambda$ then $\rank \lceil a,b\rceil < \lambda$.
    \end{enumerate}
\end{lemma}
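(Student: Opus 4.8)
The plan is to verify the three clauses in order, extracting from the definition of $s$ the two structural features that do all the work: $s$ is injective with $0\notin\Ran(s)$, and $s$ is the identity off $\omega$ while mapping $\omega$ into $\omega$.

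First I would record the basic facts about the coding maps. Since $s$ is injective, the pointwise-image maps $f_0$ and $f_1$ are injective; since $s(x)\ge 1$ for $x\in\omega$ and $s(x)=x\notin\omega$ for $x\notin\omega$, we have $0\notin s^"[x]$ for every $x$, so $\{0\}$ serves as a marker distinguishing the $f_0$- and $f_1$-blocks: every element of $f_0^"[a]$ omits $0$ while every element of $f_1^"[b]$ contains it. This immediately gives $f_0^"[a]\cap f_1^"[b]=\varnothing$. For clause (1) I would then recover $a$ and $b$ from $p=\lceil a,b\rceil$ by splitting $p$ into $P_0=\{w\in p: 0\notin w\}=f_0^"[a]$ and $P_1=\{w\in p:0\in w\}=f_1^"[b]$ and inverting $f_0$, $f_1$ pointwise (possible since $s$ is injective); this yields both injectivity and the explicit projection formulas. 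That $\lceil a,b\rceil$ lands in $V_\lambda$ I would defer to clause (3).

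Clause (3) is the heart of the matter, and the one place requiring real care. The key lemma is that for every set $y$, $\rank(s^"[y])$ is either a natural number or equal to $\rank(y)$. To see this, split $y$ into its finite-rank part $F$ and its infinite-rank part $I$. On $I$ the map $s$ is the identity, so those elements contribute $\{\rank(v)+1: v\in I\}$ to both $\rank(s^"[y])$ and $\rank(y)$ unchanged; on $F$ the values $\rank(s(v))$ stay below $\omega$ (whether $v\in\omega$, where $\rank(s(v))=2v+1$, or $v$ is merely of finite rank, where $s(v)=v$). Hence if $I\ne\varnothing$ the infinite-rank contributions dominate on both sides and $\rank(s^"[y])=\rank(y)$, while if $I=\varnothing$ then $y\in V_\omega$ and $\rank(s^"[y])\le\omega$ is finite or equal to $\rank(y)$. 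The same conclusion holds for $f_1(y)=s^"[y]\cup\{0\}$ since adjoining $\{0\}$ only raises the rank to $\max(\rank(s^"[y]),1)$. Feeding this back, each element of $\lceil a,b\rceil$ has rank $+1$ either finite or at most $\rank a$ (for the $f_0$-block) or at most $\rank b$ (for the $f_1$-block); taking the supremum, when $\max(\rank a,\rank b)\ge\omega$ the finite contributions are absorbed and $\rank\lceil a,b\rceil\le\max(\rank a,\rank b)$, while when $\max(\rank a,\rank b)<\omega$ the whole set is hereditarily finite and $\rank\lceil a,b\rceil<\omega$. With $\lambda\ge\omega$ this also gives $\lceil a,b\rceil\in V_\lambda$, completing clause (1).

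For clause (2) I would observe that $\lceil\cdot,\cdot\rceil$ and the two projections are defined by bounded quantifiers over the blocks $P_0,P_1$ and over the pointwise action of $s$ and its inverse, so the whole matter reduces to checking that the graph of $s$ is $\Delta_0$ with parameter $\omega$. Off $\omega$ this is just ``$v\notin\omega\wedge u=v$'', and on $\omega$ it reduces to the $\Delta_0$-definability of the graph of $x\mapsto 2x+1$ on $\omega$, i.e.\ of addition and multiplication on $\omega$, which is standard. I expect this last verification, rather than anything conceptual, to be the fiddliest point, whereas the genuinely delicate step is the rank computation in clause (3): the whole construction works precisely because $s$ perturbs only finite ranks, and only within the finite ranks.
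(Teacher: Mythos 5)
Your proposal is correct and follows essentially the same route as the paper: injectivity via the disjointness of the $f_0$- and $f_1$-blocks (the $\{0\}$ marker), $\Delta_0$-definability reduced to the graph of $s$ with parameter $\omega$, and the rank bound via the observation that $s$ perturbs only finite ranks. Your finite-rank/infinite-rank decomposition of $y$ is just a reorganization of the paper's three-way case split on $\rank a$, so there is nothing substantive to add.
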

\begin{proof}
    \begin{enumerate}
        \item The injectivity follows from that of $f_0$ and $f_1$ and $f_0^"[a]\cap f_1^"[b]=0$ for every $a$ and $b$.
        
        \item Verifying the claim follows from writing down lots of $\Delta_0$ definitions for the functions we used to define the flat pairing.
        First, $s(x)=y$ has the following $\Delta_0$-representation with parameter $\omega$:
        \begin{equation*}
            s(x)=y \iff (x\in\omega\to y=2x+1) \lor (x\notin \omega\to y=x)
        \end{equation*}
        Similarly, we can state $s(x)\in y$ into a $\Delta_0$ formula with parameter $\omega$.
        
        Also, we have
        \begin{itemize}
            \item $b = f_0(a) \iff \forall x\in a [s(x)\in b]\land \forall x\in b \exists y\in a [y=s(x)]$.
            \item $b = f_1(a) \iff \forall x\in a [s(x)\in b]\land 0\in b\land \forall x\in b [x=0\lor \exists y\in a (y=s(x))]$.
            \item $f_0(a)\in b \iff \exists y\in b [y=f_0(a)]$.
            \item $f_1(a)\in b \iff \exists y\in b [y=f_1(a)]$.
        \end{itemize}
        Now let us state the $\Delta_0$-expression for $c=\lceil a,b\rceil$. It breaks into two parts: An expression for $c\subseteq \lceil a,b\rceil$ and an expression for $\lceil a,b\rceil\subseteq c$:
        \begin{itemize}
            \item $c\subseteq\lceil a,b\rceil \iff \forall z\in c [\exists x\in a(z=f_0(x))\lor \exists y\in b (z=f_1(y))]$,
            \item $\lceil a,b\rceil\subseteq c \iff \forall x\in a (f_0(x)\in c)\land \forall y\in b (f_1(y)\in c)$.
        \end{itemize}
        This shows the $\Delta_0$-definability of $a,b\mapsto\lceil a,b\rceil$. Also, we can see that $f_0^{-1}[\lceil a,b\rceil]=a$, $f_1^{-1}[\lceil a,b\rceil]=b$, so inverse images under $f_i$ work as projection functions. Since $f_i$ is $\Delta_0$-definable with parameter $\omega$, so is the map $a\mapsto f_i^{-1}[a]$.
        
        \item First, observe that $\rank s(x)$ is either finite or equal to $\rank x$. Now let us compute a bound for $\rank f_0(a)$: If $\rank a$ is finite, then $\rank s(x)$ for $x\in a$ is also finite, so
        \begin{equation*}
            \rank f_0(a) = \sup\{\rank s(x)+1\mid x\in a\}
        \end{equation*}
        is also finite. If $\rank a$ is infinite, then either $\rank a = \omega$ or there is $x\in a$ such that $\rank x \ge\omega$. In the former case, we have
        \begin{equation*}
            \rank f_0(a) \le \sup\{2\rank x+1\mid x\in a\}\le\omega
        \end{equation*}
        since $\rank x$ is finite for $x\in a$. In the latter case, we have
        \begin{align*}
            \rank f_0(a) &= \sup\{\rank s(x)+1\mid x\in a\land \rank x\ge\omega\} \\
            &=\sup\{\rank x+1\mid x\in a\land \rank x\ge\omega\} \\
            & = \rank a
        \end{align*}
        since $\rank a>\omega$ and $s(x)=x$ if $\rank x\ge\omega$. Hence $\rank f_0(a)$ is either finite or bounded by $\rank a$. Similarly, we can see that $\rank f_1(a)$ is either finite or bounded by $\rank a$.

        Now let us bound $\rank \lceil a,b\rceil$. By a similar argument to the one we gave, we can prove that $\rank f_0^"[a]$ is either finite or at most $\rank a$. Similarly, $\rank f_1^"[b]$ is either finite or at most $\rank b$.
        Hence the rank of $\lceil a,b\rceil = f_0^"[a]\cup f_1^"[b]$ is either finite or at most $\max(\rank a,\rank b)$. \qedhere 
    \end{enumerate}
\end{proof}

We can use the flat pairing function to define a cartesian product of two sets:

\begin{definition}
    For two sets $A$ and $B$, define
    \begin{equation*}
        A \otimes B = \{\lceil x,y\rceil \mid x\in A\land y\in B\}.
    \end{equation*}
\end{definition}
With the rank computation, we have the following:
\begin{lemma} \label{Lemma: Flat product closure}
    Let $\lambda>\omega$ be an ordinal. If $A,B\in V_\lambda$, then $A\otimes B\in V_\lambda$.
\end{lemma}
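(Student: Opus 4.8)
The plan is to reduce the statement to a rank computation and then invoke \autoref{Lemma: flat pairing properties}(3). Since membership in $V_\lambda$ is controlled by rank, it suffices to exhibit a single ordinal $\gamma<\lambda$ with $A\otimes B\subseteq V_\gamma$; this gives $\rank(A\otimes B)\le\gamma<\lambda$ and hence $A\otimes B\in V_\lambda$.

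First I would fix $\alpha=\rank A$ and $\beta=\rank B$, both strictly below $\lambda$ because $A,B\in V_\lambda$. Every $x\in A$ then satisfies $\rank x<\alpha$ and every $y\in B$ satisfies $\rank y<\beta$, so $\max(\rank x,\rank y)<\max(\alpha,\beta)$ for each pair $x\in A$, $y\in B$. Applying \autoref{Lemma: flat pairing properties}(3) to each such pair, $\rank\lceil x,y\rceil$ is either finite or at most $\max(\rank x,\rank y)$, and hence in either case strictly below $\gamma:=\max(\alpha,\beta,\omega)$. The hypothesis $\lambda>\omega$ guarantees $\gamma<\lambda$, so $A\otimes B=\{\lceil x,y\rceil\mid x\in A,\ y\in B\}\subseteq V_\gamma$, and we are done.

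The one point that needs care — and the reason the argument is not completely trivial — is that merely knowing each element $\lceil x,y\rceil$ of $A\otimes B$ has rank below $\lambda$ would \emph{not} by itself yield $A\otimes B\in V_\lambda$ when $\lambda$ is a limit ordinal, since the ranks of the elements could a priori be cofinal in $\lambda$ and force $\rank(A\otimes B)=\lambda$. What rescues the argument is that \autoref{Lemma: flat pairing properties}(3) bounds $\rank\lceil x,y\rceil$ in terms of $\rank x$ and $\rank y$ alone, and these are themselves uniformly bounded below $\lambda$ by $\max(\alpha,\beta)$; this is exactly what lets us collapse the pointwise bounds into the single ordinal $\gamma<\lambda$. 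The role of the hypothesis $\lambda>\omega$ is only to absorb the finite-rank case of the flat pairing into $\gamma$.
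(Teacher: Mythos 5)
Your proof is correct and rests on the same key fact as the paper's, namely the rank bound in \autoref{Lemma: flat pairing properties}(3). The only difference is organizational: the paper splits into the limit case (dismissed as clear) and the successor case $\lambda=\alpha+1$, where it observes $x,y\in V_\alpha$ forces $\lceil x,y\rceil\in V_\alpha$, whereas you give a single uniform bound $\gamma=\max(\rank A,\rank B,\omega)<\lambda$ that handles both cases at once.
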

\begin{proof}
    The lemma is clear when $\lambda$ is limit, so let us consider the case $\lambda=\alpha+1$. Suppose that $A,B\in V_{\alpha+1}$, then $x\in A$ and $y\in B$ imply $x,y\in V_\alpha$. Hence, by \autoref{Lemma: flat pairing properties}, we have $\lceil x,y\rceil \in V_\alpha$. Hence we have
    \begin{equation*}
        A\otimes B =\{\lceil x,y\rceil \in V_\alpha \mid x\in A\land y\in B\} \in V_{\alpha+1}. \qedhere
    \end{equation*}
\end{proof}

We can view binary relations between $A$ and $B$ as subsets of $A\otimes B$ and develop their properties \emph{mutatis mutandis}. However, subsets of $A\otimes B$ and that of $A\times B$ are technically different, so we need different notations for notions about binary relations as subsets of $A\otimes B$. 

\begin{definition}
    $\Dom a = \{x\mid \exists y \lceil x,y\rceil\in a\}$. $\Ran a = \{y\mid \exists x \lceil x,y\rceil\in a\}$. 
\end{definition}

One can also ask if we can define a finite tuple operator under which $V_\lambda$ is closed for every $\lambda>\omega$. A traditional way to define a finite sequence $\lag a_0,\cdots, a_{n-1}\rag$ is by viewing it as a function $i\mapsto a_i$, which is a subset of $n\times \{a_0,\cdots a_{n-1}\}$. If we use Kuratowski ordered pair to formulate $\lag a_0,\cdots, a_{n-1}\rag$, we have
\begin{equation*}
    \rank \lag a_0,\cdots a_{n-1}\rag \le (\max_{i<n}\rank a_i)+3.
\end{equation*}
That is, the operator $a_0,\cdots, a_{n-1}\mapsto \lag a_0,\cdots, a_{n-1}\rag$ may raise the rank by 3. The situation is not dramatically improved even if we use Quine-Rosser ordered pairing as long as we stick to `finite sequences as functions.' We resolve this situation by expanding the definition of $\lceil \cdot,\cdot\rceil$.

Let $s$ be the function we previously mentioned when we defined $\lceil \cdot,\cdot\rceil$. For each $n<\omega$, define $f_n(a) = s^"[a]\cup \{2i \mid i<n\}$. We can also see that $f_n^"[x]$ and $f_m^"[y]$ are mutually disjoint for $n\neq m$, and each $f_n$ is injective. This allows us to code a finite sequence of sets by a single set:
\begin{definition}
    For sets $x_0,\cdots, x_{n-1}$, define $\lceil x_0,\cdots, x_{n-1}\rceil = \bigcup_{k<n} f_k^"[x_k]$.
\end{definition}

By a similar argument we provided in \autoref{Lemma: flat pairing properties}, we get the following:
\begin{lemma} \pushQED{\qed} Let $\lambda>\omega$ be an ordinal.
    \begin{enumerate}
        \item $x_0,\cdots, x_{n-1}\mapsto \lceil x_0,\cdots, x_{n-1}\rceil$ is injective. 
        \item The function $x_0,\cdots, x_{n-1}\mapsto \lceil x_0,\cdots, x_{n-1}\rceil$ and the projection functions are $\Delta_0$-definable over $V_\lambda$ for all $\lambda>\omega$ with parameter $\omega$.
        \item $\rank \lceil x_0,\cdots x_{n-1}\rceil$ is either finite or $\rank \lceil x_0,\cdots x_{n-1}\rceil\le \max(\rank x_0,\cdots,\rank x_{n-1})$.
        \item If $\rank x_0,\cdots, \rank x_{n-1}<\lambda$, then $\rank \lceil x_0,\cdots, x_{n-1}\rceil< \lambda$. \qedhere 
    \end{enumerate}
\end{lemma}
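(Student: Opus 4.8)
The plan is to follow the proof of \autoref{Lemma: flat pairing properties} almost verbatim, the only genuinely new ingredient being a bookkeeping device that recovers, from any single element of $\lceil x_0,\cdots,x_{n-1}\rceil$, the index $k$ of the component it came from. The key observation is that for $y\in x_k$ the element $f_k(y)=s^"[y]\cup\{2i\mid i<k\}$ contains \emph{exactly} $k$ even natural numbers: the shift $s$ sends naturals to odd naturals and fixes everything else, so $s^"[y]$ contributes no even naturals, while the markers $\{2i\mid i<k\}$ contribute precisely $k$ of them. Consequently the map sending $z$ to the number of even naturals in $z$ is constantly $k$ on $f_k^"[x_k]$, which both yields the disjointness $f_n^"[x]\cap f_m^"[y]=\varnothing$ for $n\neq m$ already asserted before the statement, and lets us isolate each $f_k^"[x_k]$ inside $\lceil x_0,\cdots,x_{n-1}\rceil$.

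For (1), I would fix the arity $n$ and suppose $\lceil x_0,\cdots,x_{n-1}\rceil=\lceil x_0',\cdots,x_{n-1}'\rceil$. For each $k<n$, intersecting both sides with the collection of elements having exactly $k$ even naturals gives $f_k^"[x_k]=f_k^"[x_k']$; since each $f_k$ is injective (from $f_k(y)$ one removes the even naturals to obtain $s^"[y]$ and applies $s^{-1}$ elementwise to recover $y$), we conclude $x_k=x_k'$, and the same recipe defines the projection $\lceil x_0,\cdots,x_{n-1}\rceil\mapsto x_k$. For (2), I would reuse the $\Delta_0$ formulas for $s(x)=y$ and $s(x)\in y$ from \autoref{Lemma: flat pairing properties} and add, for each fixed $k$, the $\Delta_0$ formula with parameter $\omega$ expressing ``$z$ has exactly $k$ even naturals''; since $n$ is a fixed standard natural number, the expressions $b=f_k(a)$, $c=\lceil x_0,\cdots,x_{n-1}\rceil$ and the projections all unfold into finite conjunctions of such formulas, exactly as in the binary case.

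Parts (3) and (4) are pure rank arithmetic. The marker set $\{2i\mid i<k\}$ has finite rank, and the computation of \autoref{Lemma: flat pairing properties}(3) shows $\rank s^"[a]$, hence $\rank f_k(a)$, is either finite or at most $\rank a$; thus $\rank f_k^"[x_k]$ is either finite or at most $\rank x_k$, and since $\lceil x_0,\cdots,x_{n-1}\rceil=\bigcup_{k<n}f_k^"[x_k]$ is a union of finitely many sets, its rank is the maximum of the finitely many $\rank f_k^"[x_k]$, giving (3); (4) follows because a finite maximum of ordinals below $\lambda$ stays below $\lambda$ once $\lambda>\omega$ (which also keeps the finite case below $\lambda$). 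The only point requiring care, and the closest thing to an obstacle, is the fixed-arity caveat in (1): injectivity can fail across \emph{different} arities, since appending empty components leaves the code unchanged as $f_k^"[\varnothing]=\varnothing$, so the statement must be read as injectivity of each $n$-ary operation separately, which is precisely how the binary instance in \autoref{Lemma: flat pairing properties} is used.
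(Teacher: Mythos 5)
Your proposal is correct and fills in exactly the ``similar argument'' that the paper leaves implicit: the even-natural markers $\{2i\mid i<k\}$ isolate each block $f_k^"[x_k]$, and the injectivity, $\Delta_0$-definability, and rank computations then reduce to the binary case of \autoref{Lemma: flat pairing properties}. Your caveat that injectivity only holds for each fixed arity (since $f_k^"[\varnothing]=\varnothing$) is a legitimate and correctly handled subtlety, consistent with how the lemma is used in the paper.
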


\begin{definition}
    For a set $A$, let $\Fin(A)$ be the set of all flat finite tuples $\lceil a_0,\cdots,a_n\rceil$ such that $a_0,\cdots,a_n\in A$.
\end{definition}

A similar argument for the proof of \autoref{Lemma: Flat product closure} shows the following:
\begin{lemma}\pushQED{\qed}
    Let $\lambda>\omega$ be an ordinal and $A\in V_\lambda$. Then $\Fin(A)\in V_\lambda$.
    \qedhere 
\end{lemma}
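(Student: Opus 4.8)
The plan is to mirror the proof of \autoref{Lemma: Flat product closure}, reducing everything to the rank computation established in the preceding lemma. As there, I would split into the case where $\lambda$ is a limit ordinal and the case where $\lambda = \alpha + 1$ is a successor.

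When $\lambda$ is a limit ordinal, $A \in V_\lambda$ gives $A \subseteq V_\beta$ for some $\beta < \lambda$, so every $a \in A$ satisfies $\rank a < \beta$. By the rank bound of the preceding lemma, any flat tuple $\lceil a_0,\cdots,a_{n-1}\rceil$ with entries in $A$ has rank either finite or at most $\max(\rank a_0,\cdots,\rank a_{n-1}) < \beta$; since $\lambda > \omega$ is a limit, both possibilities keep the rank strictly below $\lambda$. Hence $\Fin(A) \subseteq V_{\max(\beta,\omega)}$ with $\max(\beta,\omega) < \lambda$, and therefore $\Fin(A) \in V_\lambda$.

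The successor case $\lambda = \alpha+1$ is the one requiring the rank lemma in earnest. Here $A \subseteq V_\alpha$, so $\rank a < \alpha$ for all $a \in A$, and because $\lambda > \omega$ forces $\alpha \ge \omega$, the \emph{finite} possibility for $\rank \lceil a_0,\cdots,a_{n-1}\rceil$ is also bounded by $\alpha$. The crucial observation is that the bound $\max(\rank a_0,\cdots,\rank a_{n-1}) \le \sup\{\rank a \mid a\in A\} < \alpha$ does not depend on the length $n$ of the tuple, so the estimate is uniform across all finite sequences drawn from $A$. Consequently every flat tuple lies in $V_\alpha$, giving $\Fin(A) \subseteq V_\alpha$ and therefore $\Fin(A) \in V_{\alpha+1} = V_\lambda$.

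I expect no genuine obstacle beyond this uniformity point: the only thing one must be careful about is that $\Fin(A)$ ranges over tuples of \emph{all} finite lengths simultaneously, but since the rank bound is controlled by $A$ alone and is independent of $n$, Separation applied inside $V_\alpha$ (respectively inside $V_{\max(\beta,\omega)}$) yields $\Fin(A)$ as a set of the required rank.
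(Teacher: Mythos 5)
Your proof is correct and follows exactly the route the paper intends: the paper omits the proof, stating only that "a similar argument for the proof of Lemma \ref{Lemma: Flat product closure} shows the following," and your case split on limit versus successor $\lambda$ together with the length-independent rank bound from the preceding lemma is precisely that argument. No issues.
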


\begin{lemma}
    There is a $\Delta_0$-definable function $\lh$ with parameter $\omega$ such that $\lh\lceil a_0,\cdots, a_{n-1}\rceil = n$ for every $a_0,\cdots,a_{n-1}$ and $n$.
\end{lemma}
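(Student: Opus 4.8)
The plan is to recover $n$ by reading off, from each element of the code, the index of the component that produced it, and then returning one more than the largest index that occurs. The fact making this possible is purely arithmetical: for any set $a$, the image $s^"[a]$ contains no even natural number, since $s$ sends each finite ordinal $m$ to the odd number $2m+1$ and fixes every set that is not a finite ordinal, whereas every even natural number \emph{is} a finite ordinal. Hence for each $k<\omega$ and each $b$ the even natural numbers in $f_k(b)=s^"[b]\cup\{2i\mid i<k\}$ are exactly $\{2i\mid i<k\}$, so $k$ is the least $m<\omega$ with $2m\notin f_k(b)$.

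Accordingly I would introduce the auxiliary index function $\iota(y)$, defined as the least $m<\omega$ with $2m\notin y$, which by the previous paragraph satisfies $\iota(f_k(b))=k$ for every $b$. Both ``$z$ is an even natural number'' and ``$\iota(y)=m$'' are $\Delta_0$ with parameter $\omega$: the former because arithmetic on $\omega$ is (as already used in \autoref{Lemma: flat pairing properties}), and the latter because it unfolds to the bounded formula $2m\notin y\land\forall i\in m\,(2i\in y)$.

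I would then define $\lh$ by letting $\lh(t)=n$ abbreviate
\[
\lh(t)=n \iff (t=\varnothing\land n=0)\lor\Bigl[\,\forall y\in t\,\exists m\in n\,(2m\notin y)\ \land\ \exists y\in t\,\exists m\in n\,(m+1=n\land 2m\notin y\land\forall i\in m\,(2i\in y))\,\Bigr].
\]
Every quantifier here is bounded by $t$, by $n$, or by the parameter $\omega$, so the relation is $\Delta_0$ with parameter $\omega$ and is functional on flat finite tuples. For correctness, write $t=\lceil a_0,\cdots,a_{n-1}\rceil=\bigcup_{k<n}f_k^"[a_k]$: each element of $t$ has the form $f_k(b)$ with $k<n$, so every index occurring is strictly below $n$ (first conjunct), and, provided the last component is nonempty, some element has index exactly $n-1$ (second conjunct); thus $\lh(t)=n$.

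The one place requiring care, more a matter of reading the statement correctly than a genuine obstacle, is the degenerate behaviour of the code on trailing empty components: if $a_{n-1}=\varnothing$ then $f_{n-1}^"[a_{n-1}]=\varnothing$ and the code literally equals $\lceil a_0,\cdots,a_{n-2}\rceil$, so no length function can separate them. This is precisely the caveat under which the flat code is injective at a fixed arity, and with that understanding the displayed definition returns the intended value $n$; the remaining $\Delta_0$ bookkeeping, together with the evenness predicate, is then routine given \autoref{Lemma: flat pairing properties}.
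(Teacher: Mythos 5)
Your proof is correct and follows essentially the same route as the paper's: both recover the length from the observation that the even natural numbers occurring in $f_k(b)=s^"[b]\cup\{2i\mid i<k\}$ are exactly $\{2i\mid i<k\}$, so the arity is one more than the largest index witnessed; the paper phrases this as $\min\{m<\omega\mid\forall y\in\sigma\,(2m\notin y)\}+1$ while you phrase it as ``all indices below $n$ and some index equal to $n-1$,'' which is the same computation. Your explicit remark about trailing empty components is a fair point about the literal statement (the paper's argument tacitly assumes $x_{n-1}\neq\varnothing$ as well), and the only cosmetic difference is that the paper adds an ``otherwise $0$'' clause to make $\lh$ total on all sets.
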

\begin{proof}
    Let us state an idea about how to define the length function first: If we are given a finite tuple $\lceil x_0,\cdots,x_{n-1}\rceil=\bigcup_{k<n} f_k^"[x_k]$, then elements of $\lceil x_0,\cdots, x_{n-1}\rceil$ take the form $f_k(y)$ for some $k<n$ and $y\in x_k$. Also, $f_k(y) = s^"[y]\cup \{2i\mid i<k\}$. That is, every element of $\lceil x_0,\cdots,x_{n-1}\rceil$ takes the form
    \begin{equation*}
        f_k(y) = s^"[y] \cup \{2i \mid i<k\}
    \end{equation*}
    for some $k<n$. Hence we can track the length of $\lceil x_0,\cdots,x_{n-1}\rceil$ by checking the largest even natural number of elements of $\lceil x_0,\cdots,x_{n-1}\rceil$: That is, we have the following:
    \begin{itemize}
        \item $f_{n-1}^"[x_{n-1}]\subseteq \lceil x_0,\cdots,x_{n-1}\rceil$, and $f_{n-1}(y)\supseteq \{0,2,\cdots,2(n-2)\}$.
        \item No elements of $f_k^"[x_k]$ for $k<n$ contain $2(n-1)$ as its element.
    \end{itemize}
    Thus we get the following: For $n\ge 1$,
    \begin{equation*}
        n-1 = \min \{m<\omega\mid \forall y\in \lceil x_0,\cdots x_{n-1}\rceil (2m\notin y)\}.
    \end{equation*}
    Thus, let us define $\lh\sigma$ by
    \begin{equation*}
        \lh(\sigma) = 
        \begin{cases}
            \min\{m<\omega\mid \forall y\in \sigma(2m\notin y)\}+1 & \text{if $\sigma\neq 0$ and the minimum exists},\\
            0 &\text{otherwise.}
        \end{cases}
    \end{equation*}
    It is easy to see that the formula $\lh(\sigma)=n$ is $\Delta_0$ with parameter $\omega$, and the previous argument shows $\lh\lceil x_0,\cdots,x_{n-1}\rceil = n$ for $n\ge 1$. Also, the empty tuple $\lceil \cdot \rceil$ is the empty set, so $\lh\lceil\cdot\rceil = 0$.
\end{proof}

\begin{lemma}
    There is a $\Delta_0$-definable function $\sigma,\tau\mapsto \sigma^\frown \tau$ with parameter $\omega$ satisfying
    \begin{equation*}
        \lceil a_0,\cdots,a_{m-1}\rceil ^\frown \lceil b_0,\cdots, b_{n-1}\rceil =
        \lceil a_0,\cdots,a_{m-1}, b_0,\cdots, b_{n-1}\rceil 
    \end{equation*}
    for all $a_0,a_1,\cdots,a_{m-1},b_0,\cdots,b_{n-1}$.
\end{lemma}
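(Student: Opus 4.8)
The plan is to define the concatenation function $\sigma^\frown\tau$ by first recovering the length $m = \lh(\sigma)$ using the already-established length function, and then "shifting" the entries of $\tau$ by $m$ positions before taking the union with $\sigma$. Recall that $\lceil a_0,\dots,a_{m-1}\rceil = \bigcup_{k<m} f_k^"[a_k]$ where $f_k(y) = s^"[y]\cup\{2i\mid i<k\}$. The point is that an element of $\tau = \lceil b_0,\dots,b_{n-1}\rceil$ belonging to the block $f_j^"[b_j]$ should be moved to the block $f_{m+j}^"[b_j]$ inside the concatenation, since in $\lceil a_0,\dots,a_{m-1},b_0,\dots,b_{n-1}\rceil$ the entry $b_j$ sits at position $m+j$.

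First I would isolate the shift operation on a single coded element. If $w = f_j(y) = s^"[y]\cup\{2i\mid i<j\}$ is an element of $\tau$, I want to produce $f_{m+j}(y) = s^"[y]\cup\{2i\mid i<m+j\}$. The block index $j$ is detectable from $w$ itself: exactly as in the length lemma, $j$ is governed by the largest even number appearing as an element of $w$, so $j$ and the "core" $s^"[y]$ (the odd or infinite-rank part of $w$) are both $\Delta_0$-recoverable with parameter $\omega$. Thus the map $w\mapsto \operatorname{shift}_m(w) := (w\setminus\{2i\mid i<\omega\})\cup\{2i\mid i<m+j\}$ is $\Delta_0$-definable, where $j$ is read off from $w$. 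Then I would set
\begin{equation*}
    \sigma^\frown\tau = \sigma \cup \{\operatorname{shift}_{\lh(\sigma)}(w)\mid w\in\tau\}.
\end{equation*}
Verifying the defining equation is then a direct computation: the elements of $\sigma=\lceil a_0,\dots,a_{m-1}\rceil$ contribute the blocks $f_0,\dots,f_{m-1}$ unchanged, while each $w=f_j(y)\in\tau$ maps to $f_{m+j}(y)$, so the union is exactly $\bigcup_{k<m+n}f_k^"[c_k]$ with $c = (a_0,\dots,a_{m-1},b_0,\dots,b_{n-1})$, which is $\lceil a_0,\dots,a_{m-1},b_0,\dots,b_{n-1}\rceil$ by definition. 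The mutual disjointness of the $f_k^"[\cdot]$ blocks (noted when the tuple operator was introduced) guarantees no overlap corrupts the union.

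For the $\Delta_0$-definability claim, I would assemble the pieces: $\lh$ is $\Delta_0$ with parameter $\omega$ by the previous lemma, and recovering $j$ from $w$ reuses the same "largest even element" trick, so it is $\Delta_0$ as well. The only subtlety is that $\operatorname{shift}_m$ must read the ambient length $m=\lh(\sigma)$, which is a single ordinal parameter passed into the $\Delta_0$ formula defining the set $\{2i\mid i<m+j\}$; since bounded arithmetic on $\omega$ is $\Delta_0$ with parameter $\omega$, adding $m$ and $j$ and forming the resulting finite set of even numbers stays $\Delta_0$. Writing $c=\sigma^\frown\tau$ splits into $c\subseteq\sigma^\frown\tau$ and $\sigma^\frown\tau\subseteq c$ exactly as in the flat pairing lemma, each a bounded quantification over $\sigma$, $\tau$, and $c$.

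**The main obstacle** I expect is bookkeeping rather than conceptual: correctly and uniformly extracting the block index $j$ and the core $s^"[y]$ from a raw coded element $w$ purely by $\Delta_0$ means, and doing so robustly when $y$ itself may contain small natural numbers whose images under $s$ could be confused with the marker set $\{2i\mid i<j\}$. This is precisely why $s(x)=2x+1$ was chosen — it sends naturals to odd numbers, keeping the markers (even numbers) cleanly separated from the encoded content — so the separation goes through, but I would need to state the recovery of $j$ and of $s^"[y]$ carefully to confirm the $\Delta_0$ bound, just as the length lemma did.
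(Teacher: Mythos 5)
Your proposal is correct and is essentially the paper's own argument: the paper likewise reads the block index off an element $w=f_j(y)$ via its even-number markers (using the least $n$ with $2n\notin w$), defines a shift $\varsigma_m(w)=w\cup\{2k\mid k<m+g(w)\}$ (which agrees with your $\operatorname{shift}_m$ since the old markers are contained in the new ones), and sets $\sigma^\frown\tau=\sigma\cup\varsigma_{\lh(\sigma)}^{"}[\tau]$, checking $\Delta_0$-definability by the same two-inclusion split. No gaps.
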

\begin{proof}
    What we need is to turn the set $\lceil b_0,\cdots,b_{n-1}\rceil = f_0^"[b_0]\cup\cdots\cup f_{n-1}^"[b_{n-1}]$ into the set 
    \begin{equation*}
        f_m^"[b_0]\cup\cdots\cup f_{m+n-1}^"[b_{n-1}].
    \end{equation*}
    To do this, let us define a subsidiary function $\varsigma_m(a)$ satisfying
    \begin{equation} \label{Formula: Subsidiary function condition}
        \varsigma_m(s^"[a]\cup\{2k\mid k<n\}) = s^"[a]\cup \{2k\mid k<n+m\}.
    \end{equation}
    We can see that the following function is $\Delta_0$ with parameter $\omega$:
    \begin{equation*}
        g(a) = 
        \begin{cases}
            n & \text{If $n<\omega$ is the least natural number such that }2n\notin a, \\
            0 & \text{If }\forall n<\omega (2n\in a).
        \end{cases}
    \end{equation*}
    Hence $\varsigma_m(a) :=  a\cup \{2k \mid k<m+g(a)\}$ is also $\Delta_0$-definable with parameter $\omega$, and satisfies \eqref{Formula: Subsidiary function condition}. We can see that
    \begin{equation*}
        \varsigma_m^"[f_0^"[b_0]\cup\cdots\cup f_{n-1}^"[b_{n-1}]] =  f_m^"[b_0]\cup\cdots\cup f_{m+n-1}^"[b_{n-1}],
    \end{equation*}
    holds, so we can consider the following definition:
    \begin{equation*}
        \sigma^\frown\tau := \sigma\cup \varsigma_{\lh(\sigma)}^"[\tau].
    \end{equation*}
    Then $\upsilon=\sigma^\frown\tau$ is $\Delta_0$ with parameter $\omega$ since
    \begin{equation*}
        x\in \sigma^\frown\tau \iff x\in\sigma\lor \exists y\in\tau \exists m<\omega [m=\lh(\sigma)\land x=\varsigma_m(y)],
    \end{equation*}
    is $\Delta_0$, so we can express both of $\upsilon\subseteq \sigma^\frown\tau$ and $\sigma^\frown\tau\subseteq\upsilon$ in $\Delta_0$-formulas with parameter $\omega$.
\end{proof}

Finally, note that although $V_\lambda$ does not have every subset of $V_\lambda$, it can code subsets of $V_\lambda$ of small size, by encoding $\{x_i\mid i\in I\}$ into $\{\lag i,y\rag \mid i\in I\land y\in x_i\}$. Let us introduce the notation for decoding the collection of subsets:
\begin{definition}
    For a binary relation $a\in V_\lambda$ and $i\in\Dom(a)$, define the \emph{$i$th slice of $a$} by
    \begin{equation*}
        (a)_i = \{x\mid \lceil i,x\rceil\in a\}.
    \end{equation*}
    Also, for sets $a,b\in V_\lambda$, define
    \begin{equation*}
        (a:b) = \{(a)_i\mid i\in b\}.
    \end{equation*}
    That is, $(a:b)$ consists of all the slices of $a$ indexed by members of $b$.
\end{definition}

\section{Schlutzenberg's model for a Kunen cardinal} \label{Section: Schlutzenberg model}
Kunen's well-known argument shows that $\ZFC$ is incompatible with an elementary embedding $j\colon V_{\lambda+2}\to V_{\lambda+2}$, but it was open whether it is compatible with $\ZF$. Surprisingly, Schlutzenberg showed that $j\colon V_{\lambda+2}\to V_{\lambda+2}$ is consistent modulo the consistency of $I_0(\lambda)$ as stated in \autoref{Theorem: Schlutzenberg's theorem}. Let us provide a more precise statement for Schlutzenberg's theorem:
\begin{theorem}[Schlutzenberg \cite{Schlutzenberg2020Kunen}] \pushQED{\qed}
    Working over the theory $\ZF + \DC_\lambda + I_0(\lambda)$ with the witnessing embedding $j\colon L(V_{\lambda+1})\to L(V_{\lambda+1})$, the model $L(V_{\lambda+1},k)$ for $k=j\restricts V_{\lambda+2}^{L(V_{\lambda+1})}$ satisfies the following statements:
    \begin{enumerate}
        \item $\ZF + \DC_\lambda + I_0(\lambda)$,
        \item $k\colon V_{\lambda+2} \to V_{\lambda+2}$ is elementary, and
        \item $V_{\lambda+2}\subseteq L(V_{\lambda+1})$. \qedhere 
    \end{enumerate}
\end{theorem}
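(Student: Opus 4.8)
The plan is to analyze the inner model $L(V_{\lambda+1},k)$ by first pinning down how $j$ and its restriction $k$ sit inside $L(V_{\lambda+1})$, then isolating the one genuinely hard point—that passing from $L(V_{\lambda+1})$ to $L(V_{\lambda+1},k)$ creates no new subsets of $V_{\lambda+1}$—and finally reading off elementarity and the axiom list from it. Logically, statement (3) comes first, and I would treat it as the load-bearing step to which (1) and (2) reduce.

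First I would record the basic behaviour of the $I_0$ embedding. Since $\lambda=\sup_{n<\omega}j^n(\crit j)$ we have $j(\lambda)=\lambda$, so $j$ restricts to self-maps $j\restricts V_\lambda\colon V_\lambda\to V_\lambda$ and $j\restricts V_{\lambda+1}\colon V_{\lambda+1}\to V_{\lambda+1}$, and $k=j\restricts V_{\lambda+2}^{L(V_{\lambda+1})}$ is a genuine self-map of $V_{\lambda+2}^{L(V_{\lambda+1})}$. I would then note that $V_{\lambda+1}$ is absolute among $V$, $L(V_{\lambda+1})$, and $L(V_{\lambda+1},k)$, and that the low restrictions of $j$ are already constructible: each Kuratowski pair $\lag x,j(x)\rag$ with $x\in V_\lambda$ has rank below the limit ordinal $\lambda$, so $j\restricts V_\lambda\subseteq V_\lambda$ lies in $V_{\lambda+1}\subseteq L(V_{\lambda+1})$; and a short cofinality argument gives $j(A)=\bigcup_{\gamma<\lambda}(j\restricts V_\lambda)(A\cap V_\gamma)$ for $A\in V_{\lambda+1}$, whence $j\restricts V_{\lambda+1}\in L(V_{\lambda+1})$ too. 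Thus the only new information carried by $k$ is its action on the rank-$(\lambda+1)$ inputs, i.e.\ on subsets of $V_\lambda$ that are unbounded in rank; this localizes where the difficulty can possibly lie.

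The heart of the argument, and the step I expect to be by far the hardest, is (3): $\calP(V_{\lambda+1})^{L(V_{\lambda+1},k)}\subseteq L(V_{\lambda+1})$, equivalently that adjoining the proper class $k$ adds no subset of $V_{\lambda+1}$. A candidate $A\subseteq V_{\lambda+1}$ in $L(V_{\lambda+1},k)$ is definable over some level $L_\Theta(V_{\lambda+1},k)$ from $k$ and parameters in $V_{\lambda+1}\cup\mathrm{Ord}$, and the danger is precisely that decoding such a definition against the class $k$ might yield a genuinely new set. I would attack this with the reflection and coding machinery available for $I_0$—the structure theory that makes $L(V_{\lambda+1})$ behave as $L(\mathbb{R})$ does under determinacy, with $V_{\lambda+1}$ in the role of the reals and $\lambda$ in the role of $\omega$. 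Concretely, I would try to represent every such $A$ by a definable code inside $V_{\lambda+1}$: take an elementary substructure $H\prec L_\Theta(V_{\lambda+1},k)$ with $V_{\lambda+1}\cup\{A,k\}\subseteq H$, transitively collapse, and use the coherence of $j$ with its reflected restrictions to show that the collapsed predicate is again of the form $j'\restricts V_{\lambda+2}^{L(V_{\lambda+1})}$ for an embedding $j'$ already captured in $L(V_{\lambda+1})$; this would force $A\in L(V_{\lambda+1})$. The delicate part is arranging the condensation so that the collapsed copy of $k$ stays recognizable and lands back in $L(V_{\lambda+1})$, and it is exactly here that the fine reflection properties of $I_0$, together with $\DC_\lambda$ (needed to run the Skolem-hull and collapse arguments), are indispensable.

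Granting (3), the remaining statements follow comparatively cheaply. For (2), statement (3) gives $V_{\lambda+2}^{L(V_{\lambda+1},k)}=V_{\lambda+2}^{L(V_{\lambda+1})}$, so the structure on which $k$ acts inside the model is exactly $V_{\lambda+2}^{L(V_{\lambda+1})}$; since $j\colon L(V_{\lambda+1})\to L(V_{\lambda+1})$ is fully elementary and the satisfaction relation of the set structure $V_{\lambda+2}^{L(V_{\lambda+1})}$ is absolute, $k=j\restricts V_{\lambda+2}^{L(V_{\lambda+1})}$ preserves all first-order formulas, i.e.\ $k\colon V_{\lambda+2}\to V_{\lambda+2}$ is elementary in $L(V_{\lambda+1},k)$. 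For (1), that $L(V_{\lambda+1},k)$ models $\ZF$ is the general fact that relative constructible closure over a transitive set together with a class predicate yields a model of $\ZF$; $I_0(\lambda)$ holds there because, by (3), $L(V_{\lambda+1})^{L(V_{\lambda+1},k)}=L(V_{\lambda+1})$ and a witnessing embedding can be reconstructed from $k$ (every element of $L(V_{\lambda+1})$ is definable from a parameter in $V_{\lambda+1}$ and an ordinal, and $k$ determines the action of $j$ on these parameters); and $\DC_\lambda$ transfers from $V$ using (3) together with the definable well-ordering of $L(V_{\lambda+1},k)$ relative to $V_{\lambda+1}$, which reduces $\lambda$-length dependent choices in the model to such choices over $V_{\lambda+1}$ that are already available from $\DC_\lambda$ in $V$.
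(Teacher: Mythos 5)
The first thing to say is that the paper does not prove this theorem at all: it is imported as a black box from Schlutzenberg's work (hence the \verb|\qed| attached to the statement), so there is no in-paper proof to compare yours against. Judged on its own terms, your proposal gets the architecture right --- the reduction of (1) and (2) to (3), the observation that $j\restricts V_{\lambda+1}$ already lies in $L(V_{\lambda+1})$ so that only the action of $k$ on rank-$(\lambda+1)$ sets carries new information, and the derivations of elementarity of $k$ and of $\ZF+\DC_\lambda+I_0(\lambda)$ once (3) is granted are all sound (for $I_0(\lambda)$ you are implicitly using the standard fact that an $I_0$ embedding is determined by its restriction to $V_{\lambda+1}$, which is fine).

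The genuine gap is your treatment of (3). Saying ``take a hull $H\prec L_\Theta(V_{\lambda+1},k)$ containing $V_{\lambda+1}\cup\{A,k\}$, collapse, and use coherence to recognize the collapsed predicate as some $j'\restricts V_{\lambda+2}^{L(V_{\lambda+1})}$ already in $L(V_{\lambda+1})$'' is not an argument but a restatement of what must be shown: the whole difficulty is that there is no condensation lemma for the predicate $k$, and nothing in the sketch explains why the collapsed image of $k$ should again be (coded by) an object of $L(V_{\lambda+1})$ rather than something genuinely new. Note also that a hull containing $V_{\lambda+1}\cup\{A,k\}$ and cofinal enough to decide $A$ collapses to something containing essentially all of $L_\Theta(V_{\lambda+1},k)$ below the relevant level, so the collapse buys you nothing without a further idea. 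Schlutzenberg's actual proof that $\calP(V_{\lambda+1})\cap L(V_{\lambda+1},k)=\calP(V_{\lambda+1})\cap L(V_{\lambda+1})$ is the substantive content of his paper and proceeds by a careful level-by-level analysis of the hierarchy over $L(V_{\lambda+1})$ using the $I_0$ structure theory, not by a Skolem-hull reflection of the kind you describe. As written, your step (3) is a plan for where a proof would have to live, not a proof.
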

Here $\DC_\lambda$ (or $\lambda\mhyphen \DC$) is the axiom claiming the following: Suppose that $X$ is a set and let $T\subseteq X^{<\lambda}$ be a tree. If for every $\sigma\in T$ there is $x\in X$ such that $\sigma^\frown \lag x\rag \in T$, then $T$ has a branch of length $\lambda$ in the sense that there is $f\in X^\lambda$ such that $f\restricts \alpha\in T$ for all $\alpha<\lambda$. 
However, $\DC_\lambda$ is not the only choice-like principle that holds over Schlutzenberg's model. The following axiom appears in \cite{Goldberg2021EvenOrdinals}:
\begin{definition}
    We say $V_{\alpha+1}$ satisfies \emph{the Collection Principle} if every binary relation $R\subseteq V_\alpha\times V_{\alpha+1}$ has a subrelation $S\subseteq R$ such that $\dom R=\dom S$ and $\ran S$ is a surjective image of $V_{\alpha+1}$.
\end{definition}
We will see later that the Collection principle for $V_{\alpha+1}$ is associated with the second-order Collection over $H(V_{\alpha+1})$. The following lemma is essential to prove the validity of the Collection Principle for $V_{\lambda+1}$ over Schlutzenberg's model. The proof is included for completeness.
\begin{lemma} \label{Lemma: Global SVC for Schlutzenberg model}
    Schlutzenberg's model $L(V_{\lambda+1},k)$ thinks there is a definable surjection $\Psi\colon \Ord\times V_{\lambda+1}\to L(V_{\lambda+1},k)$.
\end{lemma}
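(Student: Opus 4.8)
The plan is to adapt the classical fact (the observation underlying Blass's \emph{Small Violations of Choice}) that for any set $A$ the relativized constructible universe $L(A)$ carries a definable surjection $\Ord\times\trcl(\{A\})\to L(A)$. Here $A=V_{\lambda+1}$ is transitive, so $\trcl(\{V_{\lambda+1}\})=V_{\lambda+1}\cup\{V_{\lambda+1}\}$, and the only genuine parameter set we must range over is $V_{\lambda+1}$ itself (the set $V_{\lambda+1}$ being a fixed definable constant of the model). The two twists compared to the plain $L(A)$ case are (i) the predicate $k$ must be folded into the construction, and (ii) the finite tuples of parameters that the Skolem-function argument throws up must be compressed back into single elements of $V_{\lambda+1}$; the second is exactly what the flat-sequence coding of the previous subsection is for.

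First I would fix the relativized hierarchy: set $L_0(V_{\lambda+1},k)=V_{\lambda+1}\cup\{V_{\lambda+1}\}$, let $L_{\alpha+1}(V_{\lambda+1},k)$ be the collection of subsets of $L_\alpha(V_{\lambda+1},k)$ definable over the structure $(L_\alpha(V_{\lambda+1},k),\in,\dot k)$ — where $\dot k$ is interpreted as $k$ intersected with that stage — allowing parameters from the stage, and take unions at limits. The crucial point is that $k$ enters as a \emph{guiding predicate}, not as a member of the transitive closure of the starting set; hence it contributes no parameters living outside $V_{\lambda+1}$, even though $k$ itself (and all of $V_{\lambda+2}\subseteq L(V_{\lambda+1})$) reappears as a \emph{defined} object at later stages.

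Next I would prove, by induction on $\alpha$, the key claim: every $y\in L_\alpha(V_{\lambda+1},k)$ has the form $\{z\in L_\beta(V_{\lambda+1},k):(L_\beta(V_{\lambda+1},k),\in,\dot k)\models\varphi(z,\xi_0,\dots,\xi_{m-1},a_0,\dots,a_{n-1})\}$ for some $\beta<\alpha$, some formula $\varphi$, ordinals $\xi_i$, and $a_j\in V_{\lambda+1}$. At a successor stage the defining parameters are themselves elements of $L_\beta(V_{\lambda+1},k)$, which by the inductive hypothesis are each named by an ordinal together with a member of $V_{\lambda+1}$, so one unwinds them to reduce everything to ordinal parameters and $V_{\lambda+1}$-parameters. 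Then I would encode the stage $\beta$, the Gödel number of $\varphi$, and the finite block $\xi_0,\dots,\xi_{m-1}$ into a single ordinal $\gamma$ via an ordinal pairing function, and the finite block $a_0,\dots,a_{n-1}$ into a single element $a=\lceil a_0,\dots,a_{n-1}\rceil\in V_{\lambda+1}$, using that $\Fin(V_{\lambda+1})\subseteq V_{\lambda+1}$ by the rank bounds of the flat-sequence lemmas. Defining $\Psi(\gamma,a)$ to decode $\gamma$ and $a$ and return the corresponding definable set (and $\varnothing$ when the decoding is malformed) then yields the desired map, and the key claim makes it surjective.

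The step I expect to be the main obstacle is verifying that $\Psi$ is genuinely definable inside $L(V_{\lambda+1},k)$. This requires a uniformly definable satisfaction predicate for the structures $(L_\alpha(V_{\lambda+1},k),\in,\dot k)$, that is, the standard fact that the relativized, predicated constructible hierarchy and its satisfaction relation are definable by a single formula with $\alpha$ as a free variable; one runs the usual Lévy-style argument, now carried through with the extra predicate $\dot k$. A secondary point to check carefully is that the reduction in the key claim never forces a parameter of rank above $\lambda$: this is where transitivity of $V_{\lambda+1}$ (so that members of $\trcl(\{V_{\lambda+1}\})$ other than $V_{\lambda+1}$ lie in $V_{\lambda+1}$) and the closure of $V_{\lambda+1}$ under flat finite sequences are both essential.
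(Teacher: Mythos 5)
Your proof is correct, and its core engine --- every element of the constructible closure is definable from finitely many ordinals and finitely many elements of $V_{\lambda+1}$, with the finite blocks compressed into a single ordinal via ordinal pairing and a single element of $V_{\lambda+1}$ via the flat tuples $\lceil a_0,\dots,a_{n-1}\rceil$ --- is exactly the content of the ``well-known fact'' the paper invokes. But the written proofs diverge in how they handle $k$. The paper treats $k$ as a \emph{set parameter}: it identifies $L(V_{\lambda+1},k)$ with $L(V_{\lambda+1}\times k)$, cites the black-box fact to get a definable surjection $\Ord\times(V_{\lambda+1}\times k)\to L(V_{\lambda+1},k)$, and then eliminates the $k$-coordinate by composing with a surjection $\Ord\times V_{\lambda+1}\to k$, which exists because $k\subseteq L(V_{\lambda+1})$ (its field lies in $V_{\lambda+2}^{L(V_{\lambda+1})}$) and $L(V_{\lambda+1})$ already carries the surjection $\Phi_{V_{\lambda+1}}$. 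You instead treat $k$ as a \emph{predicate} on the hierarchy, so no $k$-parameters ever arise and no composition step is needed, at the cost of inlining the Lévy-style uniform-definability argument that the paper leaves as a citation; you correctly note that $k$ (and $V_{\lambda+2}$) reappear as defined objects later, so the two presentations of the model agree. Your route is more self-contained and makes explicit where transitivity of $V_{\lambda+1}$ and closure under flat sequences are used; the paper's route is shorter and localizes all the work in the single observation that $k$ itself is coded by $\Ord\times V_{\lambda+1}$. Two small points to tidy in your write-up: state the key claim so that it covers $L_0$ (an element $a\in V_{\lambda+1}$ is the set defined by $z\in a$ over $L_0$ with parameter $a$ itself, and $V_{\lambda+1}$ is parameter-free definable there), and make explicit that the unwinding of parameters requires $L_{\beta}\in L_{\alpha}$ for $\beta<\alpha$ together with the uniformly definable satisfaction relation --- both standard, and you have flagged the latter.
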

\begin{proof}
    Let us remember the following well-known fact about $L(A)$, whose proof is essentially the same as the proof of the existence of the global well-order over $L$: There is a surjection $\Phi_A\colon \Ord\times A\to L(A)$ definable over $L(A)$ such that for every $\alpha$, $\Phi\restricts (\alpha\times A)\in L(A)$. 
    
    Hence Schlutzenberg's model $L(V_{\lambda+1},k)=L(V_{\lambda+1}\times k)$ has a definable bijection $\Phi_{V_{\lambda+1}\times k}\colon \Ord\times (V_{\lambda+1}\times k)\to L(V_{\lambda+1},k)$.
    Here $k$ is a function with domain $V_{\lambda+2}^{L(V_{\lambda+1})}$, and we have $V_{\lambda+2}^{L(V_{\lambda+1})}\subseteq L(V_{\lambda+1})$. Thus by using the definable surjection $\Phi_{V_{\lambda+1}}\colon \Ord\times V_{\lambda+1}\to L(V_{\lambda+1})$, we get a surjection $\Ord\times V_{\lambda+1}\to k$.
    By composing the previous surjection with $\Phi_{V_{\lambda+1}}\times k$, we get the desired surjection $\Psi$.
\end{proof}

\begin{lemma}
    Working over the Schlutzenberg's model, $V_{\lambda+2}$ satisfies the Collection principle.
\end{lemma}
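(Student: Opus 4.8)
The plan is to exploit the global surjection $\Psi\colon \Ord\times V_{\lambda+1}\to L(V_{\lambda+1},k)$ furnished by \autoref{Lemma: Global SVC for Schlutzenberg model}, which parametrizes every element of the ambient model — and in particular every element of $V_{\lambda+2}$, since $V_{\lambda+2}\subseteq L(V_{\lambda+1},k)$ by Schlutzenberg's theorem — by a pair consisting of an ordinal and an element of $V_{\lambda+1}$. Fix $R\subseteq V_{\lambda+1}\times V_{\lambda+2}$; we may assume $R\neq\varnothing$. For $b\in V_{\lambda+2}$ let the $\Psi$-rank $\rho(b)$ be the least ordinal $\gamma$ such that $b=\Psi(\gamma,x)$ for some $x\in V_{\lambda+1}$, which is well-defined because $\Psi$ is onto. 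For each $a\in\dom R$ set $\gamma_a=\min\{\rho(b)\mid (a,b)\in R\}$; this exists since the fiber is a nonempty set of ordinals.

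Next I would define the subrelation by keeping, for each $a$, exactly the witnesses of least $\Psi$-rank:
\begin{equation*}
    S=\{(a,b)\in R\mid \rho(b)=\gamma_a\}.
\end{equation*}
By construction $S\subseteq R$, and since each $\gamma_a$ is attained by some witness, $\dom S=\dom R$. It remains to show $\ran S$ is a surjective image of $V_{\lambda+1}$ (hence, as $V_{\lambda+1}$ is itself a surjective image of $V_{\lambda+2}$ via an identity-or-constant map, of $V_{\lambda+2}$). Working inside $L(V_{\lambda+1},k)$, which satisfies $\ZF$, the assignment $a\mapsto\gamma_a$ is definable, so by Replacement $D=\{\gamma_a\mid a\in\dom R\}$ is a set of ordinals, and $a\mapsto \gamma_a$ witnesses that $D$ is a surjective image of $\dom R\subseteq V_{\lambda+1}$, hence of $V_{\lambda+1}$.

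The heart of the argument is the containment $\ran S\subseteq \Psi[D\times V_{\lambda+1}]$: every $b\in\ran S$ has $\rho(b)=\gamma_a\in D$ for some $a$, so $b=\Psi(\gamma,x)$ with $\gamma\in D$ and $x\in V_{\lambda+1}$. Thus $\ran S$ is a subset of a surjective image of $D\times V_{\lambda+1}$. Since $D$ is a surjective image of $V_{\lambda+1}$, and $V_{\lambda+1}\times V_{\lambda+1}$ is in bijection with $V_{\lambda+1}$ — the flat pairing of \autoref{Lemma: flat pairing properties} embeds $V_\lambda\sqcup V_\lambda$ into $V_\lambda$, so $\calP(V_\lambda)\times\calP(V_\lambda)\cong\calP(V_\lambda\sqcup V_\lambda)\cong\calP(V_\lambda)$ by Cantor--Schr\"oder--Bernstein — the product $D\times V_{\lambda+1}$, and therefore $\Psi[D\times V_{\lambda+1}]$, is a surjective image of $V_{\lambda+1}$. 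Finally, any nonempty subset of a surjective image of $V_{\lambda+1}$ is again one, by composing the surjection with a retraction onto a fixed point of the subset. This yields the claim.

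The main obstacle I anticipate is precisely this last bookkeeping. A naive choice of $S$ — say keeping all witnesses of $\Psi$-rank below $\delta=\sup_a\gamma_a$ — only places $\ran S$ inside $\Psi[(\delta+1)\times V_{\lambda+1}]$, and the ordinal interval $\delta+1$ need not be a surjective image of $V_{\lambda+1}$ (indeed $V_{\lambda+2}=\calP(V_{\lambda+1})$ is not, by Cantor). Restricting to witnesses of \emph{exactly} minimal rank is what forces the relevant index set to be $D$, which is an image of the domain and hence small enough; verifying $V_{\lambda+1}\times V_{\lambda+1}\cong V_{\lambda+1}$ choicelessly is the only other point requiring care.
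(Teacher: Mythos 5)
Your proof is correct and follows essentially the same route as the paper's: both minimize the ordinal coordinate of the surjection $\Psi$ from \autoref{Lemma: Global SVC for Schlutzenberg model} to cut $R$ down to a subrelation whose range is parametrized by $V_{\lambda+1}$ (the paper exhibits the surjection $\lceil x,a\rceil\mapsto\lag x,\Psi(\alpha(x),a)\rag$ directly onto $S$, whereas you bound $\ran S$ inside $\Psi[D\times V_{\lambda+1}]$ with $D$ an image of $\dom R$ — a cosmetic difference). Your extra care about $V_{\lambda+1}\times V_{\lambda+1}$ being a surjective image of $V_{\lambda+1}$ and about nonempty subsets of surjective images is sound and is implicitly used in the paper's version as well.
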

\begin{proof}
    Let us argue inside Schlutzenberg's model. Let $R\subseteq V_{\lambda+1}\times V_{\lambda+2}$ be a relation. For each $x\in \dom R$, define 
    \begin{equation*}
        \alpha(x) = \min\{\xi\mid \exists a\in V_{\lambda+1} [\lag x, \Psi(\xi,a)\rag\in R]\}.
    \end{equation*}
    Then define
    \begin{equation*}
        S = \{\lag x, \Psi(\alpha(x),a)\rag \mid \lag x, \Psi(\alpha(x),a)\rag\in R\land a\in V_{\lambda+1}\}.
    \end{equation*}
    Clearly $S\subseteq R$ and $\dom R=\dom S$. Since the map $\lag x,a\rag\mapsto \lag x, \Psi(\alpha(x),a)\rag$ is a surjective map from $\dom R\times V_{\lambda+1}$ to $S$, it follows that $S$ is a surjective image of $V_{\lambda+1}$.
\end{proof}

The following definition is due to \cite{GoldbergSchlutzenberg2023}, but in a slightly different form.

\begin{definition}
    Let $M$ be a transitive set closed under flat pairing and $\Dom$. An elementary embedding $j\colon M\to M$ is \emph{cofinal} if for every $a\in M$ we can find $b\in M$ such that $a\in (j(b):\Dom j(b))$.
\end{definition}

This definition should be understood in a second-order set-theoretic context: Let us consider the case $M=V_{\lambda+1}$. Then we may view $M$ as a model $(V_\lambda,V_{\lambda+1})$ of second-order set theory.
Then the cofinality of $j\colon V_{\lambda+1}\to V_{\lambda+1}$ can be understood as follows: For every `class' $a\in V_{\lambda+1}$ over $V_\lambda$ we can find a collection of `classes' $B=\{(b)_x\mid x\in\Dom b\}$ coded by $b\in V_{\lambda+1}$ such that $a\in j(B)$. It is known that every non-trivial elementary embedding from $V_\gamma$ to itself for an even $\gamma$ is cofinal:

\begin{theorem}[Goldberg-Schlutzenberg {\cite[Theorem 3.10]{GoldbergSchlutzenberg2023}}] \pushQED{\qed}
    Let $\gamma$ be an even ordinal, that is, $\gamma$ is of the form $\delta+2n$ for some limit $\delta$ and $n<\omega$. Then if $j\colon V_\gamma\to V_\gamma$ is nontrivial and elementary, $j$ is cofinal. \qedhere
\end{theorem}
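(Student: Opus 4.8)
The plan is to recast cofinality as a capturing problem and then dispatch it rank by rank. Since $\Dom$ and the decoding maps $(\cdot)_i$ and $(\cdot:\cdot)$ are $\Delta_0$ with parameter $\omega$ and $j$ fixes $\omega$, elementarity gives $\Dom j(b)=j(\Dom b)$, $(j(b):\Dom j(b))=\{(j(b))_i\mid i\in j(\Dom b)\}$, and $(j(b))_{j(x)}=j((b)_x)$. Hence $j$ is cofinal exactly when for every $a\in V_\gamma$ there are $b\in V_\gamma$ and $i\in\Dom j(b)$ with $(j(b))_i=a$. The decisive observation is that $\Dom j(b)=j(\Dom b)$ properly contains $j^"[\Dom b]$ whenever $j$ moves $\Dom b$ discontinuously, so $j(b)$ acquires \emph{new} sections $(j(b))_i$ indexed by seeds $i\in j(\Dom b)\setminus j^"[\Dom b]$; since such an $i$ need not lie in $\Ran j$, these new sections are exactly what can realize an $a\notin\Ran j$.

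For the bulk of $V_\gamma$ an explicit family suffices. Given $\beta$ with $\beta+2\le\gamma$, let $b=\{\lceil c,x\rceil \mid c\in V_{\beta+1},\ x\in c\}$ code the identity family on $V_{\beta+1}$; \autoref{Lemma: flat pairing properties} gives $b\in V_{\beta+2}\subseteq V_\gamma$. The statement ``$\Dom b$ consists of the nonempty subsets of $V_\beta$, and $(b)_c=c$ for all $c\in\Dom b$'' is first-order over $V_\gamma$ with parameter $V_\beta$, so by elementarity the sections of $j(b)$ are exactly the nonempty elements of $V_{j(\beta)+1}$. Thus every nonempty $a$ with $\rank a\le j(\beta)$ is captured. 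When $\gamma$ is a limit this already proves the theorem, since any $a\in V_\gamma$ lies in some $V_\beta$ with $\beta+2<\gamma$ and $\rank a\le\beta\le j(\beta)$. When $\gamma=\delta+2n$ with $n\ge1$, letting $\beta$ run up to $\gamma-2$ captures every $a$ of rank at most $\gamma-2$, so the only survivors are the elements of the top slice $V_\gamma\setminus V_{\gamma-1}$, those of rank exactly $\gamma-1$.

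The remaining case --- the top slice of a successor even $\gamma$ --- is the crux, and is where I expect the genuine difficulty and the essential use of the parity of $\gamma$. The identity family cannot reach rank $\gamma-1$, since an index set of rank $\le\gamma-2$ yields only sections of rank $\le\gamma-2$; one must exploit new seeds. That this is the right mechanism is already visible for singletons: taking $b=\{\lceil c,c\rceil \mid c\in V_{\gamma-1}\}$, which codes $c\mapsto\{c\}$ and lies in $V_\gamma$, elementarity gives $(j(b))_s=\{s\}$ for every $s\in\Dom j(b)=V_{\gamma-1}$, and for a seed $s\notin\Ran j$ of rank $\gamma-2$ this captures the top-slice set $\{s\}$, which itself lies outside $\Ran j$. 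The strategy for a general top-slice $a$ is to represent it through the long extender that $j\restricts V_\gamma$ derives: to produce $b\in V_\gamma$ coding a family $g$ whose index set has rank $\le\gamma-2$ (so that \autoref{Lemma: flat pairing properties} keeps $b$ in $V_\gamma$, even though the values of $g$ are themselves top-slice) together with a generator $s$ of $j\restricts V_\gamma$ such that $(j(b))_s=j(g)(s)=a$. The coding is feasible at either parity; what is parity-sensitive, and what I expect to be the real obstacle, is the claim that the generators below $\gamma$ actually \emph{recover} $j$ on the entire top slice. This is precisely the periodicity phenomenon isolated by Goldberg and Schlutzenberg: at an even level every top-slice object is so recovered, while at an odd level some object is not --- the exact sense in which the $I_1$ Reinhardt embedding of Matthews fails to be cofinal. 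Establishing this recovery, together with the bookkeeping identifying which seed realizes a given $a$, is thus the single step that genuinely requires ``$\gamma$ even,'' and I regard it as the heart of the proof.
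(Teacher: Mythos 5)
This is a result the paper imports from Goldberg--Schlutzenberg without proof, so the comparison is against the cited argument rather than one in the text. Your proposal correctly reduces cofinality to a capturing problem and correctly disposes of the easy portion: the identity family $b=\{\lceil c,x\rceil\mid c\in V_{\beta+1},\ x\in c\}$ does capture every nonempty set of rank at most $j(\beta)$, which settles limit $\gamma$ entirely and, at a successor even $\gamma$, everything of rank at most $\gamma-2$. But the remaining case --- sets of rank exactly $\gamma-1$ at a successor even $\gamma$ --- is precisely the content of the theorem (in the paper's application $\gamma=\lambda+2$, the sets one must capture are exactly the $A\subseteq V_{\lambda+1}$ of rank $\lambda+1$), and for that case you offer only a strategy: you invoke ``the long extender derived from $j$'' and assert that its generators below $\gamma$ recover $j$ on the top slice, while explicitly acknowledging that establishing this is the heart of the matter. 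That acknowledged step is a genuine gap, not a routine verification; as written, the proposal proves only the part of the theorem that does not require evenness.

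For the record, the missing argument can be carried out concretely at $\gamma=\lambda+2$ as follows, and this is where parity enters. The seed is $e_0=j\restricts V_\lambda$, which (with flat pairing) is a subset of $V_\lambda$ and hence an element of $V_{\lambda+1}$, so it is an admissible index for a section; moreover $e_0$ determines $j\restricts V_{\lambda+1}$ via $j(y)=\bigcup_{\alpha<\lambda}e_0(y\cap V_\alpha)$, and one has the identity $A=\{y\in V_{\lambda+1}\mid j(y)\in j(A)\}$. So for a given $A\subseteq V_{\lambda+1}$ one defines $B=\{\lceil d,y\rceil\mid d\in V_{\lambda+1},\ y\in V_{\lambda+1},\ \bigcup_{\alpha<\lambda}d(y\cap V_\alpha)\in A\}$, an element of $V_{\lambda+2}$; elementarity then gives $(j(B))_{e_0}=\{y\in V_{\lambda+1}\mid j(y)\in j(A)\}=A$, so $A\in(j(B):\Dom j(B))$. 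The evenness of $\gamma$ is used exactly in the fact that $j\restricts V_{\gamma-2}$ both lies in $V_{\gamma-1}$ (so it can serve as an index) and determines $j\restricts V_{\gamma-1}$ (so it can decode membership in $j(A)$); at odd $\gamma$ the available indices live one level too low to determine $j$, which is why the analogous embedding in Matthews' model fails to be cofinal. Your sketch points at the right mechanism, but without this construction the proof is incomplete.
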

In fact, if $j\colon V_\gamma\to V_\gamma$ is cofinal, then $\gamma$ must be even: If $\gamma$ is odd and $j\colon V_\gamma\to V_\gamma$, then $j$ is definable from a parameter in $V_\gamma$. \cite[Theorem 3.11]{GoldbergSchlutzenberg2023} says no cofinal embedding $j\colon V_\gamma\to V_\gamma$ is definable from parameters in $V_\gamma$.

\section{A model for \texorpdfstring{$\ZF^-$}{ZF-} with a cofinal Reinhardt embedding}

In this section, we prove that $H(V_{\lambda+2})$ in Schlutzenberg's model with a Kunen cardinal $\lambda$ is a model for $\ZF^-$ with a cofinal Reinhardt embedding. We approach this problem by coding sets with trees, which is necessary to translate an elementary embedding over $V_{\lambda+2}$ to that over $H(V_{\lambda+2})$. 
\emph{Throughout this section, we work inside Schlutzenberg's model. Also, we always assume every tuple and pair is flat unless specified.}

Let us start with the definition of trees that turn out to code elements of $H(V_{\lambda+2})$:
\begin{definition}
    A set $T$ of finite tuples of sets is a \emph{tree over $X$} if it satisfies the following conditions:
    \begin{enumerate}
        \item Every element of $T$ is a finite flat tuple of elements of $X$.
        \item $T$ is closed under initial segments: That is, if $s\in T$ and if $t$ is an initial segment of $s$, then $t\in T$.
        \item The empty sequence $\lceil \cdot \rceil$ is in $T$.
    \end{enumerate}

    Furthermore, we say $T$ is \emph{suitable} if it satisfies the following additional condition:
    \begin{enumerate} \setcounter{enumi}{3}
        \item $T$ is well-founded, that is, for every non-empty subset $Z\subseteq T$ there is a node $\sigma\in Z$ such that $\sigma^\frown \lceil u\rceil \notin T$ for every $u$.
    \end{enumerate}

    For a suitable $T$ and $\sigma\in T$, define
    \begin{itemize}
        \item $T\downarrow\sigma:=\{\tau\mid \sigma^\frown \tau\in T\}$,\footnote{We assume every tree grows downwards, so $T\downarrow \sigma$ is the subtree of $T$ starting from $\sigma$.} and
        \item $\suc_T(\sigma) = \{u \mid \sigma^\frown \lceil u\rceil\in T\}$.
    \end{itemize}
\end{definition}

For each $a\in H(V_{\lambda+2})$, there is a (not necessarily unique) way to code $a$ into a suitable tree. Observe that if $a\in H(V_{\lambda+2})$ is non-empty, then there is an onto function $f\colon V_{\lambda+1}\to \trcl(a)$. Then consider the following coding tree:
\begin{definition}
    For each $a\in H(V_{\lambda+2})$ and an onto map $f\colon X\to \trcl(a)$ for some $X\subseteq V_{\lambda+1}$, define the \emph{canonical tree} $C_{a,f}$ by the set of all finite tuples $\lceil u_0,\cdots, u_{n-1}\rceil\in \Fin(X)$ such that $a\ni f(u_0)\ni\cdots\ni f(u_{n-1})$.
\end{definition}
Since $\in$ is well-founded, the canonical tree $C_{a,f}$ is also well-founded, so it is a suitable tree over $V_{\lambda+1}$.
The reason for allowing the domain of $f$ to be an arbitrary subset of $V_{\lambda+1}$ is technical: One of the reasons is that it allows us a canonical tree coding the empty set with $\dom f = 0$. The role of the technical requirement will become evident in the proof of \autoref{Lemma: Canonical tree codes the set}.

Conversely, we also have a tool to decode a suitable tree into a set:
\begin{definition}
    Let $T$ be a suitable tree. Define the \emph{transitive collapse $\Tcoll(T)$ of $T$} by recursion on $T$:
    \begin{equation*}
        \Tcoll(T) = \{\Tcoll(T\downarrow \lceil u\rceil)\mid\lceil u\rceil\in T\}.
    \end{equation*}
\end{definition}

The following lemma shows that $C_{a,f}$ codes $a$:
\begin{lemma} \label{Lemma: Canonical tree codes the set}
    Let $a\in H(V_{\lambda+2})$ and $f\colon X \to \trcl\{a\}$ be an onto map for some $X\subseteq V_{\lambda+1}$. Then $C_{a,f}$ is a suitable tree over $V_\lambda$ and $\Tcoll(C_{a,f})=a$.
\end{lemma}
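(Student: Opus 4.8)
The plan is to prove the two claims—that $C_{a,f}$ is a suitable tree over $V_\lambda$ and that $\Tcoll(C_{a,f}) = a$—by separate arguments, with the collapse identity established by $\in$-induction. First I would verify that $C_{a,f}$ is genuinely a tree over $V_{\lambda+1}$: every element is by definition a flat finite tuple from $X \subseteq V_{\lambda+1}$, the empty tuple $\lceil\cdot\rceil$ lies in $C_{a,f}$ vacuously, and closure under initial segments is immediate since $a\ni f(u_0)\ni\cdots\ni f(u_{k-1})$ is a weaker descending-membership condition than the full $a\ni f(u_0)\ni\cdots\ni f(u_{n-1})$. Well-foundedness (suitability) follows because any descending chain in $C_{a,f}$ would, via $f$, produce an infinite $\in$-descending sequence $a\ni f(u_0)\ni f(u_1)\ni\cdots$, contradicting the foundation axiom; more carefully, given a nonempty $Y\subseteq C_{a,f}$, one picks a node $\sigma = \lceil u_0,\dots,u_{n-1}\rceil\in Y$ for which $f(u_{n-1})$ is $\in$-minimal among the relevant values, and checks it has no successor in $Y$.

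The more substantial half is $\Tcoll(C_{a,f}) = a$. The natural route is $\in$-induction along $\trcl\{a\}$, but the recursion defining $\Tcoll$ peels off one coordinate at a time, so I would first record the key structural fact that for $\lceil u\rceil \in C_{a,f}$ (equivalently $f(u)\in a$), the subtree $C_{a,f}\downarrow\lceil u\rceil$ is itself (essentially) a canonical tree coding $f(u)$. Concretely, $\tau = \lceil v_0,\dots,v_{m-1}\rceil \in C_{a,f}\downarrow\lceil u\rceil$ iff $\lceil u,v_0,\dots,v_{m-1}\rceil\in C_{a,f}$ iff $a\ni f(u)\ni f(v_0)\ni\cdots\ni f(v_{m-1})$, and since the leading condition $a\ni f(u)$ is automatic here, this says exactly $f(u)\ni f(v_0)\ni\cdots\ni f(v_{m-1})$; hence $C_{a,f}\downarrow\lceil u\rceil = C_{f(u),f}$ (noting $f$ restricted to $f^{-1}[\trcl\{f(u)\}]$ still surjects onto $\trcl\{f(u)\}$, using $f(u)\in\trcl\{a\}$ so $\trcl\{f(u)\}\subseteq\trcl\{a\}$). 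Applying the inductive hypothesis, $\Tcoll(C_{a,f}\downarrow\lceil u\rceil) = \Tcoll(C_{f(u),f}) = f(u)$.

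Granting that, I would unwind the definition of the transitive collapse at the top level:
\begin{equation*}
    \Tcoll(C_{a,f}) = \{\Tcoll(C_{a,f}\downarrow\lceil u\rceil)\mid \lceil u\rceil\in C_{a,f}\} = \{f(u)\mid f(u)\in a\} = a,
\end{equation*}
where the last equality uses that $\lceil u\rceil\in C_{a,f}$ is exactly the condition $f(u)\in a$, together with the surjectivity of $f$ onto $\trcl\{a\}\supseteq a$ so that every element of $a$ is hit. The base case $a=\varnothing$ is handled by the convention allowing $\dom f = X = 0$: then $C_{a,f}$ contains only $\lceil\cdot\rceil$, and $\Tcoll$ returns the empty set, which is why the definition of the canonical tree was deliberately permitted to take $X$ an arbitrary subset of $V_{\lambda+1}$.

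The main obstacle I anticipate is the bookkeeping in the inductive step, specifically justifying that $C_{a,f}\downarrow\lceil u\rceil$ really is a canonical tree for $f(u)$ with the \emph{same} parametrizing map $f$—one must confirm that the relevant restriction of $f$ still surjects onto the full transitive closure $\trcl\{f(u)\}$, which is where the choice to let $\dom f$ range over subsets of $V_{\lambda+1}$ (rather than all of $V_{\lambda+1}$) pays off and keeps the induction self-contained. A secondary, purely technical point is that the recursion defining $\Tcoll$ is legitimate: it is well-founded recursion on the suitable (hence well-founded) tree $C_{a,f}$, measured by the rank of the tree under $\sigma \mapsto \sigma^\frown\lceil u\rceil$, so the values $\Tcoll(C_{a,f}\downarrow\lceil u\rceil)$ are all defined before $\Tcoll(C_{a,f})$.
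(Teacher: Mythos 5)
Your proposal is correct and follows essentially the same route as the paper: an $\in$-induction whose key step identifies $C_{a,f}\downarrow\lceil u\rceil$ with a canonical tree $C_{f(u),\,f\restricts X_0}$ for the restriction of $f$ to $X_0=\{u'\in X\mid f(u')\in\trcl(f(u))\}$, then unwinds $\Tcoll$ at the root. The only difference is that you spell out the tree axioms and well-foundedness explicitly, which the paper dispatches in one sentence after the definition.
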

\begin{proof}
    Let us prove it by induction on $a \in H(V_{\lambda+2})$. Suppose that for every non-empty $x\in a$ and an onto map $g\colon Y \to \trcl(x)$ for $Y\subseteq V_{\lambda+1}$ we have $\Tcoll(C_{x,g})=x$. Fix $X\subseteq V_{\lambda+1}$ and an onto map $f\colon X\to \trcl(a)$. Then we have
    \begin{equation*}
        \Tcoll(C_{a,f}) = \{\Tcoll(C_{a,f}\downarrow\lceil u\rceil)\mid u\in V_{\lambda+1}\}.
    \end{equation*}
    We claim first that $\Tcoll(C_{a,f})\subseteq a$: If $u_0$ is an immediate successor of the empty node over $T$, then
    \begin{equation*}
        C_{a,f}\downarrow\lceil u_0\rceil = \{ \lceil u_1,\cdots,u_{n-1}\rceil \in \Fin(X) \mid f(u_0)\ni f(u_1)\ni \cdots f(u_{n-1}) \}.
    \end{equation*}
    Take $X_0 = \{u\in X \mid f(u)\in \trcl(f(u_0))\}$. Then $f\restriction  X_0$ is onto $\trcl(f(u_0))$, and we have
    \begin{equation*}
        C_{a,f} \downarrow\lceil u_0\rceil = C_{f(u_0),f\restriction  X_0}.
    \end{equation*}
    By the inductive hypothesis on $f(u_0)\in a$, we have
    \begin{equation*}
        \Tcoll(C_{a,f}\downarrow\lceil u_0\rceil) = \Tcoll(C_{f(u_0),f\restriction  X_0})=f(u_0).
    \end{equation*}
    This shows $\Tcoll(C_{a,f}) = \{f(u)\mid \lceil u\rceil\in C_{a,f}\} = \{f(u)\mid u\in X, f(u)\in a\}=a$.
\end{proof}

The following notion allows us to formulate the equality and the membership relationship between suitable trees in a definable manner over $V_{\lambda+2}$: 
\begin{definition}
    Let $S$ and $T$ be suitable trees over $V_{\lambda+1}$. We say $X$ is a \emph{multi-valued isomorphism from $S$ to $T$} if $X\subseteq S\otimes T$ satisfies the following: 
    \begin{enumerate}
        \item $\lceil \lceil \cdot \rceil, \lceil \cdot \rceil \rceil \in X$.
        \item $\lceil \sigma,\tau\rceil\in X$ iff the follwing holds:
        \begin{enumerate}
            \item $\forall u\in \suc_S(\sigma) \exists v\in \suc_T(\tau) [\lceil \sigma^\frown\lceil u\rceil, \tau^\frown \lceil v\rceil\rceil\in X]$.
            \item $\forall u\in \suc_T(\tau) \exists v\in \suc_S(\sigma)  [\lceil \sigma^\frown\lceil u\rceil, \tau^\frown \lceil v\rceil\rceil\in X]$.
        \end{enumerate}
    \end{enumerate}
    Then define
    \begin{enumerate}
        \item $S=^* T \iff \exists X (\text{There is a multi-valued isomorphism from $S$ to $T$})$.
        \item $S\in^* T \iff \exists u\in \suc_T(\lceil \cdot \rceil) \exists X (S =^* T\downarrow \lceil u\rceil)$.
    \end{enumerate}
\end{definition}

\begin{lemma} \label{Lemma: Suitable tree coding for relations}
    For suitable trees $S$ and $T$, we have the following:
    \begin{enumerate}
        \item $S=^* T$ iff $\Tcoll(S)=\Tcoll(T)$.
        \item $S\in^* T$ iff $\Tcoll(S)\in \Tcoll(T)$.
    \end{enumerate}
\end{lemma}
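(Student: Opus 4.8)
The plan is to prove both items simultaneously by a single induction on the well-founded structure of the trees, since the definitions of $=^*$ and $\in^*$ are mutually recursive through $\Tcoll$. The key conceptual point is that a multi-valued isomorphism $X$ from $S$ to $T$ is exactly the ``bisimulation'' witnessing that $S$ and $T$ collapse to the same set: condition (2a) of the definition forces each immediate successor subtree of $S$ to be matched, via $X$, by some immediate successor subtree of $T$, and (2b) forces the symmetric matching. So the heart of the argument is to show that the existence of such an $X$ is equivalent to the equality of the collapses.

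First I would prove item (1). For the forward direction, suppose $X$ witnesses $S =^* T$. I would argue by induction on the rank of $\Tcoll(S)$ (equivalently on the well-founded subtree structure) that for every $\lceil \sigma,\tau\rceil \in X$ we have $\Tcoll(S\downarrow\sigma) = \Tcoll(T\downarrow\tau)$. Granting the inductive hypothesis at all proper successors, condition (2a) says every element $\Tcoll(S\downarrow\sigma^\frown\lceil u\rceil)$ of $\Tcoll(S\downarrow\sigma)$ equals some element $\Tcoll(T\downarrow\tau^\frown\lceil v\rceil)$ of $\Tcoll(T\downarrow\tau)$, giving $\Tcoll(S\downarrow\sigma)\subseteq\Tcoll(T\downarrow\tau)$; condition (2b) gives the reverse inclusion, so the two sets are equal. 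Applying this to the root pair $\lceil \lceil\cdot\rceil,\lceil\cdot\rceil\rceil\in X$ (guaranteed by condition (1)) yields $\Tcoll(S)=\Tcoll(T)$. For the converse, assume $\Tcoll(S)=\Tcoll(T)$, and define
\begin{equation*}
    X = \{\lceil \sigma,\tau\rceil \in S\otimes T \mid \Tcoll(S\downarrow\sigma)=\Tcoll(T\downarrow\tau)\}.
\end{equation*}
I would verify that $X$ satisfies the two closure conditions: condition (1) holds by the hypothesis, and the biconditional (2) follows because $\Tcoll(S\downarrow\sigma)=\Tcoll(T\downarrow\tau)$ unwinds, by the recursive definition of $\Tcoll$, to exactly the statement that every collapse of a child of $\sigma$ appears as the collapse of some child of $\tau$ and vice versa, which is (2a) together with (2b). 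One subtle point to check here is that $X$ as defined is genuinely an element of $V_{\lambda+2}$, i.e.\ that the collapse-equality relation restricted to $S\otimes T$ is available as a set; this is where I would lean on $S\otimes T\in V_{\lambda+2}$ (via \autoref{Lemma: Flat product closure}) together with the fact that, working inside Schlutzenberg's model, the recursively defined $\Tcoll$ and hence the relation $\Tcoll(S\downarrow\sigma)=\Tcoll(T\downarrow\tau)$ is definable over the relevant initial segment.

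Item (2) then reduces to item (1) essentially by unwinding definitions. By definition $S\in^* T$ holds iff there is $u\in\suc_T(\lceil\cdot\rceil)$ with $S =^* T\downarrow\lceil u\rceil$, which by item (1) is equivalent to $\Tcoll(S)=\Tcoll(T\downarrow\lceil u\rceil)$ for some immediate successor $u$ of the root of $T$. On the other hand, by the defining equation $\Tcoll(T)=\{\Tcoll(T\downarrow\lceil u\rceil)\mid \lceil u\rceil\in T\}$, the condition $\Tcoll(S)\in\Tcoll(T)$ says precisely that $\Tcoll(S)$ equals $\Tcoll(T\downarrow\lceil u\rceil)$ for some such $u$. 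These two conditions coincide, completing the equivalence.

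The step I expect to be the main obstacle is the converse direction of item (1), and specifically the bookkeeping needed to confirm that the canonically defined witness $X$ is a legitimate object at the right level, rather than just a definable class. The logical content is routine, but because we are working without Powerset over $V_{\lambda+2}$ and with flat codings, I would need to be careful that $\Tcoll$ restricted to subtrees is uniformly definable and that the resulting $X\subseteq S\otimes T$ lands inside $V_{\lambda+2}$. The induction itself is on the well-foundedness of the trees (condition (4) of suitability), so I would also want to state the induction in terms of a rank on well-founded trees to make the two recursions on $=^*$ and $\in^*$ visibly well-defined.
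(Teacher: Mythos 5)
Your proposal is correct and follows essentially the same route as the paper: the same induction on the well-founded tree structure showing that a multi-valued isomorphism forces equality of collapses at every matched pair, the same canonical witness $X = \{\lceil\sigma,\tau\rceil \mid \Tcoll(S\downarrow\sigma)=\Tcoll(T\downarrow\tau)\}$ for the converse, and the same unwinding of definitions for item (2) (which the paper simply declares immediate). The set-existence worry you flag is harmless, since $X\subseteq S\otimes T\subseteq V_{\lambda+1}$ gives $X\in V_{\lambda+2}$ by Separation, exactly as the paper notes in \autoref{Lemma: Definability of notions for suitable trees}.
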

\begin{proof}
    Suppose that we have $S=^* T$, and let $X\subseteq S\otimes T$ be a multi-valued isomorphism between $S$ and $T$. We prove the following by induction on $\lceil\sigma,\tau\rceil\in S\otimes T$ with the pointwise comparison relation that is well-founded:
    \begin{equation*}
        \lceil \sigma,\tau\rceil \in X\implies \Tcoll(S\downarrow\sigma)=\Tcoll(T\downarrow\tau).
    \end{equation*}
    Suppose that the above implication holds for every $\lceil \sigma^\frown\lceil u\rceil, \tau^\frown \lceil v\rceil \rceil$ such that $ \sigma^\frown\lceil u\rceil\in S$ and $\tau^\frown \lceil v\rceil\in T$.
    Then we have
    \begin{equation*}
        \Tcoll(S\downarrow\sigma) = \{\Tcoll(S\downarrow\sigma^\frown\lceil u\rceil) \mid u\in \suc_S(\sigma)\}.
    \end{equation*}
    Since $X$ is a multi-valued isomorphism from $S$ to $T$, we have for each $u\in \suc_S(\sigma)$ there is $v\in \suc_T(\tau)$ such that $\lceil \sigma^\frown\lceil u\rceil, \tau^\frown \lceil v\rceil\rceil\in X$. By the inductive hypothesis, we get
    \begin{equation*}
        \Tcoll(S\downarrow \sigma^\frown\lceil u\rceil) = \Tcoll(T\downarrow\tau^\frown\lceil v\rceil)
    \end{equation*}
    This implies $\Tcoll(S\downarrow\sigma)\subseteq \Tcoll(T\downarrow\tau)$, and the symmetric argument gives the reversed inclusion. Hence, we get the desired equality.
    The implication from $S=^*T$ to $\Tcoll(S)=\Tcoll(T)$ follows from the above lemma by letting $\sigma=\tau=\lceil \cdot \rceil$.

    Conversely, assume that we have $\Tcoll(S)=\Tcoll(T)$. Define
    \begin{equation*}
        X = \{\lceil \sigma,\tau\rceil \mid \Tcoll(S\downarrow\sigma) = \Tcoll(T\downarrow\tau)\}.
    \end{equation*}
    We claim that $X$ is a multi-valued isomorphism from $S$ to $T$. $\lceil \lceil \cdot \rceil, \lceil \cdot \rceil \rceil \in X$ is clear, and let us prove the remaining conditions.

    Suppose that $\lceil \sigma,\tau\rceil \in X$ and $u\in \suc_S(\sigma)$. We want to find $v\in \suc_T(\tau)$ such that $\lceil \sigma^\frown\lceil u\rceil, \tau^\frown \lceil v\rceil\rceil\in X$, or equivalently,
    \begin{equation*}
        \Tcoll(S\downarrow(\sigma^\frown \lceil u\rceil)) =  \Tcoll(T\downarrow(\tau^\frown \lceil v\rceil)).
    \end{equation*}
    However, $\lceil \sigma,\tau\rceil \in X$ means $\Tcoll(S\downarrow\sigma)=\Tcoll(T\downarrow \tau)$, so we can find a desired $v$. Proving the remaining subcondition is identical.

    Conversely, assume that we have the following properties:
    \begin{enumerate}
        \item $\forall u\in \suc_S(\sigma) \exists v\in \suc_T(\tau) [\lceil \sigma^\frown\lceil u\rceil, \tau^\frown \lceil v\rceil\rceil\in X]$.
        \item $\forall u\in \suc_T(\tau) \exists v\in \suc_S(\sigma)  [\lceil \sigma^\frown\lceil u\rceil, \tau^\frown \lceil v\rceil\rceil\in X]$.
    \end{enumerate}
    The first condition implies $\Tcoll(S\downarrow \sigma)\subseteq \Tcoll(T\downarrow\tau)$, and the remaining one implies the reversed inclusion, so $\lceil\sigma,\tau\rceil\in X$.

    Proving the equivalence between $S\in^* T$ and $\Tcoll(S)\in \Tcoll(T)$ is immediate, so we omit its proof.
\end{proof}

\begin{lemma} \label{Lemma: Definability of notions for suitable trees}
    The following propositions are all definable over $V_{\lambda+2}$ with parameter $V_{\lambda+1}$:
    \begin{enumerate}
        \item $T$ is a suitable tree over $V_{\lambda+1}$.
        \item $S=^* T$.
        \item $S \in^* T$
    \end{enumerate}
\end{lemma}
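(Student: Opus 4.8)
The plan is to verify each of the three claims by exhibiting explicit formulas in the language $\{\in\}$ using $V_{\lambda+1}$ as a parameter, and then to observe that every quantifier appearing in these formulas can be bounded by $V_{\lambda+1}$ or $V_{\lambda+2}$, so that the defining conditions are genuinely first-order expressible over the structure $V_{\lambda+2}$. Throughout, the governing principle is that a suitable tree $T$ over $V_{\lambda+1}$ is a set of finite flat tuples of elements of $V_{\lambda+1}$; by the rank estimates of \autoref{Lemma: flat pairing properties} and the closure of flat finite tuples, every node of $T$ lies in $V_{\lambda+1}$, so $T\subseteq V_{\lambda+1}$ and hence $T\in V_{\lambda+2}$. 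Thus quantifiers over nodes, successors, and trees themselves all range within sets available to $V_{\lambda+2}$.

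First I would handle (1). The conditions defining ``$T$ is a suitable tree over $V_{\lambda+1}$'' are: every element of $T$ is a flat finite tuple from $V_{\lambda+1}$; $T$ is closed under initial segments; $\lceil\cdot\rceil\in T$; and $T$ is well-founded. The first three are straightforwardly first-order with all quantifiers bounded by $T$ and by $V_{\lambda+1}$, using the $\Delta_0$-definability (with parameter $\omega$) of the length function, of $\sigma{}^\frown\tau$, and of the predicate ``$t$ is an initial segment of $s$.'' The one point needing care is well-foundedness, which as literally stated quantifies over \emph{all} non-empty subsets $X\subseteq T$; but since $T\in V_{\lambda+2}$ we have $\calP(T)\cap V_{\lambda+2}=\calP(T)$ available inside $V_{\lambda+2}$, so the quantifier ``$\forall X\subseteq T$'' is a legitimate quantifier over $V_{\lambda+2}$. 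Hence the whole statement is expressible over $V_{\lambda+2}$ with parameter $V_{\lambda+1}$.

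Next I would treat (2) and (3) together, since $\in^*$ is defined from $=^*$. For $S=^*T$, the definition reads $\exists X\,(X$ is a multi-valued isomorphism from $S$ to $T)$. The witness $X$ is a subset of $S\otimes T$; by \autoref{Lemma: Flat product closure} we have $S\otimes T\in V_{\lambda+2}$, so $X\in V_{\lambda+2}$ as well, and the existential quantifier over $X$ is a quantifier over $V_{\lambda+2}$. The matching condition defining when $X$ is a multi-valued isomorphism is a conjunction of the base clause $\lceil\lceil\cdot\rceil,\lceil\cdot\rceil\rceil\in X$ and a biconditional whose two halves quantify only over $\suc_S(\sigma)$ and $\suc_T(\tau)$ — both subsets of $V_{\lambda+1}$ — together with nodes $\sigma,\tau$ ranging over $S,T$; every such quantifier is bounded inside $V_{\lambda+2}$, and the flat operations appearing ($\sigma{}^\frown\lceil u\rceil$, the pairing $\lceil\cdot,\cdot\rceil$) are $\Delta_0$ with parameter $\omega$. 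Thus $S=^*T$ is definable over $V_{\lambda+2}$. Finally, $S\in^*T$ is $\exists u\in\suc_T(\lceil\cdot\rceil)\,\exists X\,(X$ witnesses $S=^*T\downarrow\lceil u\rceil)$; here $u$ ranges over a subset of $V_{\lambda+1}$, the subtree $T\downarrow\lceil u\rceil$ is a $\Delta_0$-definable function of $T$ and $u$ producing a member of $V_{\lambda+2}$, and the clause after it is exactly the (already verified) definability of $=^*$. So $\in^*$ is definable over $V_{\lambda+2}$ as well.

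The main obstacle I anticipate is \emph{not} the bookkeeping of bounded quantifiers — that is routine given the $\Delta_0$ lemmas already established — but rather confirming that the well-foundedness clause in (1) and the existential-witness clause in (2) do not secretly require quantification outside $V_{\lambda+2}$. The resolution in both cases is the same observation: any tree $T$, any subset of $T$, and any multi-valued isomorphism $X\subseteq S\otimes T$ all have rank below $\lambda+2$, hence lie in $V_{\lambda+2}$, so the relevant power-set-style quantifiers are legitimately interpreted inside $V_{\lambda+2}$. Once this is noted, the three parts follow by assembling the component $\Delta_0$ predicates, and I would present the argument by writing out a representative formula for each part rather than belaboring every clause.
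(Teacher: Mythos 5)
Your proposal is correct and follows essentially the same route as the paper: both reduce the matter to observing that subsets of $T$ and multi-valued isomorphisms $X\subseteq S\otimes T$ lie in $V_{\lambda+2}$, so the well-foundedness clause and the existential witness are legitimate quantifiers over the structure, with the remaining clauses handled by the $\Delta_0$ lemmas for flat tuples. The paper's own proof is simply terser, writing out only the well-foundedness formula explicitly.
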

\begin{proof}
    Let us recall that the statement `$T$ is a tree over $V_{\lambda+1}$' is a first-order statement over $V_{\lambda+2}$ with parameter $V_{\lambda+1}$. Furthermore, the well-foundedness of a tree $T$ over $V_{\lambda+1}$ is first-order expressible over $V_{\lambda+2}$ since the well-foundedness of a tree $T$ over $V_{\lambda+1}$ is equivalent to
    \begin{equation*}
       \forall X\in V_{\lambda+2}[ X\subseteq T \land X\neq 0\to \exists \sigma\in X \forall u\in V_{\lambda+1} (\sigma^\frown \lceil u\rceil\notin X)].
    \end{equation*}
    
    It shows the first-order expressibility of being a suitable tree over $V_{\lambda+1}$ in the structure $V_{\lambda+2}$. The first-order expressibility of the remaining two in $V_{\lambda+2}$ is clear since if there is a multi-valued isomorphism $X \subseteq S\otimes T$ between two trees $S$ and $T$ over $V_{\lambda+1}$, then $X\in V_{\lambda+2}$.
\end{proof}

Let us give an example of constructing a new suitable tree from an old one, whose construction plays a critical role in the next lemma:
\begin{example} \label{Example: Suitable tree for TC}
    Let $T$ be a suitable tree over $V_{\lambda+1}$, and consider the following tree:
    \begin{equation*}
        T' = \{\lceil \cdot \rceil\}\cup \{\lceil \sigma\rceil^\frown \tau\mid \sigma^\frown\tau\in T\}.
    \end{equation*}
    We claim that $\Tcoll(T')$ is the transitive closure of $\Tcoll(T)$. First, we have
    \begin{align*}
        \Tcoll(T') & = \{\Tcoll(T')\downarrow\lceil \sigma\rceil :\sigma\in T\} \\ &= \{\Tcoll(T\downarrow\sigma)\mid \sigma\in T\}.
    \end{align*}
    It is clear that $\Tcoll(T)\subseteq \Tcoll(T')$. Also, if $x\in \Tcoll(T')$, then $x=\Tcoll(T\downarrow\sigma)$ for some $\sigma\in T$, and if $y\in x$, then $y=\Tcoll(T\downarrow\sigma^\frown\lceil a\rceil)$ for some $a\in \suc_T(\sigma)$, so $y\in \Tcoll(T')$. This shows that $\Tcoll(T')$ is transitive.

    To see $\Tcoll(T')$ is a transitive closure of $\Tcoll(T)$, we show that $\Tcoll(T')$ is contained in every transitive set containing $\Tcoll(T)$: Suppose that $M$ is a transitive set such that $M\supseteq \Tcoll(T)$. Then by the induction of the length of $\sigma$, we can prove $\Tcoll(T\downarrow\sigma)\in M$ for every $\sigma\in T$.
\end{example}

Now we can see that every suitable tree over $V_{\lambda+1}$ codes a set in $H(V_{\lambda+2})$:
\begin{lemma}
    Let $T$ be a suitable tree over $V_{\lambda+1}$. Then $\Tcoll(T)\in H(V_{\lambda+2})$
\end{lemma}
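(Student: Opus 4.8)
The plan is to show that $\Tcoll(T)$ is a surjective image of $V_{\lambda+1}$, together with a bit more, so that $\Tcoll(T)$ lands in $H(V_{\lambda+2})$. Recall that $H(V_{\lambda+2})$ is the union of all transitive sets that are surjective images of some $S\in V_{\lambda+2}$. So it suffices to exhibit a transitive set $M$ with $\Tcoll(T)\in M$ such that $M$ is a surjective image of a member of $V_{\lambda+2}$; the natural candidate is $M=\trcl(\{\Tcoll(T)\})$.

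First I would pass to the tree $T'$ built in \autoref{Example: Suitable tree for TC}, for which $\Tcoll(T')=\trcl(\Tcoll(T))$. Actually, to capture $\trcl(\{\Tcoll(T)\})$ rather than $\trcl(\Tcoll(T))$, I would instead take the tree $T^* = \{\lceil\cdot\rceil\}\cup\{\lceil\sigma\rceil{}^\frown\tau \mid \sigma{}^\frown\tau\in T\}\cup\{\lceil\lceil\cdot\rceil\rceil{}^\frown\tau \mid \tau\in T\}$, or more simply observe that $\trcl(\{\Tcoll(T)\}) = \Tcoll(T')\cup\{\Tcoll(T)\}$ and that both pieces are handled the same way; either way the point is that $\Tcoll(T')=\{\Tcoll(T\downarrow\sigma)\mid\sigma\in T\}$ is transitive and contains all the elements appearing in $\Tcoll(T)$. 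The key step is then to define a surjection from a member of $V_{\lambda+2}$ onto this transitive set. The natural map is $\sigma\mapsto\Tcoll(T\downarrow\sigma)$, whose domain is $T\subseteq \Fin(V_{\lambda+1})$. Since $T$ is a suitable tree over $V_{\lambda+1}$, every node of $T$ is a flat finite tuple of elements of $V_{\lambda+1}$, and by the rank bounds for flat tuples (\autoref{Lemma: flat pairing properties} and its finite-tuple analogue) such tuples have rank below $\lambda+1$, so $T\in V_{\lambda+2}$.

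Thus I would argue: the function $g\colon T\to \trcl(\{\Tcoll(T)\})$ given by $g(\sigma)=\Tcoll(T\downarrow\sigma)$ (with $g(\lceil\cdot\rceil)=\Tcoll(T)$) is well-defined by recursion on the well-founded tree $T$, and it is surjective onto $\trcl(\{\Tcoll(T)\})$ by the transitivity computation of \autoref{Example: Suitable tree for TC}: every element of the transitive closure is of the form $\Tcoll(T\downarrow\sigma)$ for some $\sigma\in T$. Since $T\in V_{\lambda+2}$ and $\trcl(\{\Tcoll(T)\})$ is a transitive set that is a surjective image of $T$, the definition of $H(X)$ gives $\Tcoll(T)\in\trcl(\{\Tcoll(T)\})\subseteq H(V_{\lambda+2})$.

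The step I expect to be slightly delicate is verifying cleanly that $T\in V_{\lambda+2}$, i.e. that all nodes genuinely have rank $<\lambda+1$: this rests on the flatness of the tuples (without which the rank would climb above $\lambda$) and on $T$ being a set of such tuples, hence a subset of $\Fin(V_{\lambda+1})\in V_{\lambda+2}$. Everything else is a routine recursion on the well-founded relation on $T$, so the only real content is matching the decoding map to the transitive-closure structure already isolated in \autoref{Example: Suitable tree for TC}.
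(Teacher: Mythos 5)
Your proof is correct and follows essentially the same route as the paper: the paper likewise uses the decoding map $\sigma\mapsto\Tcoll(T\downarrow\sigma)$ on $T\in V_{\lambda+2}$, invokes \autoref{Example: Suitable tree for TC} to identify its image with the relevant transitive closure, and then handles the passage from $\trcl(\Tcoll(T))$ to $\Tcoll(T)$ itself by adjoining a top node to the tree, just as you indicate. The only difference is cosmetic --- you fold the top-element adjustment into the statement that the image is $\trcl(\{\Tcoll(T)\})$, whereas the paper performs it as a separate second application of the same argument.
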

\begin{proof}
    Consider the map $f$ of domain $T$ given by $f(\sigma)=\Tcoll(T\downarrow\sigma)$, and let $T'$ be a new suitable tree over $V_{\lambda+1}$ defined in \autoref{Example: Suitable tree for TC}. Since
    \begin{equation*}
        \Tcoll(T') = \{\Tcoll(T\downarrow\sigma)\mid \sigma\in T\} = \trcl(\Tcoll(T)),
    \end{equation*}
    $f$ is an onto map from $T\in V_{\lambda+2}$ to $\trcl(\Tcoll(T))$. That is, $\trcl(\Tcoll(T))$ is an image of a member of $V_{\lambda+2}$, so $\trcl(\Tcoll(T))\subseteq H(V_{\lambda+2})$. We then get the desired result by considering a new tree
    \begin{equation*}
        T'' = \{\lceil \cdot \rceil\} \cup \{\lceil 0\rceil^\frown \sigma\mid\sigma\in T\}
    \end{equation*}
    obtained from $T$ by adding an extra top element to it.
\end{proof}

Conversely, every set in $H(V_{\lambda+2})$ is coded by a suitable tree over $V_{\lambda+1}$:
\begin{lemma} \label{Lemma: Every element of H is coded by a suitable tree}
    Assume that the Collection Principle for $V_{\lambda+1}$ holds. If $a\in H(V_{\lambda+2})$, then there is a suitable tree $T\in V_{\lambda+2}$ over $V_{\lambda+1}$ such that $\Tcoll(T)=a$.
\end{lemma}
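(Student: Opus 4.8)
The plan is to code $a$ by a \emph{canonical tree} and then check that this tree actually belongs to $V_{\lambda+2}$. The case $a=\varnothing$ is disposed of by the one-node tree $\{\lceil\cdot\rceil\}$, which is suitable, lies in $V_{\lambda+2}$, and satisfies $\Tcoll(\{\lceil\cdot\rceil\})=\varnothing$; so assume $a\neq\varnothing$. Since $a\in H(V_{\lambda+2})$, unpacking the definition of $H(V_{\lambda+2})$ yields a transitive set $M$ with $a\in M$ together with a surjection $h\colon S\to M$ for some $S\in V_{\lambda+2}$, and as $V_{\lambda+2}=\calP(V_{\lambda+1})$ we have $S\subseteq V_{\lambda+1}$. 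Transitivity of $M$ gives $\trcl(\{a\})\subseteq M$, so setting $X=h^{-1}[\trcl(\{a\})]$ and $f=h\restricts X$ produces an onto map $f\colon X\to\trcl(\{a\})$ with $X\subseteq V_{\lambda+1}$. Both $X$ and $f$ are sets by Separation, even though $h$ itself will in general have rank well above $\lambda+2$ and need not belong to $V_{\lambda+2}$.

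With $f$ in hand I would take $T=C_{a,f}$. That $T$ is a suitable tree and that $\Tcoll(T)=a$ are exactly \autoref{Lemma: Canonical tree codes the set}, so the only remaining point is $T\in V_{\lambda+2}$. For this, observe that every node of $C_{a,f}$ is a flat finite tuple $\lceil u_0,\dots,u_{n-1}\rceil$ with each $u_i\in X\subseteq V_{\lambda+1}$; by the rank clause for flat finite tuples, such a tuple has rank either finite or at most $\max_i\rank u_i\le\lambda$, hence lies in $V_{\lambda+1}$. Thus $C_{a,f}\subseteq V_{\lambda+1}$, and since $C_{a,f}$ is carved out of $\Fin(V_{\lambda+1})$ by Separation from the parameters $a$ and $f$, it is in particular a \emph{set}; being a subset of $V_{\lambda+1}$, it lies in $\calP(V_{\lambda+1})=V_{\lambda+2}$.

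I expect the genuinely delicate point to be the interface between the ``large'' witness $h$ and the second-order structure $V_{\lambda+2}$, and this is where the Collection Principle earns its keep. The route above simply borrows $h$ from the ambient $\ZF$ model; if one instead insists on building the code purely inside $V_{\lambda+2}$ --- assembling, by recursion on $\in\restricts\trcl(\{a\})$, a single member of $V_{\lambda+2}$ out of trees coding the elements of $a$ without ever naming the high-rank map $h$ --- then at each node one confronts a relation pairing members of $\trcl(\{a\})$ with codes in $V_{\lambda+2}$, and it is the Collection Principle for $V_{\lambda+1}$ that supplies a single set of codes whose range is a surjective image of $V_{\lambda+1}$, allowing the recursion to proceed. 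The main obstacle is therefore organizational rather than computational: guaranteeing that the codes chosen at different nodes cohere into one well-founded tree inside $V_{\lambda+2}$, with the rank estimate of the previous paragraph being the routine part.
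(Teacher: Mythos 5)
Your proof is correct, but it takes a genuinely different route from the paper's --- and a more economical one, since you never invoke the Collection Principle. You pull a surjection $f\colon X\to\trcl(\{a\})$ with $X\subseteq V_{\lambda+1}$ straight out of the definition of $H(V_{\lambda+2})$ (restricting the witnessing surjection $h\colon S\to M$ by Separation, which is legitimate even though $h$ itself has large rank), form the canonical tree $C_{a,f}$, quote \autoref{Lemma: Canonical tree codes the set} for suitability and $\Tcoll(C_{a,f})=a$, and place $C_{a,f}$ in $V_{\lambda+2}$ via the rank bound on flat tuples of elements of $V_{\lambda+1}$; each step is sound. The paper argues instead by $\in$-induction on $a$: it fixes a surjection $V_{\lambda+1}\to a$, applies the Collection Principle for $V_{\lambda+1}$ to gather, for each $b\in a$, some suitable tree coding $b$ into a single subrelation of $V_{\lambda+1}\times V_{\lambda+2}$ with small range, and grafts those trees under a fresh root. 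What your route buys is the observation that the Collection Principle hypothesis is superfluous for this particular lemma (it is genuinely needed elsewhere, e.g.\ for Second-order Collection over $H(V_{\lambda+2})$); what the paper's route buys is a rehearsal of the grafting construction $\{\lceil\cdot\rceil\}\cup\{\lceil x\rceil^\frown\sigma\mid\cdots\}$ that recurs in the proofs of Second-order Collection and of the cofinality of $j$. Your closing paragraph speculating about where Collection would ``earn its keep'' in an internal recursion is, in effect, a description of the proof the paper actually gives.
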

\begin{proof}
    We prove it by $\in$-induction on $a\in H(V_{\lambda+2})$. The case $a=0$ is trivial, so let us assume that $a$ is not empty. Suppose that for each $b\in a$ there is a suitable tree $t\in V_{\lambda+2}$ over $V_{\lambda+1}$ such that $\Tcoll(t)=b$. We cannot `choose' such a tree for each $b\in a$ since we do not have an appropriate form of the axiom of choice. We appeal to the Collection principle to overcome this issue.
    
    Since $a\in H(V_{\lambda+2})$, there is an onto map $f\colon V_{\lambda+1}\to a$. Now consider the relation $R\subseteq V_{\lambda+1}\times V_{\lambda+2}$ given by
    \begin{equation*}
        R = \{\lag x,t\rag \mid \Tcoll(t)=f(x) \}.
    \end{equation*}
    It is clear that $\dom R = V_{\lambda+1}$. By the Collection Principle for $V_{\lambda+1}$, there is a subrelation $S\subseteq R$ such that $\dom R=\dom S=V_{\lambda+1}$ and $\ran S$ is a surjective image of $V_{\lambda+1}$. Now let us construct a new tree $T$ as follows:
    \begin{equation*}
        T = \{\lceil \cdot \rceil\}\cup \{\lceil x\rceil^\frown \sigma\mid u\in V_{\lambda+1} \land \exists t [\lag x,t\rag\in S\land \sigma\in t]\}.
    \end{equation*}
    It remains to see that $\Tcoll(T)=a$, which follows from the following computation:
    \begin{align*}
        \Tcoll(T) &= \{\Tcoll(T\downarrow\lceil x\rceil) \mid x\in\dom S\} \\
        &= \{\Tcoll(t) \mid x\in \dom S \land (x,t)\in S\} = \{f(x) \mid x\in \dom S\} = a.
        \qedhere 
    \end{align*}
\end{proof}

Now we check that $H(V_{\lambda+1})$ is a model of Second-order $\ZF^-$. Verifying all axioms other than Second-order Collection follows from \autoref{Corollary: H(X) models SO Zermelo}:
\begin{proposition}
    For $\alpha>\omega$, $H(V_\alpha)$ models Second-order Zermelo set theory without Powerset and Choice. Also, $V_{\alpha}\subseteq H(V_\alpha)$ for every $\alpha\ge\omega$.
\end{proposition}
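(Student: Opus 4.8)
The plan is to reduce the first claim directly to \autoref{Corollary: H(X) models SO Zermelo} and to prove the inclusion $V_\alpha\subseteq H(V_\alpha)$ by exhibiting, for each $x\in V_\alpha$, a transitive set containing $x$ that is a surjective image of a member of $V_\alpha$.

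For the first claim, I would verify the two hypotheses of \autoref{Corollary: H(X) models SO Zermelo} for $X=V_\alpha$, namely that $1\in V_\alpha$ and that $V_\alpha$ is closed under disjoint union. The former is immediate since $\rank 1 = 1 < \omega < \alpha$. For the latter I would use flat pairing to set
\begin{equation*}
    a\sqcup b = (\{0\}\otimes a)\cup(\{1\}\otimes b),
\end{equation*}
with $\iota^0_{a,b}(x)=\lceil 0,x\rceil$ and $\iota^1_{a,b}(y)=\lceil 1,y\rceil$. Closure within $V_\alpha$ follows from \autoref{Lemma: Flat product closure} (using $\{0\},\{1\}\in V_\alpha$) together with closure of $V_\alpha$ under binary unions, while injectivity of the two maps and disjointness of their images follow from the injectivity of flat pairing in \autoref{Lemma: flat pairing properties}; distinctness of the first coordinates $0\neq 1$ guarantees the two images are disjoint, and their union is $a\sqcup b$ by definition. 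Applying \autoref{Corollary: H(X) models SO Zermelo} then yields that $H(V_\alpha)$ models Second-order Zermelo set theory.

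For the inclusion $V_\alpha\subseteq H(V_\alpha)$, I would fix $x\in V_\alpha$ and show that $\trcl(\{x\})$, which is transitive and contains $x$, is a surjective image of a member of $V_\alpha$. When $\alpha$ is a limit (including $\alpha=\omega$), $x\in V_\beta$ for some $\beta<\alpha$, so $\trcl(\{x\})\in V_{\beta+1}\subseteq V_\alpha$ and the identity map already witnesses the claim. The delicate case is $\alpha=\beta+1>\omega$, where $\trcl(\{x\})$ can have rank exactly $\alpha$ and hence need not lie in $V_\alpha$. Here I would first note that every element of $\trcl(x)$ has rank strictly below $\rank(x)\le\beta$, so $\trcl(x)\in V_\alpha$, and then build the surjection by adjoining a single fresh index: set
\begin{equation*}
    S = (\{0\}\otimes\trcl(x))\cup\{\lceil 1,0\rceil\},
\end{equation*}
which lies in $V_\alpha$ by \autoref{Lemma: Flat product closure} and union-closure, and define $h\colon S\to\trcl(\{x\})$ by $h(\lceil 0,z\rceil)=z$ and $h(\lceil 1,0\rceil)=x$. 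Then $h$ is onto $\trcl(x)\cup\{x\}=\trcl(\{x\})$, so $x\in\trcl(\{x\})\subseteq H(V_\alpha)$.

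The main obstacle is precisely this successor case: the naive transitive witness $\trcl(\{x\})$ falls out of $V_\alpha$ when $\rank(x)=\beta$, and one cannot simply surject $\trcl(x)$ onto $\trcl(\{x\})$ because of the extra element $x$ (this already fails for finite transitive closures). The flat coding lets me enlarge the index set by a single point while staying inside $V_\alpha$, resolving the difficulty uniformly. Note that the strict inequality $\alpha>\omega$ is exactly what is needed to invoke the flat-product machinery of \autoref{Lemma: Flat product closure}; the boundary case $\alpha=\omega$ is absorbed into the limit case, where transitive closures of members remain hereditarily finite.
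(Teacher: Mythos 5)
Your proposal is correct and follows essentially the same route as the paper: both parts reduce the Zermelo claim to \autoref{Corollary: H(X) models SO Zermelo} via the same flat-pair disjoint union, and both handle the inclusion by splitting into the limit case (trivial) and the successor case, where a transitive set containing $x$ is surjected onto from a member of $V_\alpha$ using the indices $\lceil 0,\cdot\rceil$ and $\lceil 1,0\rceil$. The only (immaterial) difference is your choice of transitive witness $\trcl(\{x\})$ with domain $(\{0\}\otimes\trcl(x))\cup\{\lceil 1,0\rceil\}$, where the paper uses $V_\gamma\cup\{A\}$ with domain $V_\gamma$.
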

\begin{proof}
    It suffices to show by \autoref{Corollary: H(X) models SO Zermelo} that $V_\alpha$ is closed under disjoint union. For $a,b\in V_\alpha$, define
    \begin{equation*}
        a \sqcup b := \{\lceil 0,x\rceil \mid x\in a\} \cup \{\lceil 1,y\rceil \mid y\in b\}.
    \end{equation*}
    Then $a\sqcup b\in V_\alpha$, and the map $\iota^i_{a,b}(x) = \lceil i,x\rceil$ witnesses $\sqcup$ is a disjoint union.
    For the remaining part, if $\alpha$ is limit, then $V_\beta \in V_\alpha$ for every $\beta<\alpha$ implies $V_\alpha\subseteq H(V_\alpha)$.
    If $\alpha=\gamma+1$, then $V_\gamma\subseteq H(V_\alpha)$. For each $A\in V_\alpha$, $V_\gamma\cup \{A\}$ is a transitive set, and the function $f\colon V_\gamma \to V_\gamma \cup \{A\}$ defined by
    \begin{equation*}
        f(x) = 
        \begin{cases}
            y & \text{if } x= \lceil 0,y\rceil \text{ for some $y\in V_\gamma$,}\\
            A & \text{if } x = \lceil 1,0\rceil, \\
            0 & \text{otherwise}
        \end{cases}
    \end{equation*}
    is a surjection. This shows $A\in H(V_\alpha)$ for every $A\subseteq V_\gamma$, so $V_\alpha\subseteq H(V_\alpha)$.
\end{proof}

$H(V_\alpha)$ does not satisfy the Second-order Collection in general. 
However, the Collection Principle for $V_{\lambda+1}$ allows us to have Collection over $H(V_{\lambda+2})$:
\begin{proposition}
    If the Collection Principle for $V_{\lambda+1}$ holds, then $H(V_{\lambda+2})$ satisfies the second-order Collection.
\end{proposition}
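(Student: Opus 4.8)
The plan is to read \emph{second-order Collection} as the Collection scheme for an arbitrary class relation $R$, that is, a subset of $H(V_{\lambda+2})\times H(V_{\lambda+2})$ that is a genuine set in the ambient Schlutzenberg model: given $a\in H(V_{\lambda+2})$ with $\forall x\in a\,\exists y\,\lag x,y\rag\in R$, I must produce $b\in H(V_{\lambda+2})$ with $\forall x\in a\,\exists y\in b\,\lag x,y\rag\in R$. The case $a=0$ is handled by $b=0$, so assume $a\neq 0$ and fix a surjection $f\colon V_{\lambda+1}\to a$, which exists because $a\in H(V_{\lambda+2})$.

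The key idea is to transport the problem down to a relation on $V_{\lambda+1}\times V_{\lambda+2}$, where the Collection Principle bites, by replacing witnesses with their tree codes. By \autoref{Lemma: Every element of H is coded by a suitable tree}, every witness $y$ with $\lag x,y\rag\in R$ is of the form $\Tcoll(T)$ for some suitable tree $T\in V_{\lambda+2}$ over $V_{\lambda+1}$, so I define
\[
    R' = \{\lag u,T\rag : \text{$T$ is a suitable tree over $V_{\lambda+1}$ and } \lag f(u),\Tcoll(T)\rag\in R\}.
\]
By \autoref{Lemma: Definability of notions for suitable trees} suitability is definable over $V_{\lambda+2}$, $\Tcoll$ is definable, and $R$ is a set, so $R'\subseteq V_{\lambda+1}\times V_{\lambda+2}$ is a set by Separation in the ambient model; moreover $\dom R'=V_{\lambda+1}$, since each $f(u)\in a$ has a witness $y=\Tcoll(T)$.

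Now apply the Collection Principle for $V_{\lambda+1}$ to $R'$ to obtain a subrelation $S'\subseteq R'$ with $\dom S'=V_{\lambda+1}$ and $\ran S'$ a surjective image of $V_{\lambda+1}$, and fix an onto map $g\colon V_{\lambda+1}\to\ran S'$. The force of this step is that it collapses the proper-class-many choices of witnessing trees into a single $V_{\lambda+1}$-indexed family \emph{without appealing to choice}. I then amalgamate this family into one tree exactly as in \autoref{Example: Suitable tree for TC} and the proof of \autoref{Lemma: Every element of H is coded by a suitable tree}, setting
\[
    T^{*} = \{\lceil \cdot \rceil\}\cup\{\lceil u\rceil^\frown \sigma : u\in V_{\lambda+1}\land \sigma\in g(u)\}.
\]
A rank computation gives $T^{*}\subseteq V_{\lambda+1}$, hence $T^{*}\in V_{\lambda+2}$; the tree $T^{*}$ is well-founded because each $g(u)$ is and the only added node is the common root; and the recursion defining $\Tcoll$ yields $\Tcoll(T^{*})=\{\Tcoll(g(u)):u\in V_{\lambda+1}\}=\{\Tcoll(T):T\in\ran S'\}=:b$. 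By the lemma that the transitive collapse of a suitable tree lies in $H(V_{\lambda+2})$, we get $b\in H(V_{\lambda+2})$.

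It remains to check that $b$ witnesses Collection. Given $x\in a$, write $x=f(u)$ with $u\in V_{\lambda+1}$; since $\dom S'=V_{\lambda+1}$ there is $T$ with $\lag u,T\rag\in S'\subseteq R'$, so $\lag f(u),\Tcoll(T)\rag\in R$ and $\Tcoll(T)\in b$, and thus $y=\Tcoll(T)$ is the desired member of $b$. I expect the main obstacle to be the part built around the Collection Principle and the amalgamation: in the absence of choice one cannot simply pick a witnessing tree for each $x$, which is precisely why the detour through $R'$ and the Collection Principle is needed, and one must verify carefully that the amalgamated $T^{*}$ is a genuinely \emph{suitable} tree sitting inside $V_{\lambda+2}$ (the well-foundedness and the rank bound) and that its collapse is exactly $b$ rather than something larger. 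The rest is routine bookkeeping with the coding machinery already established.
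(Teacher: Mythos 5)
Your proposal is correct and follows essentially the same route as the paper: replace witnesses by suitable tree codes, apply the Collection Principle to the resulting relation into $V_{\lambda+2}$ to get a set-sized subrelation, amalgamate the collected trees into a single suitable tree below a new root, and take its collapse as the collecting set $b$. The only (harmless, arguably cleaner) difference is that you first pull the relation back along a surjection $f\colon V_{\lambda+1}\to a$ so that its domain is literally $V_{\lambda+1}$, whereas the paper applies the principle directly to a relation with domain $a$.
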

\begin{proof}
    Fix $a\in H(V_{\lambda+2})$ and let $R\subseteq a\times H(V_{\lambda+2})$ be a binary relation such that $\dom R = a$.
    Define
    \begin{equation*}
        \tilde{R} = \{\lag x,t \rag \in a\times V_{\lambda+2}\mid \text{$t$ is a suitable tree over $V_{\lambda+1}$ and } \lag x,\Tcoll(t)\rag\in R\}.
    \end{equation*}
    Then $\tilde{R}\subseteq a\times V_{\lambda+2}$ is a binary relation with domain $a$. Hence by the Collection principle for $V_{\lambda+1}$, we have a subrelation $\tilde{S}\subseteq \tilde{R}$ with a surjection $f\colon V_{\lambda+1}\to \tilde{S}$. Now take
    \begin{equation*}
        b = \{\Tcoll(t) \mid t\in \ran \tilde{S}\}.
    \end{equation*}
    Clearly we have $\forall x\in a\exists y\in b (\lag a,b\rag\in R)$. To see $b\in H(V_{\lambda+2})$, consider the following suitable tree coding $b$:
    \begin{equation*}
        \tilde{t} = \{\lceil \cdot \rceil\}\cup \{\lceil x\rceil^\frown \sigma \mid \lag x,t\rag\in\tilde{S} \land \sigma\in t\}.
    \end{equation*}
    It is easy to see that $\tilde{t}$ is a suitable tree over $V_{\lambda+1}$ and $\Tcoll(\tilde{t}) = b$, so $b\in H(V_{\lambda+2})$.
\end{proof}

\begin{definition}
    Let $k\colon V_{\lambda+2}\to V_{\lambda+2}$ be an elementary embedding. We define $j\colon H(V_{\lambda+2})\to H(V_{\lambda+2})$ as follows:
    For a suitable tree $T$ over $V_{\lambda+1}$ such that $a=\Tcoll(T)$, define $j(a) := \Tcoll(k(T))$.
\end{definition}
We need to ensure the above definition works, but it is almost immediate from the elementarity of $k$: Suppose that $a=\Tcoll(S)=\Tcoll(T)$ for suitable trees over $V_{\lambda+1}$. Then by \autoref{Lemma: Suitable tree coding for relations}, $S=^* T$. Since $=^*$ is definable over $V_{\lambda+2}$ by \autoref{Lemma: Definability of notions for suitable trees}, we have $k(S)=^* k(T)$. Again by \autoref{Lemma: Suitable tree coding for relations}, we get $\Tcoll(k(S))=\Tcoll(k(T))$.

\begin{proposition}
   $j\colon H(V_{\lambda+2})\to H(V_{\lambda+2})$ is an elementary embedding.
\end{proposition}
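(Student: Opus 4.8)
The plan is to realize $(H(V_{\lambda+2}),\in)$ as a structure \emph{interpreted} in $(V_{\lambda+2},\in)$ through suitable trees, and then push the elementarity of $k$ through this interpretation. Concretely, to each first-order $\{\in\}$-formula $\phi(x_0,\dots,x_{n-1})$ I associate a translation $\phi^*(T_0,\dots,T_{n-1})$ obtained by replacing each atomic subformula $x_i=x_j$ by $T_i=^* T_j$, each $x_i\in x_j$ by $T_i\in^* T_j$, leaving the Boolean connectives unchanged, and relativizing every quantifier to suitable trees over $V_{\lambda+1}$ (so $\exists x\,\psi$ becomes $\exists T\,[\text{$T$ is a suitable tree over $V_{\lambda+1}$}\land\psi^*]$, and dually for $\forall$). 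By \autoref{Lemma: Definability of notions for suitable trees}, the predicates ``$T$ is a suitable tree over $V_{\lambda+1}$'', $=^*$, and $\in^*$ are all definable over $V_{\lambda+2}$ with the single parameter $V_{\lambda+1}$, so each $\phi^*$ is a genuine first-order formula over $V_{\lambda+2}$ with parameter $V_{\lambda+1}$.

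The heart of the argument is the following interpretation lemma, proved by induction on the complexity of $\phi$: for all suitable trees $T_0,\dots,T_{n-1}\in V_{\lambda+2}$ over $V_{\lambda+1}$, writing $a_i=\Tcoll(T_i)$,
\[
    H(V_{\lambda+2})\models\phi(a_0,\dots,a_{n-1})\iff V_{\lambda+2}\models\phi^*(T_0,\dots,T_{n-1}).
\]
The atomic cases are exactly \autoref{Lemma: Suitable tree coding for relations}, and the Boolean cases are immediate. For the quantifier case I use the two-way correspondence between suitable trees in $V_{\lambda+2}$ and elements of $H(V_{\lambda+2})$: on one hand $\Tcoll(T)\in H(V_{\lambda+2})$ for every suitable tree $T\in V_{\lambda+2}$ (by the lemma preceding \autoref{Lemma: Every element of H is coded by a suitable tree}), and on the other hand, assuming the Collection Principle for $V_{\lambda+1}$, \autoref{Lemma: Every element of H is coded by a suitable tree} guarantees that every $b\in H(V_{\lambda+2})$ equals $\Tcoll(T)$ for some suitable tree $T\in V_{\lambda+2}$. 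Hence a witness $b\in H(V_{\lambda+2})$ to $\exists x$ on the left produces a witnessing tree on the right, and conversely, and the inductive hypothesis closes the case.

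With the interpretation lemma in hand, the elementarity of $j$ follows in three moves. First, $V_{\lambda+1}$ is the unique $\subseteq$-maximum element of $V_{\lambda+2}$ (every element of $V_{\lambda+2}$ is a subset of it, and it belongs to $V_{\lambda+2}$), hence definable there, so $k(V_{\lambda+1})=V_{\lambda+1}$ by elementarity of $k$; consequently $\phi^*$ is preserved as a formula-with-parameter under $k$. Second, since ``suitable tree'', $=^*$ and $\in^*$ are definable over $V_{\lambda+2}$, elementarity of $k$ gives $V_{\lambda+2}\models\phi^*(T_0,\dots,T_{n-1})\iff V_{\lambda+2}\models\phi^*(k(T_0),\dots,k(T_{n-1}))$, and $k$ sends suitable trees to suitable trees. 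Third, by the very definition of $j$ the tree $k(T_i)$ codes $j(a_i)$, i.e. $\Tcoll(k(T_i))=j(\Tcoll(T_i))$. Chaining the interpretation lemma (for $\vec a$), elementarity of $k$, and the interpretation lemma again (for $j(\vec a)$, using the trees $k(\vec T)$) yields
\[
    H(V_{\lambda+2})\models\phi(a_0,\dots,a_{n-1})\iff H(V_{\lambda+2})\models\phi(j(a_0),\dots,j(a_{n-1})),
\]
which is precisely the elementarity of $j$. The main obstacle is the quantifier step of the interpretation lemma: it is exactly here that one needs both that every suitable tree in $V_{\lambda+2}$ decodes into $H(V_{\lambda+2})$ and, more delicately, that every element of $H(V_{\lambda+2})$ is decoded by \emph{some tree already lying in $V_{\lambda+2}$}. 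The latter surjectivity is precisely what the Collection Principle for $V_{\lambda+1}$ buys us, so that the quantifier ranges on the two sides genuinely match; without it the translated existential quantifier over $V_{\lambda+2}$ could fail to see a witness that exists in $H(V_{\lambda+2})$.
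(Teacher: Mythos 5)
Your proposal is correct and follows essentially the same route as the paper: the translation $\phi^*$ is exactly the paper's $\phi^{\mathfrak{t}}$, the interpretation lemma is proved by the same induction using \autoref{Lemma: Suitable tree coding for relations} for atomics and \autoref{Lemma: Every element of H is coded by a suitable tree} for the quantifier step, and the conclusion is the same chaining through the elementarity of $k$. Your explicit observation that $k(V_{\lambda+1})=V_{\lambda+1}$ (as the $\subseteq$-maximum of $V_{\lambda+2}$), so the parameter in $\phi^*$ is preserved, is a detail the paper leaves implicit and is a welcome addition.
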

\begin{proof}
    To prove the elementarity of $j$, we first introduce a way to translate a formula over $H(V_{\lambda+2})$ to that over $V_{\lambda+2}$:
    
    \begin{lemma} \label{Lemma: Coding H statements to V statements}
        Let $\phi$ be a formula with $n$ free variables. Then we can find a formula $\phi^\frakt$ satisfying the following:
        Let $t_0,\cdots,t_{n-1}$ be suitable trees over $V_{\lambda+1}$. Then
        \begin{equation*}
            H(V_{\lambda+2})\vDash \phi(\Tcoll(t_0),\cdots,\Tcoll(t_{n-1})) \iff V_{\lambda+2}\vDash \phi^\frakt(t_0,\cdots,t_{n-1}).
        \end{equation*}
    \end{lemma}
    \begin{proof}
        For a given formula $\phi$, let us recursively define $\phi^\frakt$ as follows:
        \begin{enumerate}
            \item $(x=y)^\frakt \equiv (x=^*y)$, $(x\in y)^\frakt \equiv (x\in^* y)$.
            \item If $\circ$ is a logical connective (like $\land$, $\lor$, $\to$), then $(\phi\circ \psi)^\frakt \equiv (\phi^\frakt \circ \psi^\frakt)$.
            \item $(\lnot\phi)^\frakt \equiv \lnot\phi^\frakt$.
            \item If $\sfQ$ is a quantifier, then
            \begin{equation*}
                (\mathsf{Q} x \phi(x))^{\mathfrak{t}} \equiv \mathsf{Q} x  [\text{$x$ is a suitable tree over $V_{\lambda+1}$} \to \phi^{\mathfrak{t}} (x)].
            \end{equation*}
        \end{enumerate}
        We claim that $\phi^\frakt$ satisfies the desired property.
        The atomic case follows from \autoref{Lemma: Suitable tree coding for relations}, and the logical connective cases are trivial.
        For the quantifier case, let us only consider the case $\sfQ = \exists$ since the remaining case is similar. 

        Suppose that we have $H(V_{\lambda+2})\vDash  \exists a \phi(a,\Tcoll(t_0),\cdots,\Tcoll(t_{n-1}))$. By \autoref{Lemma: Every element of H is coded by a suitable tree}, there is a suitable tree $t$ over $V_{\lambda+1}$ such that $\Tcoll(t)=a$. Then apply the inductive hypothesis to get
        \begin{equation*}
            H(V_{\lambda+2})\vDash \phi(\Tcoll(t),\Tcoll(t_0),\cdots,\Tcoll(t_{n-1})) \iff V_{\lambda+2}\vDash\phi^\frakt (t,t_0,\cdots,t_{n-1}).
        \end{equation*}
        Thus we have
        \begin{equation*}
            V_{\lambda+2} \vDash \exists t [\text{$t$ is a suitable tree over $V_{\lambda+1}$}\land \phi^\frakt (t,t_0,\cdots,t_{n-1})],
        \end{equation*}
        which is $V_{\lambda+2}\vDash (\exists y \phi(y,x_0,\cdots,x_{n-1}))^\frakt (t_0,\cdots,t_{n-1})$.
        The converse direction is easy, so we omit it. 
    \end{proof}
    Now let us prove the elementarity of $j$: Suppose that we have $H(V_{\lambda+2})\vDash \phi(a_0,\cdots,a_{n-1})$. By \autoref{Lemma: Every element of H is coded by a suitable tree}, we have suitable trees $t_0,\cdots,t_{n-1}$ over $V_{\lambda+1}$ such that $a_i=\Tcoll(t_i)$ for $i<n$. Hence by \autoref{Lemma: Coding H statements to V statements}, we have $V_{\lambda+2} \vDash \phi^\frakt (t_0,\cdots,t_{n-1})$. By elementarity of $k$, we get $V_{\lambda+2} \vDash \phi^\frakt (k(t_0),\cdots,k(t_{n-1}))$, and the combination of \autoref{Lemma: Coding H statements to V statements} and the definition of $j$ gives $H(V_{\lambda+2})\vDash \phi(j(a_0),\cdots, j(a_{n-1}))$.
\end{proof}

It is immediate that if $k$ is non-trivial, then $j$ is also non-trivial. Also, we have
\begin{theorem}
    $j\colon H(V_{\lambda+2})\to H(V_{\lambda+2})$ is cofinal.
\end{theorem}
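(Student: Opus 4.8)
The plan is to verify cofinality in the first-order sense appropriate to the model $H(V_{\lambda+2})$ of $\ZF^-$: for every $a\in H(V_{\lambda+2})$ I must produce $b\in H(V_{\lambda+2})$ with $a\in j(b)$. The key external input is that $k\colon V_{\lambda+2}\to V_{\lambda+2}$ is cofinal in the Goldberg--Schlutzenberg sense, since $\lambda+2$ is even and $k$ is non-trivial and elementary \cite{GoldbergSchlutzenberg2023}; that is, for every $C\in V_{\lambda+2}$ there is $B\in V_{\lambda+2}$ with $C\in(k(B):\Dom k(B))$, i.e. $C=(k(B))_{i}$ for some $i\in\Dom k(B)$.

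First I would fix $a$ and, using \autoref{Lemma: Every element of H is coded by a suitable tree}, choose a suitable tree $T$ over $V_{\lambda+1}$ with $\Tcoll(T)=a$. Applying cofinality of $k$ to $C=T$ yields $B\in V_{\lambda+2}$ and an index $i_0\in\Dom k(B)$ with $(k(B))_{i_0}=T$. The idea is then to let $b$ collect the collapses of exactly those sections of $B$ that happen to be suitable trees, so that applying $k$ and collapsing recovers, among its members, the collapse of $T=(k(B))_{i_0}$, namely $a$.

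Concretely I would form the tree
\begin{equation*}
    T_b = \{\lceil \cdot \rceil\} \cup \{\lceil i\rceil^\frown\sigma \mid i\in\Dom B,\ (B)_i \text{ is a suitable tree over } V_{\lambda+1},\ \sigma\in (B)_i\},
\end{equation*}
which hangs each suitable section $(B)_i$ below a fresh immediate successor $\lceil i\rceil$ of the root. One checks, as in \autoref{Example: Suitable tree for TC}, that $T_b$ is a suitable tree over $V_{\lambda+1}$ (well-foundedness descending to that of the individual $(B)_i$), that $T_b\in V_{\lambda+2}$, and that $T_b\downarrow\lceil i\rceil=(B)_i$, whence $\Tcoll(T_b)=\{\Tcoll((B)_i)\mid i\in\Dom B,\ (B)_i\text{ suitable}\}$. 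Set $b=\Tcoll(T_b)\in H(V_{\lambda+2})$.

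The crucial step is elementarity. The map $B\mapsto T_b$ is definable over $V_{\lambda+2}$ from the parameter $V_{\lambda+1}$, since $\Dom$, the sections $(\cdot)_i$, concatenation, and the predicate ``is a suitable tree over $V_{\lambda+1}$'' are all definable there (\autoref{Lemma: Definability of notions for suitable trees}), and $k(V_{\lambda+1})=V_{\lambda+1}$ because $V_{\lambda+1}$ is the $\subseteq$-largest transitive element of $V_{\lambda+2}$ and hence definable without parameters. Therefore $k(T_b)$ is obtained from $k(B)$ by the very same recipe, so $\Tcoll(k(T_b))=\{\Tcoll((k(B))_i)\mid i\in\Dom k(B),\ (k(B))_i\text{ suitable}\}$. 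Since $(k(B))_{i_0}=T$ is a suitable tree with $\Tcoll(T)=a$, we conclude $a\in\Tcoll(k(T_b))=j(b)$, as desired; alternatively one may package this last computation through \autoref{Lemma: Suitable tree coding for relations} via $T\in^* k(T_b)$. I expect the main obstacle to be precisely the bookkeeping that turns $B\mapsto T_b$ into a genuine definable operation to which elementarity of $k$ applies (including checking that $(k(B))_{i_0}$ is recognized as suitable and that the collapse of $T_b$ is computed correctly); once that is in place, the argument closes immediately.
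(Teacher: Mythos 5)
Your proposal is correct and follows essentially the same route as the paper: apply cofinality of $k$ to the coding tree $T$ to get $B$ with $T=(k(B))_{i_0}$, filter $B$ to its suitable-tree sections, hang them below fresh successors of a root to form a single suitable tree $t'=T_b$, and use definability of the construction (via \autoref{Lemma: Definability of notions for suitable trees}) to conclude $T\in^* k(T_b)$, hence $a\in j(b)$. The only cosmetic difference is that the paper performs the filtering on the flat relation $c_0$ itself rather than section-by-section, which is the same construction.
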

\begin{proof}
    Let $a\in H(V_{\lambda+2})$ and $t$ be a suitable tree over $V_{\lambda+1}$ such that $a=\Tcoll(t)$. We want to find $b\in H(V_{\lambda+2})$ such that $a\in j(b)$. To prove this, it suffices to find a suitable tree $t'$ over $V_{\lambda+1}$ such that $t\in^* k(t')$.

    Recall that $k\colon V_{\lambda+2}\to V_{\lambda+2}$ is cofinal, so we have a set $c_0\in V_{\lambda+2}$ such that $t\in (k(c_0):\Dom k(c_0))$. Now let
    \begin{equation*}
        c = \{ \lceil x,\sigma\rceil\in c_0\mid \text{$(c_0)_x$ is a suitable tree over $V_{\lambda+1}$}\} \in V_{\lambda+2}.
    \end{equation*}
    Then define $t'$ as follows:
    \begin{equation*}
        t' = \{\lceil \cdot \rceil\}\cup \{\lceil x\rceil^\frown \sigma\mid  \lceil x,\sigma\rceil\in c\}.
    \end{equation*}
    That is, $t'$ is obtained from $c$ by joining all suitable trees of the form $(c)_x$ for $x\in \Dom c \subseteq \Dom c_0$. Hence $t'$ is a suitable tree over $V_{\lambda+1}$.
    
    Now we claim that $t\in^* k(t')$.
    We can easily see that $t\in (k(c):\Dom k(c))$, so there is $x\in \Dom k(c)$ such that $t=(k(c))_x$. From this, we have
    \begin{equation*}
        k(t')\downarrow \lceil x\rceil = \{\sigma\mid \lceil x,\sigma\rceil\in k(c)\} = (k(c))_x = t.
    \end{equation*}
    Hence we get $t\in^* k(t')$.
\end{proof}

In sum, we get the following:
\begin{corollary}
    Working over the Schlutzenberg's model, if $\lambda$ is a Kunen cardinal, then $H(V_{\lambda+2})$ is a model of $\ZF^-_j + \DC_\lambda$ with a non-trivial cofinal elementary embedding $j\colon V\to V$. Furthermore, $V_{\lambda+1}\in H(V_{\lambda+2})$.
\end{corollary}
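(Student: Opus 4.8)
The plan is to harvest the Corollary from the work of this section: $j$ has already been shown to be a well-defined, non-trivial, elementary, and cofinal embedding of $H(V_{\lambda+2})$ into itself, and $H(V_{\lambda+2})$ has already been shown to model second-order Zermelo set theory and, under the Collection Principle for $V_{\lambda+1}$, second-order Collection. What genuinely remains is only two things: to check that these closure properties yield the first-order schemata of $\ZF^-_j$ in the \emph{expanded} language $\{\in,j\}$, and to verify $\DC_\lambda$ inside $H(V_{\lambda+2})$. Note that Powerset is legitimately absent: $V_{\lambda+1}\in H(V_{\lambda+2})$ while $\calP(V_{\lambda+1})=V_{\lambda+2}$ is not a surjective image of any member of $V_{\lambda+2}$ (a member of $V_{\lambda+2}$ is a subset of $V_{\lambda+1}$, and such a surjection would contradict Cantor's theorem), so Powerset fails, exactly as $\ZF^-$ permits.

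First I would fix the data. Since $\lambda$ is a Kunen cardinal in Schlutzenberg's model there is a non-trivial elementary $k\colon V_{\lambda+2}\to V_{\lambda+2}$, and the lemma on the Collection Principle guarantees that the Collection Principle for $V_{\lambda+1}$ holds there (this is the assertion that $V_{\lambda+2}$ satisfies the Collection Principle). The induced $j\colon H(V_{\lambda+2})\to H(V_{\lambda+2})$ is then well-defined, elementary, and cofinal by the preceding propositions, and non-trivial because $k$ is. The axioms of $\ZF^-$ other than Collection hold because $H(V_{\lambda+2})$ models second-order Zermelo set theory, Collection holds because the Collection Principle for $V_{\lambda+1}$ is available, and the clause $V_{\lambda+1}\in H(V_{\lambda+2})$ follows from $V_{\lambda+2}\subseteq H(V_{\lambda+2})$ together with $V_{\lambda+1}\in V_{\lambda+2}$.

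The first point requiring actual verification is that Separation and Collection survive the passage to $\{\in,j\}$. The crucial observation is that $H(V_{\lambda+2})$ is a transitive \emph{set} and $j$ is a class function definable over Schlutzenberg's model (as the composite of the tree-coding, $k$, and $\Tcoll$). Hence for any $\{\in,j\}$-formula $\phi$ and parameters $\vec p$ the collection $\{x\in a: H(V_{\lambda+2})\vDash\phi(x,\vec p)\}$ is a genuine subset of $a$ in the ambient model, and likewise $\{\lceil x,y\rceil\in a\times H(V_{\lambda+2}): H(V_{\lambda+2})\vDash\phi(x,y,\vec p)\}$ is a genuine set relation. Feeding these into the second-order Separation (closure under subsets of elements) and the second-order Collection already established for $H(V_{\lambda+2})$ yields Separation and Collection for formulas mentioning $j$. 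Elementarity is demanded only for $\{\in\}$-formulas, so the elementarity proposition suffices there.

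The last and most delicate step is $\DC_\lambda$, and this is where I expect the real bookkeeping to lie. Given a tree $T\in H(V_{\lambda+2})$ on some $X\in H(V_{\lambda+2})$ with no terminal nodes — a property absolute between $H(V_{\lambda+2})$ and the ambient model by transitivity — Schlutzenberg's model satisfies $\DC_\lambda$ and hence produces a branch $f\colon\lambda\to X$ every initial segment of which lies in $T$. The work is to return $f$ to $H(V_{\lambda+2})$, i.e.\ to exhibit $\trcl(\{f\})$ as a surjective image of a member of $V_{\lambda+2}$. Fixing a transitive $M\ni X$ that is a surjective image of some $S\subseteq V_{\lambda+1}$, I would verify that $N=M\cup V_{\lambda+1}\cup\{\lceil\alpha,f(\alpha)\rceil:\alpha<\lambda\}\cup\{f\}$ is transitive and is the range of a surjection from $S\sqcup V_{\lambda+1}\sqcup\lambda\sqcup 1\in V_{\lambda+2}$: the $S$-block covers $M$ and hence every $f(\alpha)$ together with its transitive closure, the $V_{\lambda+1}$-block absorbs the low-rank debris produced by the flat pairing, the $\lambda$-block hits the pairs $\lceil\alpha,f(\alpha)\rceil$, and the final point hits $f$. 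The main obstacle is precisely this verification that the transitive closures of the flat pairs $\lceil\alpha,f(\alpha)\rceil$ introduce nothing beyond $M$ and low-rank material already captured by $V_{\lambda+1}$; once this routine rank computation is carried out, $f\in H(V_{\lambda+2})$ and $H(V_{\lambda+2})\vDash\DC_\lambda$, which completes the Corollary.
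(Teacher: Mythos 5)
Your proposal is correct and follows the same overall plan as the paper: both proofs simply harvest the section's results (second-order Zermelo plus Collection for $H(V_{\lambda+2})$, and the well-definedness, elementarity, non-triviality and cofinality of the induced $j$). The differences are in the two remaining items. For $V_{\lambda+1}\in H(V_{\lambda+2})$ the paper exhibits the canonical tree $C_{V_{\lambda+1},\mathsf{Id}}$, whereas you invoke $V_{\lambda+2}\subseteq H(V_{\lambda+2})$ together with $V_{\lambda+1}\in V_{\lambda+2}$; both are valid and yours is arguably shorter given the earlier proposition. For $\DC_\lambda$ the paper gives a one-line argument via second-order Collection (implicitly: collect the initial segments $f\restricts\alpha$ for $\alpha<\lambda$ into a set $b\in H(V_{\lambda+2})$ and recover $f$ as $\bigcup b$, using closure under unions), while you instead build an explicit transitive set $N\supseteq\trcl(\{f\})$ and a surjection onto it from $S\sqcup V_{\lambda+1}\sqcup\lambda\sqcup 1\in V_{\lambda+2}$. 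Your route is more self-contained and makes visible exactly where the flat-pairing debris goes, at the cost of a rank computation the paper avoids; the paper's route is shorter but leans on the Collection proposition. You also explicitly address why Separation and Collection survive in the expanded language $\{\in,j\}$ (because $j$ is a set relation from the ambient model's point of view, so the second-order schemata absorb it) --- a point the paper leaves implicit --- and your observation that Powerset genuinely fails is a nice sanity check not present in the paper. No gaps.
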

\begin{proof}
    We have proved that $H(V_{\lambda+2})$ is a model of second-order $\ZF^-$ with a non-trivial cofinal Reinhardt embedding. $V_{\lambda+1}\in H(V_{\lambda+2})$ is immediate since the canonical tree $C_{V_{\lambda+1}, \mathsf{Id}}$ with the identity map $\mathsf{Id}\colon V_{\lambda+1}\to V_{\lambda+1}$ is a suitable tree over $V_{\lambda+1}$.
    It remains to see that $H(V_{\lambda+2})$ satisfies $\DC_\lambda$, but it immediately follows from $\DC_\lambda$ over Schlutzenberg's model and the second-order Collection over $H(V_{\lambda+2})$.
\end{proof}

\section{Discussions}
Our model for $\ZF^-_j$ with a cofinal Reinhardt embedding $j\colon V\to V$ thinks $V_{\lambda+1}$ exists, and so we may ask if the existence of $V_{\lambda+1}$ is a consequence of the cofinality of a Reinhardt embedding:
\begin{question}
    Working over $\ZF^-_j$ with a non-trivial Reinhardt embedding $j\colon V\to V$, let $\lambda$ be the supremum of the critical sequence $\lag j^n(\crit j)\mid n<\omega\rag$. Can we prove the existence of $\lambda^+$ or $V_{\lambda+1}$? What happens if we assume $\DC_\lambda$?
\end{question}
It might be possible that a construction in \cite{GitmanMatthews2022} may work, but it requires examining the Gitman-Matthews construction without choice. 
Also, we constructed a model of $\ZF^-_j$ with a cofinal Reinhardt embedding $j\colon V\to V$ from a stronger large cardinal axiom than what Matthews \cite{Matthews2020} used. Thus, we may ask if we could get a model of $\ZF^-_j$ with a cofinal Reinhardt embedding.
More generally, we may ask the consistency strength of $\ZF^-_j$ with a cofinal elementary embedding:
\begin{question}
    What is the consistency strength of $\ZF^-_j$ with a cofinal Reinhardt embedding? For example, does it imply the consistency of $\ZFC + I_1$?
\end{question}
The author conjectures $\ZFC+I_1$ does not prove the consistency of the theory ``$\ZF^-_j$ + $j\colon V\to V$ is cofinal + $\DC_\lambda$ + $V_{\lambda+1}$ exists'';  
The author conjectures Laver's argument in \cite{Laver1997Implications} carries over $\ZF^-_j$ with a cofinal $j\colon V\to V$ with extra assumptions $\DC_\lambda$ and the existence of $V_{\lambda+1}$, where $\lambda = \sup_{n<\omega} j^n(\crit j)$.
In particular, we should be able to imply various large cardinal assertions like the existence of an $I_1$-cardinal below $\lambda$ from the previously mentioned theory.

\printbibliography

\end{document}